\DeclareMathAlphabet{\mathpzc}{OT1}{pzc}{m}{it}
\newcommand{\marginextend}[1]{ \addtolength{\oddsidemargin}{-#1}  \addtolength{\evensidemargin}{-#1}
  \addtolength{\textwidth}{#1}\addtolength{\textwidth}{#1}}
\newcommand{\updownextend}[1]{ \addtolength{\topmargin}{-#1}  \addtolength{\textheight}{#1}
\addtolength{\textheight}{#1}}
\DeclareFontFamily{OT1}{pzc}{}
\DeclareFontShape{OT1}{pzc}{m}{it}{<-> s * [1.10] pzcmi7t}{}
\DeclareMathAlphabet{\mathpzc}{OT1}{pzc}{m}{it}
\DeclareSymbolFont{SY}{U}{psy}{m}{n}
\DeclareMathSymbol{\emptyset}{\mathord}{SY}{'306}
\theoremstyle{plain}
\newtheorem{thm}{Theorem}[section]
\newtheorem*{thm*}{Theorem}
\newtheorem{cor}[thm]{Corollary}
\newtheorem{lem}[thm]{Lemma}
\newtheorem{prop}[thm]{Proposition}
\newtheorem{defn}[thm]{Definition}
\newtheorem{rem}[thm]{Remark}
\newtheoremstyle{named}{}{}{\itshape}{}{\bfseries}{.}{.5em}{#1 \thmnote{#3}}
\theoremstyle{named}
\numberwithin{equation}{section}
\def\C{{\mathbb C}}
\def\norm#1{\left\|{#1}\right\|}
\def\ov{\overline}
\def\m{\mathcal}
\def\mb{\mathbb}
\def\w{\widehat }
\def\beq{\begin{eqnarray}}
\def\eeq{\end{eqnarray}}
\def\beqa{\begin{eqnarray*}}
\def\eeqa{\end{eqnarray*}}
\def\del{\partial}
\def\ov{\overline}
\def\bl{\boldsymbol}
\def\i{\prime}
\def\sgn{{\rm sgn}}
\def\bl{\boldsymbol}
\def\deg{{\rm deg}}
\newcommand{\overbar}[1]{\mkern 1.5mu\overline{\mkern-1.5mu#1\mkern-1.5mu}\mkern 1.5mu}
\newcommand{\be}{\begin{equation}}
\newcommand{\ee}{\end{equation}}
\newcommand{\bea}{\begin{eqnarray}}
\newcommand{\eea}{\end{eqnarray}}
\newcommand{\Bea}{\begin{eqnarray*}}
\newcommand{\Eea}{\end{eqnarray*}}
\newcommand{\inner}[2]{\langle #1,#2 \rangle }%
\newcommand{\innera}[2]{\langle #1,#2 \rangle_\alpha }
\newcounter{cnt1}
\newcounter{cnt2}
\newcounter{cnt3}
\newcommand{\blr}{\begin{list}{$($\roman{cnt1}$)$}
 {\usecounter{cnt1} \setlength{\topsep}{0pt}
 \setlength{\itemsep}{0pt}}}
\newcommand{\bla}{\begin{list}{$($\alph{cnt2}$)$}
 {\usecounter{cnt2} \setlength{\topsep}{0pt}
 \setlength{\itemsep}{0pt}}}
\newcommand{\bln}{\begin{list}{$($\arabic{cnt3}$)$}
 {\usecounter{cnt3} \setlength{\topsep}{0pt}
 \setlength{\itemsep}{0pt}}}
\newcommand{\el}{\end{list}}
\DeclareMathOperator  {\tr} {tr}
\DeclareMathOperator  {\adj}{adj}
\title[Toeplitz operators on the Bergman space]{Toeplitz operators on the weighted Bergman spaces of quotient domains}
\author[Ghosh]{Gargi Ghosh}
\author[Narayanan]{E. K. Narayanan}
\address[Ghosh]{Jagiellonian University, Faculty of Mathematics and Information Technologies, 30-348 Krakow, Poland}
\address[Narayanan]{Indian Institute of Science, Bangalore, 560012, India}
\email[Ghosh]{gargi.ghosh@im.uj.edu.pl}
\email[Narayanan]{naru@iisc.ac.in}
\subjclass[2020]{47B35, 30H20, 47B32} \keywords{Toeplitz operator, Weighted Bergman space, Pseudorelfection group, Quotient domain}
\thanks{The first named author's research is part of the project No. 2022/45/P/ST1/01028 co-funded by the
National Science Centre and the European Union Framework Programme for Research and Innovation Horizon 2020 under the Marie Sklodowska-Curie grant agreement No.945339. The first named author was also partially supported by CV Raman Fellowship and postdocotoral fellowship from Silesian University in Opava under GA CR grant no. 21-27941S. The second named author was supported by the grant MTR/2018/000501 from SERB, India.}
\begin{document}

\maketitle

\begin{abstract}

 Let $G$ be a finite pseudoreflection group and $\Omega\subseteq \mathbb C^d$ be a bounded domain which is a $G$-space. We establish identities involving Toeplitz operators on the weighted Bergman spaces of $\Omega$ and $\Omega/G$ using invariant theory and representation theory of $G.$ This, in turn, provides techniques to study algebraic properties of Toeplitz operators on the weighted Bergman space on $\Omega/G.$ We specialize on the generalized zero-product problem and characterization of commuting pairs of Toeplitz operators.  As a consequence, more intricate results on Toeplitz operators on the weighted Bergman spaces on some specific quotient domains (namely symmetrized polydisc, monomial polyhedron, Rudin's domain) have been obtained.
\end{abstract} 

\section{Introduction}

Let $\Omega$ be a bounded domain in $\mb C^d$ and $\omega : \Omega \to (0,\infty)$ be a continuous function. The weighted Bergman space $\mb A^2_\omega(\Omega)$ is the closed subspace consisting of holomorphic functions in $L^2_\omega(\Omega).$  For $u \in L^\infty(\Omega),$ Toeplitz operator $T_u: \mb A^2_\omega(\Omega) \to \mb A^2_\omega(\Omega)$ is defined by $$(T_u f) = P_\omega(uf),$$ where  $P_\omega : L^2_\omega(\Omega) \to \mb A^2_\omega(\Omega)$ is the orthogonal projection. Suppose that $G$ is a finite group acting on $\Omega.$ Then $\Omega$ is said to be a $G$-invariant domain or a $G$-space. In this article, we study algebraic properties of Toeplitz operators on the weighted Bergman space of $\Omega/G$ for a $G$-invariant domain $\Omega.$ 
The study of Toeplitz operators on function spaces has a long history. Starting from the seminal work of Brown and Halmos \cite{MR160136}, it has attracted a lot of attention.  Brown--Halmos proved various algebraic properties (such as generalized zero-product theorem, characterization of commuting pairs) of Toeplitz operators on the Hardy space $H^2(\mathbb D)$ on the unit disc $\mathbb D.$ Later, Halmos posed the zero-product problem for a finite product of Toeplitz operators and it was solved on $H^2(\mathbb D)$ in full generality in \cite{MR2527321}. Interestingly, even the zero-product problem, in its full generality, is still open for the Bergman space $\mb A^2(\mb D)$. Important result due to Ahern and Cu{c}kovi\'{c} \cite{MR1867348} proved that the answer is affirmative for $\mb A^2(\mb D)$ under additional conditions on the functions. An ingenious proof by Ahern improved this result, by studying the range of the Berezin transform on the Bergman space on $\mathbb D$ \cite{MR2085115}, see also \cite{MR3747963}. There have been several attempts to extend the above results to the higher dimensional situations, in particular, to the Hardy space and Bergman space over the unit ball and the polydisc in $\mathbb C^d,$ see \cite{MR2214579}, \cite{MR2294274}, \cite{MR2262785}, \cite{MR1624149}, \cite{MR1443898} and the references therein. As for the commutativity, Axler and Cu{c}kovi\'{c} gave necessary and sufficient conditions on the bounded harmonic symbols $u$ and $v$ such that Toeplitz operators $T_u$ and $T_v$ on $\mb A^2(\mb D)$ commute \cite{MR1079815}. Since then various attempts have been made to generalize this problem in polydisc and unit ball in $\mb C^d$ \cite{MR1443898, MR1624149, MR2031039}.  The study of algebraic properties of Toeplitz operators on the Hardy spaces on the quotient domains emerged very recently \cite{MR4266152, MR4244837}. However, the
analogous questions in the Bergman spaces of quotient domains turn out to be much more difficult and of worth to study. 

To achieve our goal, we first observe that it is not obvious that $\Omega/G$ is a domain for a $G$-invariant domain $\Omega$. However, it is known that $\Omega/G$ can be given the structure of a complex analytic space which is biholomorphically equivalent to some domain in $\mb C^d$ whenever $G$ is a finite pseudoreflection group \cite{MR807258} \cite[Subsection 3.1.1]{MR4404033} \cite[Proposition 1]{MR2542964}. Henceforth, we confine our attention to $\Omega/G$ where $G$ is a finite pseudoreflection group. Recall that a \emph{pseudoreflection} on $\C^d$ is a linear homomorphism $\sigma: \C^d \rightarrow \C^d$ such that $\sigma$ has finite order in $GL(d,\mb C)$ and the rank of $(I_d - \sigma)$ is 1. A group generated by pseudoreflections is called a pseudoreflection group. For example, any finite cyclic group, the permutation group $\mathfrak{S}_d$ on $d$ symbols, the dihedral groups are finite pseudoreflection groups \cite{MR2542964}. A pseudoreflection group $G$ acts on $\mb C^d$ by (right action) $\sigma \cdot \bl z = \sigma^{-1} \bl z$ for $\sigma \in G$ and $\bl z \in \mb C^d.$ The right action on $\mb C^d$ gives rise to the left action of $G$ on the set of all complex-valued functions on $\mb C^d$ as following: \bea\label{action}\sigma( f)(\bl z) =  f({\sigma}^{-1}\cdot \bl z), \,\, \text{~for~} \sigma \in G \text{~and~} \bl z \in \mb C^d.\eea  If $\Omega$ is a $G$-space under the above action then the quotient $\Omega / G$ is biholomorphically equivalent to the domain $\bl \theta(\Omega),$ where $\bl \theta: \mb C^d \to \mb C^d$ is a basic polynomial map associated to the finite pseudoreflection group $G$ \cite{MR4404033,MR3133729}. Notice that if $\widetilde{\Omega}$ is a domain such that there exists a proper holomorphic map $\bl f : \Omega \to \widetilde{\Omega}$ with $G$ as the group of deck transformations, then $\widetilde{\Omega}$ is biholomorphic to $\Omega/G$ and $\bl \theta$ is a representative of $\bl f,$ that is, $\bl f = \bl \theta \circ \bl h$ for some biholomorphism $\bl h : \bl \theta(\Omega) \to \widetilde{\Omega}$ \cite[Proposition 2.2]{kag}. Therefore, without loss of generality, we work with the domain $\bl \theta(\Omega)$ instead of $\Omega / G.$  It is important to note that this phenomena is not in general true for any group $G.$ Below are a few examples of well-studied quotient domains. 

\begin{itemize}
\item The symmetrized polydisc is a quotient domain biholomorphic to $\mb D^d/\mathfrak{S}_d$ where $\mathfrak{S}_d$ denotes the permutation group on $d$ symbols \cite{MR3043017}. 

\item The tetrablock is biholomorphic to the quotient domain $\m R_{II}/\mathfrak{S}_2$ where $\m R_{II}$ is the classical Cartan domain of second type \cite{MR3133729}. 

\item Rudin's domains are realized as $\mb B_d / G$ ($\mb B_d$ is the unit ball of $\mb C^d$ with respect to $\ell^2$-norm) for a finite pseudoreflection group $G$ \cite{MR667790}.

\item A monomial polyhedron is a quotient domain $\Omega / G$ where $\Omega \subseteq \mb D^d$ and  $G$ is a finite abelian group \cite{bender2020lpregularity}.
\end{itemize} 


The study of Toeplitz operators on the Bergman space largely depends on the geometry and function theory of the domain. In most of the cases,  the quotient domains $\Omega/G$  behave differently from the domain $\Omega$ and the geometry of such domains can be complicated. For example, though both of the polydisc $\mb D^d$ and the classical Cartan domain of second type $\m R_{II}$ are homogeneous, the quotient domains $\mb D^d/\mathfrak{S}_d$ and $\m R_{II}/\mathfrak{S}_2$ are not homogeneous \cite[p. 265]{MR2108554}  \cite[p. 764, Corollary 3.2]{MR2418303}.
On the other hand, characterization of commuting Toeplitz operators on $\mb A^2(\Omega)$ (for $\Omega=\mb D, \mb D^d$ and $\mb B_d)$ depends on the homogeneity of the domains to a great extent \cite{MR1443898,MR2031039,MR961609,MR993443}. We emphasize that our method relies only on invariant theory and representation theory of the group $G,$ which provides us a greater freedom to work with domains $\Omega/G$ without making any appeal to their geometry. 
We now briefly describe the main results of this paper.
\subsection{Generalized zero-product problem}

Let $\omega : \Omega \to (0,\infty)$ be a $G$-invariant continuous function. For each one-dimensional representation $\varrho$ of $G,$ we define the relative invariant subspace of the weighted Bergman space $\mb A_\omega^2(\Omega)$ by
\Bea
R^G_{\varrho}(\mb A_\omega^2(\Omega)) = \{f \in \mb A_\omega^2(\Omega) : \sigma(f)  = \chi_\varrho(\sigma) f ~ {\rm for ~~ all~} \sigma \in G \},
\Eea
where $\chi_\varrho$ denotes the character of $\varrho.$  A characterization of the relative invariant subspace is that every $f \in R^G_{\varrho}(\mb A_\omega^2(\Omega))$ is divisible by some polynomial $\ell_\varrho$ and the quotient is in the ring of $G$-invariant holomorphic functions on $\Omega$ \cite[p. 8, Lemma 2.8]{kag}. An explicit expression for $\ell_\varrho$ has been obtained from the representation $\varrho$ \cite[p. 139, Theorem 3.1]{MR460484} (cf. Lemma \ref{gencz}). For each one-dimensional representation $\varrho$ of $G,$ we set \bea\label{wei}\omega_\varrho(\bl \theta(\bl z)) = \frac{|\ell_\varrho(\bl z)|^2}{|J_{\bl \theta}(\bl z)|^2} \omega(\bl z),\eea where $J_{\bl \theta}$ is the determinant of the complex jacobian of the basic polynomial map $\bl \theta$ associated to the group $G.$ Then $R^G_{\varrho}(\mb A_\omega^2(\Omega))$ is isometrically isomorphic to the weighted Bergman space $\mb A_{\omega_\varrho}^2(\bl \theta(\Omega))$ with weight function $\omega_\varrho$ \cite[p. 2, Theorem 1.1]{kag}. 

We also obtain that for a $G$-invariant function $\widetilde{u}$ in $L^\infty(\Omega),$ there exists $u \in L^\infty(\bl \theta(\Omega))$ such that $\widetilde{u} = u \circ \bl \theta$ (cf. Remark \ref{rem1}). 
 \begin{thm}\label{main1}
 Suppose that $G$ is a finite pseudoreflection group, the bounded domain $\Omega \subseteq \mb C^d$ is a $G$-space and $\bl \theta: \Omega \to \bl \theta(\Omega)$ is a basic polynomial map associated to the group $G.$ Let $\widetilde{u},\widetilde{v}$ and $\widetilde{q}$ be $G$-invariant functions in $L^\infty(\Omega)$ such that $\widetilde{u} = u \circ \bl \theta$, $\widetilde{v} = v \circ \bl \theta$ and $\widetilde{q} = q \circ \bl \theta$. 
 \begin{enumerate}
     \item[{\rm 1.}] Suppose that for a one-dimensional representation $\mu$ of $G,$ $T_uT_v=T_q$ on $\mb A^2_{\omega_\mu}(\bl \theta(\Omega)),$ then \begin{enumerate}
    \item[\rm (i)] $T_uT_v=T_q$ on $\mb A^2_{\omega_\varrho}(\bl \theta(\Omega))$ for every one-dimensional representation $\varrho$ of $G,$ and
    \item[\rm (ii)] $T_{\widetilde{u}}T_{\widetilde{v}}=T_{\widetilde{q}}$ on $\mb A_\omega^2(\Omega),$
\end{enumerate}
where the weight function $\omega : \Omega \to (0,\infty)$ is $G$-invariant and continuous and $\omega_\varrho : \bl \theta(\Omega) \to (0,\infty)$ is as defined in Equation \eqref{wei} for every one-dimensional representation $\varrho$.
\item[{\rm 2.}] Conversely, if $T_{\widetilde{u}}T_{\widetilde{v}}=T_{\widetilde{q}}$ on $\mb A_\omega^2(\Omega),$ then $T_uT_v=T_q$ on $\mb A^2_{\omega_\varrho}(\bl \theta(\Omega))$ for every one-dimensional representation $\varrho.$
 \end{enumerate}
 
\end{thm}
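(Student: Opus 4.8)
The plan is to reduce everything to the single isometric identification $R^G_{\varrho}(\mb A_\omega^2(\Omega)) \cong \mb A_{\omega_\varrho}^2(\bl \theta(\Omega))$ provided by \cite[Theorem 1.1]{kag}, together with the key structural observation that when a symbol $\widetilde{u}$ on $\Omega$ is $G$-invariant, the Toeplitz operator $T_{\widetilde{u}}$ on $\mb A_\omega^2(\Omega)$ \emph{commutes with the $G$-action} and hence preserves each relative invariant subspace $R^G_{\varrho}(\mb A_\omega^2(\Omega))$. Indeed, since the weight $\omega$ is $G$-invariant, each $\sigma \in G$ acts unitarily on $\mb A_\omega^2(\Omega)$ and on $L^2(\Omega,\omega)$, the multiplication operator $M_{\widetilde{u}}$ satisfies $\sigma M_{\widetilde{u}} \sigma^{-1} = M_{\sigma(\widetilde{u})} = M_{\widetilde{u}}$, and the Bergman projection $P_\omega$ commutes with the $G$-action because the reproducing kernel is $G$-invariant in each variable; therefore $T_{\widetilde{u}} = P_\omega M_{\widetilde{u}}$ commutes with each $\sigma$, so it leaves invariant the common eigenspaces $R^G_{\varrho}(\mb A_\omega^2(\Omega))$ of the (abelian, since one-dimensional characters) family $\{\sigma\}$. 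The first step, then, is to record this invariance precisely and to identify the restriction $T_{\widetilde{u}}\big|_{R^G_{\varrho}(\mb A_\omega^2(\Omega))}$, under the isometry of \cite{kag}, with the Toeplitz operator $T_u$ on $\mb A_{\omega_\varrho}^2(\bl \theta(\Omega))$ — this last point uses that $\widetilde{u} = u \circ \bl \theta$ so that multiplication by $\widetilde{u}$ intertwines with multiplication by $u$ under the map $f \mapsto f \circ \bl \theta$ (up to the $\ell_\varrho/J_{\bl\theta}$ factor that is built into the definition of $\omega_\varrho$).

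Granting this dictionary, the proof of the theorem is almost formal. For part (i), assume $T_u T_v = T_q$ on $\mb A^2_{\omega_\mu}(\bl \theta(\Omega))$ for one particular one-dimensional $\mu$. Transport this via the isometry to the identity $T_{\widetilde u} T_{\widetilde v} = T_{\widetilde q}$ \emph{holding on the single subspace} $R^G_{\mu}(\mb A_\omega^2(\Omega))$. The crucial algebraic input is that the ambient space decomposes as an orthogonal direct sum $\mb A_\omega^2(\Omega) = \bigoplus_{\varrho} R^G_{\varrho}(\mb A_\omega^2(\Omega))$ over all one-dimensional representations $\varrho$ of $G$ — this is exactly the isotypic decomposition for the abelian quotient $G/[G,G]$ acting through characters, and it is available here because $G$ is a pseudoreflection group (the sum of the relative invariants exhausts $\mb A^2_\omega(\Omega)$). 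Since $T_{\widetilde u}, T_{\widetilde v}, T_{\widetilde q}$ each preserve \emph{every} summand, the operator $T_{\widetilde u} T_{\widetilde v} - T_{\widetilde q}$ is block-diagonal with respect to this decomposition. Now I need to upgrade "vanishes on the $\mu$-block" to "vanishes on every block." The mechanism is that on any two blocks $R^G_{\varrho}$ and $R^G_{\varrho'}$ the operator is, after the isometric identifications, \emph{the same} concrete operator $T_u T_v - T_q$ acting on holomorphic function spaces on $\bl\theta(\Omega)$ — only the weight changes, from $\omega_\varrho$ to $\omega_{\varrho'}$. So what one actually needs is that the Toeplitz-operator identity $T_u T_v = T_q$ with $L^\infty$ symbols pulled back from $\bl\theta(\Omega)$ is \emph{independent of which weight $\omega_\varrho$ one puts on $\bl\theta(\Omega)$}. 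I expect this to be handled by passing through the common ``master'' space $\mb A^2_\omega(\Omega)$: the identity on the $\mu$-block gives, after transporting back up, that $P_\omega M_{\widetilde u} P_\omega M_{\widetilde v} f = P_\omega M_{\widetilde q} f$ for all $f \in R^G_\mu$; using $G$-equivariance and the fact that $\ell_\varrho/\ell_\mu$ is a $G$-invariant meromorphic multiplier carrying $R^G_\mu$ onto $R^G_\varrho$ (densely, or on polynomials), one propagates the identity to each $R^G_\varrho$, and thence by taking direct sums to all of $\mb A^2_\omega(\Omega)$, which is statement (i)(2); statement (i)(1) is then just reading (i)(2) back down on each block. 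Part (ii) is the trivial direction: $T_{\widetilde u}T_{\widetilde v} = T_{\widetilde q}$ on all of $\mb A^2_\omega(\Omega)$ restricts to each $R^G_\varrho$, which under the isometry is precisely $T_u T_v = T_q$ on $\mb A^2_{\omega_\varrho}(\bl\theta(\Omega))$.

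Concretely I would organize the write-up as: (a) a lemma that $G$-invariant symbols give Toeplitz operators commuting with the $G$-representation on $\mb A^2_\omega(\Omega)$, hence preserving every $R^G_\varrho$; (b) a lemma identifying, via the \cite{kag}-isometry $U_\varrho : \mb A^2_{\omega_\varrho}(\bl\theta(\Omega)) \to R^G_\varrho(\mb A^2_\omega(\Omega))$, the relation $U_\varrho T_u U_\varrho^{*} = T_{\widetilde u}\big|_{R^G_\varrho}$ for $\widetilde u = u\circ\bl\theta$; (c) the orthogonal decomposition $\mb A^2_\omega(\Omega) = \bigoplus_\varrho R^G_\varrho(\mb A^2_\omega(\Omega))$; then the theorem falls out by assembling (a)--(c). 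The one genuinely delicate point — the main obstacle — is step (b) in the case of a \emph{non-trivial} character $\varrho$: one must check that conjugating $T_u$ (defined with the \emph{weighted} measure $\omega_\varrho$ on $\bl\theta(\Omega)$, which includes the Jacobian and $|\ell_\varrho|^2$ factors) by $U_\varrho$ really lands on $P_\omega M_{\widetilde u} P_\omega$ restricted to $R^G_\varrho$, with no residual multiplier; this is a change-of-variables computation comparing $\int_{\bl\theta(\Omega)} u\, \overline{g_1} g_2\, \omega_\varrho$ with $\int_\Omega \widetilde u\, \overline{\widetilde g_1}\widetilde g_2\, \omega$ where $\widetilde g_i = (g_i\circ\bl\theta)\,\ell_\varrho$, and it works exactly because $\omega_\varrho$ was \emph{defined} to make it work — but one should also verify that the Bergman projections match up, i.e. that $U_\varrho P_{\omega_\varrho} U_\varrho^{*}$ is the compression of $P_\omega$ to $R^G_\varrho$, which is where the $G$-equivariance of $P_\omega$ from step (a) is used a second time. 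Once (b) is in hand, nothing else is more than bookkeeping.
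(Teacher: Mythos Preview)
Your lemmas (a) and (b) are correct and match the paper's Lemmas~4.1--4.2 (the intertwining $\Gamma_\varrho T_u = T_{\widetilde u}\Gamma_\varrho$). Part (ii) is indeed the trivial direction and your argument for it is fine. The gap is in step (c) and the propagation argument for part (i).

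You assert that $\mb A^2_\omega(\Omega) = \bigoplus_{\varrho} R^G_\varrho(\mb A^2_\omega(\Omega))$ with the sum ranging over \emph{one-dimensional} representations only, justifying this by ``$G$ is a pseudoreflection group.'' This is false whenever $G$ is non-abelian---already for $\mathfrak S_d$ with $d\ge 3$. The correct isotypic decomposition is $\mb A^2_\omega(\Omega)=\bigoplus_{\varrho\in\widehat G}\mb P_\varrho(\mb A^2_\omega(\Omega))$ over \emph{all} irreducibles, and the components with $\deg\varrho>1$ are not relative invariant subspaces. So even after (a) and (b) you cannot obtain (i)(2) by summing over characters. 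Secondly, your propagation device is misdescribed: $\ell_\varrho/\ell_\mu$ transforms by $\chi_\varrho\chi_\mu^{-1}$, not trivially, and in any case multiplication by a meromorphic function has no reason to commute with the Bergman projection $P_\omega$, so the Toeplitz identity does not transfer along it.

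The paper's fix is a reduction-to-the-trivial-component formula that works for \emph{every} $\varrho\in\widehat G$, one-dimensional or not. Using a module basis $\{\ell_{\varrho,i}\}_{i=1}^{(\deg\varrho)^2}$ of $\mb P_\varrho(\mb C[z_1,\dots,z_d])$ over $\mb C[z_1,\dots,z_d]^G$ and the kernel identity $M_{\ell_{\varrho,i}}^*\m B_\omega(\cdot,\bl z)=\ell_{\varrho,i}(\bl z)\,\m B_\omega(\cdot,\bl z)$, one shows
\[
T_{\widetilde u}\Big(\sum_i \ell_{\varrho,i}\,\widehat f_i\Big)=\sum_i \ell_{\varrho,i}\,\widetilde P_{\omega,\mathrm{tr}}(\widetilde u\,\widehat f_i)
\]
for $\widehat f_i\in \mb P_{\mathrm{tr}}(\mb A^2_\omega(\Omega))$. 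Hence the identity $T_{\widetilde u}T_{\widetilde v}=T_{\widetilde q}$ on any single $\mb P_\mu$ is equivalent to
\[
\widetilde P_{\omega,\mathrm{tr}}\big(\widetilde u\,\widetilde P_{\omega,\mathrm{tr}}(\widetilde v\,\widehat f)\big)=\widetilde P_{\omega,\mathrm{tr}}(\widetilde q\,\widehat f)\qquad\text{for all }\widehat f\in \mb P_{\mathrm{tr}}(\mb A^2_\omega(\Omega)),
\]
an equation that no longer mentions $\mu$. This then rebuilds the identity on every $\mb P_\varrho$ (including higher-dimensional ones), and summing over all of $\widehat G$ gives (i)(2); reading back through (b) gives (i)(1). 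Your outline can be repaired by replacing (c) and the $\ell_\varrho/\ell_\mu$ step with this reduction.
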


We provide a number of consequences of Theorem \ref{main1} in Sections \ref{appl} and \ref{genzer}. We briefly describe some of those below.
\begin{enumerate}[leftmargin=*]
    \item[1.]{\sf Zero-product theorem.} 
    Let $h_G^\infty(\bl \theta(\Omega))$ denote the set of all bounded $G$-pluriharmonic functions on $\bl \theta(\Omega)$ (cf. Definition \eqref{ph1}) and $\del_S\Omega$ denote the Shilov boundary of $\Omega.$ For the weight function $\omega_\varrho$ as in Equation \eqref{wei} with $\omega \equiv 1,$ we have the following as an application of Theorem \ref{main1}:
    \begin{thm}\label{result}
Let $\Omega = \mb D^d$ or $\mb B_d$ and $G$ be a finite pseudoreflection group with a basic polynomial map $\bl \theta.$ Suppose that $u, v \in h_G^\infty(\bl \theta(\Omega))$ are continuous on $\bl \theta(\Omega) \cup W$ for some relatively open subset $W \text{~of~} \del_S\bl \theta(\Omega).$ If $T_uT_v=0$ on $\mb A_{\omega_\varrho}^2(\bl \theta(\Omega))$ for a one-dimensional representation $\varrho $ of $ G,$ then either $u=0$ or $v=0.$
\end{thm}
Theorem \ref{result} provides explicit sufficient conditions for zero-product problem of Toeplitz operators on the weighted Bergman spaces of quotient domains such as the symmetrized polydisc, Rudin's domain, monomial polyhedron, see section \ref{appl}. In particular, the permutation group $\mathfrak{S}_d$ acts on the polydisc $\mb D^d$ and the symmetrization map $\bl s$ (cf. Equation \eqref{sym}) is a basic polynomial map associated to $\mathfrak{S}_d.$ The domain $\mb G_d:= \bl s(\mb D^d)$ is said to be the symmetrized polydisc. For  the sign representation of $\mathfrak{S}_d,$ $\mb A^2_{\omega_{\rm sign}}(\mb G_d)$ reduces to the Bergman space $\mb A^2(\mb G_d).$ Then a straightforward application of Theorem \ref{result} yields the following: 
\begin{cor} 
Suppose that $u, v \in h_{\mathfrak{S}_d}^\infty(\mb G_d)$ are continuous on $\mb G_d \cup W$ for some relatively open subset $W \text{~of~} \bl s(\mb T^d).$ If $T_uT_v=0$ on $\mb A^2(\mb G_d)$ then either $u=0$ or $v=0.$
\end{cor}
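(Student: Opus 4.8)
The plan is to obtain the Corollary as the special case of Theorem~\ref{result} corresponding to $\Omega=\mb D^d$, $G=\mathfrak S_d$ acting by permutation of the coordinates, $\bl\theta=\bl s$ the symmetrization map of Equation~\eqref{sym} (so that $\bl\theta(\Omega)=\bl s(\mb D^d)=\mb G_d$), $\varrho=\mathrm{sign}$ the sign representation of $\mathfrak S_d$, and the weight $\omega\equiv 1$. Concretely, I would first record that the standing hypotheses of Theorem~\ref{result} hold for this data: $\mathfrak S_d$ is a finite pseudoreflection group (it is generated by transpositions, each a pseudoreflection since $I_d-\sigma$ has rank one), $\mb D^d$ is an $\mathfrak S_d$-space, and $\bl s$ is a basic polynomial map associated with $\mathfrak S_d$ — all recalled in the Introduction. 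Thus only the identification of the relevant Bergman space and of the Shilov boundary remains to be matched.

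Next I would check that, for $\varrho=\mathrm{sign}$ and $\omega\equiv 1$, the weight $\omega_{\mathrm{sign}}$ of Equation~\eqref{wei} is constant. Indeed $\ell_{\mathrm{sign}}$ is (a scalar multiple of) the Vandermonde polynomial $\prod_{i<j}(z_i-z_j)$, and the Jacobian determinant $J_{\bl s}$ of the symmetrization map is also a scalar multiple of that same Vandermonde polynomial, so $\omega_{\mathrm{sign}}\equiv 1$ on $\mb G_d$ and $\mb A^2_{\omega_{\mathrm{sign}}}(\mb G_d)=\mb A^2(\mb G_d)$, exactly as noted in the paragraph preceding the Corollary. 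Hence the hypothesis ``$T_uT_v=0$ on $\mb A^2(\mb G_d)$'' is literally the hypothesis ``$T_uT_v=0$ on $\mb A^2_{\omega_{\mathrm{sign}}}(\mb G_d)$'' appearing in Theorem~\ref{result}.

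Finally I would match the boundary hypothesis: the Shilov boundary of $\mb D^d$ is the torus $\mb T^d$, and since $\bl s\colon\overline{\mb D^d}\to\overline{\mb G_d}$ is a proper surjective finite branched covering under which functions continuous on $\overline{\mb G_d}$ and holomorphic in $\mb G_d$ pull back to the $\mathfrak S_d$-invariant such functions on $\overline{\mb D^d}$ (with equality of sup norms by surjectivity), a maximum-modulus argument gives $\del_S\mb G_d=\bl s(\mb T^d)$. Therefore a set $W$ relatively open in $\bl s(\mb T^d)$ is a relatively open subset of $\del_S\bl s(\mb D^d)$, and ``$u,v$ continuous on $\mb G_d\cup W$'' is precisely the continuity hypothesis of Theorem~\ref{result}; combined with $u,v\in h^\infty_{\mathfrak S_d}(\mb G_d)$, Theorem~\ref{result} applied with $\varrho=\mathrm{sign}$ yields $u=0$ or $v=0$. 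Since this is a direct specialization, there is no genuine obstacle; the only point warranting an explicit sentence rather than a computation is the identification $\del_S\mb G_d=\bl s(\mb T^d)$ of the Shilov boundary of the symmetrized polydisc, which is standard via the proper map $\bl s$ and the maximum modulus principle.
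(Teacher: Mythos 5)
Your proposal is correct and follows the same route as the paper: the corollary is obtained by specializing Theorem \ref{result} to $\Omega=\mb D^d$, $G=\mathfrak S_d$, $\bl\theta=\bl s$, $\varrho=\mathrm{sign}$ and $\omega\equiv 1$, using that $\ell_{\mathrm{sign}}$ and $J_{\bl s}$ are both scalar multiples of the Vandermonde determinant so that $\omega_{\mathrm{sign}}\equiv 1$ and $\mb A^2_{\omega_{\mathrm{sign}}}(\mb G_d)=\mb A^2(\mb G_d)$. The only cosmetic difference is that the paper identifies $\del_S\mb G_d=\bl s(\mb T^d)$ by citing the general fact $\bl\theta^{-1}(\del_S\bl\theta(\Omega))=\del_S\Omega$ for proper maps, whereas you sketch a maximum-modulus argument; both are fine.
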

\item[2.]{\sf Generalized zero-product theorem on the weighted Bergman space on $\mb G_d.$} For $\alpha > -1,$ the continuous function $\omega_\alpha: \mb D \to (0,\infty)$ is defined by $\omega_\alpha(z) = (\alpha +1 ) (1-|z|^2)^\alpha.$ A similar result as Ahern and Cu{c}kovi\'{c}'s on generalized zero-product problem of Toeplitz operators on $\mb A^2(\mb D)$ \cite{MR1867348} is established for Toeplitz operators on $\mb A^2_{\omega_\alpha}(\mb D)$ in \cite{MR4295248}, under additional conditions on the symbols (cf. Proposition \ref{exone}). 
We extend that result for Toeplitz operators on $\mb A^2_{\omega_{\bl \alpha}}(\mb D^d),$ where $\omega_{\bl \alpha}: \mb D^d \to (0,\infty)$ is defined by $\omega_{\bl \alpha}(\bl z) = \prod_{i=1}^d(\alpha_i +1 ) (1-|z_i|^2)^{\alpha_i}$ for $\bl \alpha = (\alpha_1,\ldots,\alpha_d), \,\, \alpha_i \in \mb N\cup\{0\}.$ Let $\widetilde{\Delta}_{i}=(1-|z_i|^2)^2\frac{\del^2}{\del z_i\del\ov{z}_i}.$
\begin{thm}\label{expoly}
Let $f, g \in L^\infty(\mb D^d)$ be pluriharmonic symbols and $h \in L^\infty(\mb D^d)$ such that $\widetilde{\Delta}_i^{n_i}h \in L^1(\mb D^d, \omega_{\bl \alpha} dV)$ for $i=1,\ldots,d$ and $n_i=0,1,\ldots,\alpha_i.$ If $T_fT_g = T_h$ on $\mb A^2_{\omega_{\bl \alpha}}(\mb D^d),$ then $h=fg$ and either $\del_jf = 0$ or $\del_j\ov{g} =0$ for each $j=1,\ldots,d.$
\end{thm}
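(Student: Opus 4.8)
The plan is to reduce the $d$-dimensional statement on $\mb D^d$ to the one-dimensional result of Proposition \ref{exone} (the analogue of Ahern--Cu\v{c}kovi\'c on $\mb A^2_{\omega_\alpha}(\mb D)$), exploiting the product structure of both the domain $\mb D^d$, the weight $\omega_{\bl\alpha}(\bl z)=\prod_i(\alpha_i+1)(1-|z_i|^2)^{\alpha_i}$, and the weighted Bergman space $\mb A^2_{\omega_{\bl\alpha}}(\mb D^d)=\bigotimes_{i=1}^d\mb A^2_{\omega_{\alpha_i}}(\mb D)$. First I would set up this tensor decomposition carefully, so that a Toeplitz operator with a symbol depending on a single variable $z_j$ acts as $I\otimes\cdots\otimes T\otimes\cdots\otimes I$, with the nontrivial factor in the $j$-th slot; this is the standard slice technique for polydisc Toeplitz problems. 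The pluriharmonicity of $f$ and $g$ means each splits as a sum of a holomorphic and an antiholomorphic function; as in the Brown--Halmos-type arguments one reduces, after subtracting off the ``trivially commuting'' holomorphic/antiholomorphic pieces, to the mixed terms, and the conclusion $h=fg$ together with ``for each $j$, $\del_jf=0$ or $\del_j\ov g=0$'' is exactly the statement that in each variable separately one of the two mixed contributions vanishes.

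The key steps, in order: (1) record the tensor-product identification $\mb A^2_{\omega_{\bl\alpha}}(\mb D^d)\cong\bigotimes_i\mb A^2_{\omega_{\alpha_i}}(\mb D)$ and the corresponding factorization of Bergman projections and of Toeplitz operators with ``one-variable'' symbols; (2) using that $\widetilde\Delta_i=(1-|z_i|^2)^2\del_i\ov\del_i$ acts only on the $i$-th factor, observe that the hypothesis $\widetilde\Delta_i^{n_i}h\in L^1(\mb D^d,\omega_{\bl\alpha}\,dV)$ for $n_i\le\alpha_i$ is precisely the $d$-fold product version of the integrability hypothesis in Proposition \ref{exone}, so the Berezin-transform/range argument of \cite{MR4295248} applies variable-by-variable; (3) expand $f$ and $g$ into holomorphic plus antiholomorphic parts, compute $T_fT_g$ and compare with $T_h$, and test the operator identity against the standard monomial basis (or reproducing kernels $K^{(\bl\alpha)}_{\bl w}=\prod_i(1-\ov w_iz_i)^{-(2+\alpha_i)}$) to isolate, for each fixed $j$, the one-variable sub-identity; (4) invoke Proposition \ref{exone} in the $j$-th variable to conclude $\del_jf\cdot\del_j\ov g=0$ as a function, hence (by holomorphy in $z_j$ of $\del_jf$ and antiholomorphy of $\del_j\ov g$, and connectedness of $\mb D$) $\del_jf\equiv0$ or $\del_j\ov g\equiv0$ on $\mb D^d$; and finally (5) assemble these across $j=1,\dots,d$ and feed them back into the expansion of $T_fT_g$ to read off $h=fg$.

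The main obstacle I expect is step (3)--(4): the one-dimensional theorem is a statement about a single Toeplitz product on $\mb A^2_{\omega_\alpha}(\mb D)$, but here the ``coefficients'' of the $z_j$-dependence are themselves functions of the remaining variables, so I need to show that fixing (or integrating against reproducing kernels in) the other $d-1$ variables produces a genuine operator identity on $\mb A^2_{\omega_{\alpha_j}}(\mb D)$ to which Proposition \ref{exone} is literally applicable, and that the exceptional sets $\{\del_jf=0\}$ etc.\ obtained for a.e.\ choice of the other variables can be glued into a single holomorphic/antiholomorphic alternative valid on all of $\mb D^d$. Handling the cross terms where $f$'s antiholomorphic part meets $g$'s holomorphic part (the only genuinely non-Toeplitz contributions) and verifying that the integrability hypotheses survive the slicing are the technical heart; the rest is bookkeeping with the tensor decomposition. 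An alternative to the slicing argument, which I would fall back on if the gluing is awkward, is to run the Berezin-transform computation of \cite{MR4295248} directly on $\mb D^d$ using that the Berezin transform on $\mb A^2_{\omega_{\bl\alpha}}(\mb D^d)$ factorizes as a product of the one-variable Berezin transforms, and that $\sum_i\del_i\ov\del_i$-type differential operators annihilating the range can be applied in each variable in turn; this keeps everything on $\mb D^d$ and avoids the measure-theoretic gluing, at the cost of repeating the one-dimensional range-of-Berezin-transform analysis coordinatewise.
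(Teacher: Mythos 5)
Your proposal is essentially correct and, in the form of your stated fallback, coincides with what the paper actually does. The paper's proof runs exactly along your second route: it first converts $T_fT_g=T_h$ into a Berezin-transform identity via testing against the reproducing kernels $\bl K^{(\bl\alpha)}_{\bl w}$ (Lemma \ref{pr}), obtaining $B_{\bl\alpha}(v)=f_1\ov{g_2}$ with $v=h-\ov{f_2}g_1-f_1g_1-\ov{f_2}\ov{g_2}$; it then exploits the product form of the Berezin kernel by evaluating at $(w_0,\bl 0)$, which integrates out the last $d-1$ variables and yields a one-variable identity $B_{\alpha_1}u(w_0)=f_1(w_0,\bl 0)\ov{g_2}(w_0,\bl 0)$ with $u\in L^1(\mb D,\omega_{\alpha_1}dV_1)$; and it finishes by invoking the one-dimensional range-of-Berezin-transform statement (Theorem \ref{exone}(1)) and propagating the conclusion from the coordinate slice through the origin to all of $\mb D^d$ by composing with the automorphisms $\varphi_{\bl z}$, for which $T_fT_g=T_h$ implies $T_{f\circ\varphi_{\bl z}}T_{g\circ\varphi_{\bl z}}=T_{h\circ\varphi_{\bl z}}$ via the weighted composition unitaries. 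Your primary plan, which seeks a genuine operator identity on each one-variable Bergman space, is the one piece that does not survive (the symbols depend on all variables, so Toeplitz operators do not factor through the tensor decomposition), but you correctly flagged this as the weak point and your alternative --- applying the one-dimensional result at the level of the Berezin transform rather than at the level of operators, which is precisely what part (1) of Theorem \ref{exone} permits --- is the paper's argument. The only ingredient you describe more vaguely than the paper is the gluing step: the paper resolves it not by a measure-theoretic argument over almost-every slices but by the automorphism trick, which reduces an arbitrary base point to the origin in the remaining coordinates.
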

For $\bl \alpha= (\alpha,\ldots,\alpha),\,\alpha \in \mb N\cup\{0\},$ the weight function $\omega_{\bl \alpha}$ is $\mathfrak{S}_d$-invariant. Then there exists $\widetilde{\omega}_{\bl \alpha} \in L^\infty(\mb G_d)$ such that $\omega_{\bl \alpha} = \widetilde{\omega}_{\bl \alpha} \circ \bl s,$ where $\bl s$ is the symmetrization map as defined in Equation \eqref{sym}. Combining Theorem \ref{expoly} and Theorem \ref{main1}, we have the following result.
\begin{thm}\label{forsympoly}
Let $u,v \in L^\infty(\mb G_d)$ be $\mathfrak{S}_d$-pluriharmonic function and $q \in L^\infty(\mb G_d)$ such that $\widetilde{\Delta}_i^{t}(q \circ \bl s) \in L^1(\mb D^d, \omega_{\bl \alpha} dV)$ for $i=1,\ldots,d$ and $t=0,1,\ldots,\alpha.$ If $T_uT_v = T_q$ on  $\mb{A}_{\widetilde{\omega}_{\bl \alpha}} (\mb G_d),$ then $q=uv$ and either $\del_j\widetilde{u} = 0$ or $\del_j\ov{\widetilde{v}} =0$ for each $j=1,\ldots,d,$ where $\widetilde{u} = u \circ \bl s,\widetilde{v}=v \circ \bl s.$
\end{thm}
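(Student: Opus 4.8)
The plan is to deduce Theorem~\ref{forsympoly} by feeding Theorem~\ref{expoly} into Theorem~\ref{main1}, with the permutation group $\mathfrak{S}_d$ acting on $\mathbb D^d$ and the symmetrization map $\bl s$ as the basic polynomial map. First I would set up the relevant weight: since $\bl\alpha=(\alpha,\dots,\alpha)$ makes $\omega_{\bl\alpha}$ an $\mathfrak{S}_d$-invariant continuous positive function on $\mathbb D^d$, there is $\widetilde\omega_{\bl\alpha}\in L^\infty(\mb G_d)$ with $\omega_{\bl\alpha}=\widetilde\omega_{\bl\alpha}\circ\bl s$, and I should check that the sign representation $\varrho=\mathrm{sign}$ of $\mathfrak{S}_d$ produces, via Equation~\eqref{wei}, precisely the weight under which $\mb A^2_{\widetilde\omega_{\bl\alpha}}(\mb G_d)$ in the statement is defined; here $\ell_{\mathrm{sign}}$ is the Vandermonde polynomial and $J_{\bl s}$ is (a scalar multiple of) the same Vandermonde, so the quotient $|\ell_{\mathrm{sign}}|^2/|J_{\bl s}|^2$ is constant and $(\omega_{\bl\alpha})_{\mathrm{sign}}$ agrees with $\widetilde\omega_{\bl\alpha}$ up to that harmless constant.

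Next I would translate the hypotheses. Writing $\widetilde u=u\circ\bl s$, $\widetilde v=v\circ\bl s$, $\widetilde q=q\circ\bl s$, these are $\mathfrak{S}_d$-invariant functions in $L^\infty(\mathbb D^d)$; that $u,v$ are $\mathfrak{S}_d$-pluriharmonic on $\mb G_d$ means exactly that $\widetilde u,\widetilde v$ are pluriharmonic on $\mathbb D^d$ (this is the content of the definition of $h_G^\infty$ referenced at Equation~\eqref{ph1}), and the assumption $\widetilde\Delta_i^{\,t}(q\circ\bl s)\in L^1(\mathbb D^d,\omega_{\bl\alpha}\,dV)$ for $t=0,\dots,\alpha$ is literally the integrability hypothesis of Theorem~\ref{expoly} with $f=\widetilde u$, $g=\widetilde v$, $h=\widetilde q$ and $n_i$ ranging over $0,\dots,\alpha$. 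By Theorem~\ref{main1}(i), part~2, the hypothesis $T_uT_v=T_q$ on $\mb A^2_{(\omega_{\bl\alpha})_{\mathrm{sign}}}(\mb G_d)=\mb A^2_{\widetilde\omega_{\bl\alpha}}(\mb G_d)$ forces $T_{\widetilde u}T_{\widetilde v}=T_{\widetilde q}$ on $\mb A^2_{\omega_{\bl\alpha}}(\mathbb D^d)$.

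Now I would apply Theorem~\ref{expoly} directly to this identity: it yields $\widetilde q=\widetilde u\,\widetilde v$ and, for each $j$, either $\del_j\widetilde u=0$ or $\del_j\overline{\widetilde v}=0$. The first conclusion, $q\circ\bl s=(u\circ\bl s)(v\circ\bl s)=(uv)\circ\bl s$, gives $q=uv$ on $\mb G_d$ because $\bl s$ is surjective onto $\mb G_d$ (it is the quotient map), and the second is exactly the claimed dichotomy $\del_j\widetilde u=0$ or $\del_j\overline{\widetilde v}=0$ for each $j=1,\dots,d$. That completes the proof.

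The only genuinely delicate point—really the one step deserving care rather than being purely routine—is the identification of weights: verifying that the abstractly-defined $\omega_\varrho$ of Equation~\eqref{wei} for $\varrho=\mathrm{sign}$ coincides (up to a positive constant, which does not affect the Bergman space or its Toeplitz operators) with the weight $\widetilde\omega_{\bl\alpha}$ named in the statement, and checking the compatibility of the $L^1$-integrability hypotheses under the change of variables by $\bl s$. Both reduce to the fact that $\ell_{\mathrm{sign}}$ and $J_{\bl s}$ are proportional Vandermonde determinants for $\mathfrak{S}_d$ on $\mathbb C^d$; once that is in hand, Theorem~\ref{main1} and Theorem~\ref{expoly} combine mechanically.
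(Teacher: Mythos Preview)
Your proposal is correct and follows essentially the same approach as the paper: the paper's argument (at the end of Section~\ref{genzer}) is simply to invoke Theorem~\ref{main1} to pass from $T_uT_v=T_q$ on $\mb A^2_{\widetilde\omega_{\bl\alpha}}(\mb G_d)$ to $T_{\widetilde u}T_{\widetilde v}=T_{\widetilde q}$ on $\mb A^2_{\omega_{\bl\alpha}}(\mb D^d)$, and then apply Theorem~\ref{expoly}. You have spelled out in more detail the weight identification via the sign representation and the use of surjectivity of $\bl s$ to recover $q=uv$, but the strategy is identical; one small remark is that the implication ``$\mathfrak{S}_d$-pluriharmonic $\Rightarrow$ $\widetilde u,\widetilde v$ pluriharmonic'' is only the one direction provided by Lemma~\ref{ph} (the paper itself notes the converse is unclear), though that direction is all that is needed here.
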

\end{enumerate}

\subsection{Characterization of commuting pairs of Toeplitz operators}

The following theorem states that a pair of commuting Toeplitz operators on the weighted Bergman space on a quotient domain $\Omega/G$ can be characterized by a pair of commuting Toeplitz operators on $\mb A^2(\Omega)$ whenever $G$ is a finite pseudoreflection group.
\begin{thm}\label{main2}
Suppose that $G$ is a finite pseudoreflection group, the bounded domain $\Omega \subseteq \mb C^d$ is a $G$-space and $\bl \theta: \Omega \to \bl \theta(\Omega)$ is a basic polynomial map associated to the group $G.$ Let $\widetilde{u}$ and $\widetilde{v}$ be $G$-invariant functions in $L^\infty(\Omega)$ such that $\widetilde{u} = u \circ \bl \theta$ and $\widetilde{v} = v \circ \bl \theta$. 
\begin{enumerate}
    \item[{\rm 1.}] If for a one-dimensional representation $\mu$ of $G,$ $T_uT_v=T_v T_u$ on $\mb A^2_{\omega_\mu}(\bl \theta(\Omega)),$ then
\begin{enumerate}
    \item[\rm (i)] $T_uT_v=T_v T_u$ on $\mb A^2_{\omega_\varrho}(\bl \theta(\Omega))$ for every one-dimensional representation $\varrho$ and
    \item[\rm (ii)] $T_{\widetilde{u}}T_{\widetilde{v}}=T_{\widetilde{v}}T_{\widetilde{u}}$ on $\mb A_\omega^2(\Omega),$
\end{enumerate}
where the weight function $\omega : \Omega \to (0,\infty)$ is $G$-invariant and continuous and $\omega_\varrho : \bl \theta(\Omega) \to (0,\infty)$ is as defined in Equation \eqref{wei} for every one-dimensional representation $\varrho$.
    \item[{\rm 2.}] Conversely, if $T_{\widetilde{u}}T_{\widetilde{v}}=T_{\widetilde{v}}T_{\widetilde{u}}$ on $\mb A^2_\omega(\Omega),$ then $T_uT_v=T_vT_u$ on $\mb A^2_{\omega_\varrho}(\bl \theta(\Omega))$ for every one-dimensional representation $\varrho.$
\end{enumerate}\end{thm}
Using the above results, we provide  characterizations of a pair of commuting Toeplitz operators on the quotient domains such as symmetrized polydisc and Rudin's domains in Section \ref{appl}.

\section{Preliminaries}\label{section 2}
We begin by recalling a number of useful definitions and standard results about pseudoreflection groups. 
\subsection{Chevalley-Shephard-Todd theorem}
Let $G$ be a finite pseudoreflection group acting on the set of complex-valued functions on $\mb C^d$ by the action defined in Equation \eqref{action}. A function $f$ is said to be $G$-invariant if $\sigma(f)=f$ for all $\sigma \in G.$ We denote the ring of all complex polynomials in $d$-variables by $\mb C[z_1,\ldots,z_d]$. The set of all $G$-invariant polynomials, denoted by $\mb C[z_1,\ldots,z_d]^G$, forms a subring and coincides with the relative invariant subspace $R^G_{tr}(\mb C[z_1,\ldots,z_d])$ associated to trivial representation of $G$. Chevalley, Shephard and Todd characterized finite pseudoreflection groups in the following theorem. 
\begin{thm*}[Chevalley-Shephard-Todd theorem]\cite[ p. 112, Theorem 3]{MR1890629}\label{A}
The invariant ring $\C[z_1,\ldots,z_d]^G$ is equal to $\C[\theta_1,\ldots,\theta_d]$, where $\theta_i$'s are algebraically independent
homogeneous polynomials if and only if $G$ is a finite pseudoreflection group. 
\end{thm*}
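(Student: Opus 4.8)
The statement to be proved is the classical Chevalley--Shephard--Todd theorem, so the plan is to establish the two implications separately. Throughout put $S=\C[z_1,\dots,z_d]$, $R=S^G=\C[z_1,\dots,z_d]^G$, write $R^+\subseteq R$ for the ideal of elements of strictly positive degree, and let $\mathcal R\colon S\to R$, $\mathcal R(f)=\tfrac1{|G|}\sum_{\sigma\in G}\sigma(f)$, denote the Reynolds operator, an $R$-linear graded projection. Since each $z_i$ satisfies the monic equation $\prod_{\sigma\in G}\bigl(X-\sigma(z_i)\bigr)=0$ with coefficients in $R$, the ring $S$ is module-finite over $R$; in particular $R$ has Krull dimension $d$, $S/R^+S$ is a finite-dimensional $\C$-vector space, and (once the theorem is proved) the number of algebraically independent generators of $R$ is automatically $d=\dim R$. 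For the direction ``$G$ a pseudoreflection group $\Rightarrow$ $R$ is polynomial'' (Chevalley's direction) the crucial input is: if $\sigma\in G$ is a pseudoreflection whose fixed hyperplane is the zero locus of a linear form $\ell_\sigma$, then $h-\sigma(h)\in\ell_\sigma S$ for every $h\in S$, because $h-\sigma(h)$ vanishes on that hyperplane. From this one proves Chevalley's syzygy lemma: if $f_1,\dots,f_m\in R$ are homogeneous with $f_1\notin(f_2,\dots,f_m)S$, then every homogeneous relation $a_1f_1+\dots+a_mf_m=0$ with $a_i\in S$ forces $a_1\in R^+S$. The proof is by induction on $\deg a_1$: apply a pseudoreflection $\sigma$ to the relation, subtract, divide out $\ell_\sigma$ to lower the degree, invoke the inductive hypothesis to get $\sigma(a_1)\equiv a_1\pmod{R^+S}$, conclude this for all $\sigma\in G$ since $G$ is generated by pseudoreflections, and finish by applying $\mathcal R$.

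Granting the lemma, I would take $f_1,\dots,f_m$ to be a minimal homogeneous generating set of the Hilbert ideal $R^+S$. Because $S/R^+S$ is finite-dimensional, $\sqrt{R^+S}=(z_1,\dots,z_d)$, so $m\ge d$; conversely, Chevalley's lemma implies (by the standard deduction) that $f_1,\dots,f_m$ is a regular sequence in $S$, whence $m\le d$. Therefore $m=d$, the sequence $f_1,\dots,f_d$ is a homogeneous system of parameters, and in particular $f_1,\dots,f_d$ are algebraically independent. Finally $R=\C[f_1,\dots,f_d]$, by induction on degree: a homogeneous $f\in R^+$ lies in $R^+S=\sum_i f_iS$, say $f=\sum_i g_if_i$ with $g_i$ homogeneous of strictly smaller degree; applying $\mathcal R$ and using $f,f_i\in R$ gives $f=\sum_i\mathcal R(g_i)f_i$ with $\mathcal R(g_i)\in R$ of smaller degree, hence $\mathcal R(g_i)\in\C[f_1,\dots,f_d]$ by induction, so $f\in\C[f_1,\dots,f_d]$.

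For the converse, assume $R=\C[\theta_1,\dots,\theta_d]$ with the $\theta_i$ homogeneous of degrees $d_i$ and algebraically independent, so that the Poincar\'e series is $P_R(t):=\sum_{k\ge0}\dim_\C(R_k)\,t^k=\prod_{i=1}^d(1-t^{d_i})^{-1}$. Molien's formula gives $P_R(t)=\tfrac1{|G|}\sum_{\sigma\in G}\det(I-t\sigma)^{-1}$, and I would compare Laurent expansions at $t=1$. Matching the coefficient of $(1-t)^{-d}$ (only $\sigma=\mathrm{id}$ contributes there) yields $|G|=\prod_i d_i$. Matching the coefficient of $(1-t)^{-(d-1)}$ yields $N=\sum_i(d_i-1)$, where $N$ is the number of pseudoreflections in $G$: the identity contributes nothing at this order; a pseudoreflection $\sigma$ with nontrivial eigenvalue $\lambda$ contributes $(1-\lambda)^{-1}(1-t)^{-(d-1)}$ to leading order, and pairing $\sigma$ with $\sigma^{-1}$ and using $(1-\lambda)^{-1}+(1-\overline{\lambda})^{-1}=1$ shows the pseudoreflections together contribute $\tfrac N2(1-t)^{-(d-1)}$, while every other $\sigma$ gives a pole of order at most $d-2$. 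Now let $G'\le G$ be the subgroup generated by all pseudoreflections of $G$; by the direction already proved, $S^{G'}=\C[\eta_1,\dots,\eta_d]$ with $\eta_j$ homogeneous of degrees $d_j'$, and since $G$ and $G'$ contain exactly the same pseudoreflections, $N=\sum_j(d_j'-1)$, hence $\sum_i(d_i-1)=\sum_j(d_j'-1)$. Moreover $S^{G'}$ is module-finite over the polynomial ring $R$ and, being itself Cohen--Macaulay of dimension $d$, is a graded free $R$-module of rank $e:=[\mathrm{Frac}(S^{G'}):\mathrm{Frac}(R)]=[G:G']$ (by Galois theory and graded Auslander--Buchsbaum). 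Writing a homogeneous basis in degrees $a_1,\dots,a_e$ gives $\phi(t):=P_{S^{G'}}(t)\,P_R(t)^{-1}=\prod_i(1-t^{d_i})\cdot\prod_j(1-t^{d_j'})^{-1}=\sum_{k=1}^e t^{a_k}$, a polynomial with $\phi(0)=1$. A first-order expansion at $t=1$, using $\sum_i(d_i-1)=\sum_j(d_j'-1)$, shows $\phi'(1)=0$; since $\phi'(1)=\sum_k a_k$ with all $a_k\ge0$, every $a_k=0$, so $\phi(t)=\sum_{k=1}^e 1=e$ is constant, and comparing with $\phi(0)=1$ forces $e=1$. Thus $G=G'$ is generated by pseudoreflections.

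The technical heart, and the step I expect to cost the most work to write in full, is Chevalley's direction: proving the syzygy lemma cleanly, and deducing from it that the minimal homogeneous generators of $R^+S$ form a regular sequence (equivalently, are algebraically independent). The converse is then essentially bookkeeping---Molien's formula, two terms of a Laurent expansion at $t=1$, and the freeness of $S^{G'}$ over $R$---the only points needing mild care being the pairing $\sigma\leftrightarrow\sigma^{-1}$ in the pseudoreflection contribution to the Molien sum, and the appeal to graded Auslander--Buchsbaum (via module-finiteness of $S^{G'}$ over $R$) for the freeness and the identification of its rank with $[G:G']$.
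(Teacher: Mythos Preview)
The paper does not prove this statement: the Chevalley--Shephard--Todd theorem is quoted as a classical result with a citation \cite[p.~112, Theorem~3]{MR1890629} and no proof is given. So there is no ``paper's own proof'' to compare against; your plan should be judged on its own merits.

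Your outline is the standard two-step argument and is essentially correct. One point in the forward direction is slightly scrambled: you write that ``Chevalley's lemma implies (by the standard deduction) that $f_1,\dots,f_m$ is a regular sequence in $S$, whence $m\le d$.'' The standard deduction from the syzygy lemma is \emph{algebraic independence} of $f_1,\dots,f_m$ (via Euler's identity applied to a minimal algebraic relation), which already gives $m\le d$ since $\operatorname{trdeg}_{\mathbb C}\operatorname{Frac}(S)=d$. Regularity then follows \emph{afterwards}, once $m=d$, because a homogeneous system of parameters in the Cohen--Macaulay ring $S=\mathbb C[z_1,\dots,z_d]$ is automatically a regular sequence. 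Chevalley's lemma by itself only yields $a_1\in R^+S$, not $a_1\in(f_1,\dots,f_k)S$, so it does not give regularity directly. This is a reorganization, not a gap.

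The converse via Molien, the two-term Laurent expansion at $t=1$, and the comparison with the pseudoreflection subgroup $G'$ is correct; the pairing $\sigma\leftrightarrow\sigma^{-1}$ also covers involutions (where $\lambda=-1$ and $(1-\lambda)^{-1}=1/2$), and the freeness of $S^{G'}$ over $R$ with rank $[G:G']$ follows as you say from graded Auslander--Buchsbaum together with $\operatorname{Frac}(S)/\operatorname{Frac}(R)$ being Galois with group $G$.
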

We abbreviate it as CST theorem for further references. The collection of homogeneous polynomials $\{\theta_i\}_{i=1}^d$ is called a homogeneous system of parameters (hsop) or a set of basic polynomials associated to the pseudoreflection group $G$. 
The map ${\bl\theta}: \C^d \rightarrow \C^d$, defined by
\bea\label{theta}
{\bl\theta}(\bl z) = \big(\theta_1(\bl z),\ldots,\theta_d(\bl z)\big),\,\,\bl z\in\C^d
\eea is called a basic polynomial map associated to the group $G.$ 
\begin{prop}\label{properholo}
Let $G$ be a finite pseudoreflection group and $\Omega$ be a $G$-space. For a basic polynomial map $\bl \theta: \mb C^d \to \mb C^d$ associated to the group $G,$
\begin{enumerate}
    \item[{\rm (i)}] $\bl \theta(\Omega)$ is a domain and
    \item[{\rm (ii)}] $\bl \theta : \Omega \to \bl \theta(\Omega)$ is a proper map.
    \item[{\rm (iii)}] The quotient $\Omega / G$ is biholomorphically equivalent to the domain $\bl \theta(\Omega)$
\end{enumerate}
\end{prop}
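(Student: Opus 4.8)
The plan is to reduce all three statements to standard facts about finite group actions and invariant theory, using the CST theorem to identify the basic polynomial map $\bl\theta$ with the quotient map. First I would recall that since $G$ is a finite pseudoreflection group, the CST theorem gives $\C[z_1,\ldots,z_d]^G = \C[\theta_1,\ldots,\theta_d]$ with the $\theta_i$ algebraically independent. Consequently the morphism $\bl\theta\colon \C^d \to \C^d$ is precisely the categorical quotient $\C^d \to \C^d/\!/G$ in the algebraic category, and in particular $\bl\theta$ separates $G$-orbits: $\bl\theta(\bl z) = \bl\theta(\bl w)$ if and only if $\bl w = \sigma\cdot\bl z$ for some $\sigma\in G$. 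This orbit-separation property is the key input and I would either cite it from \cite{MR1890629} or derive it from the fact that $G$-invariant polynomials separate $G$-orbits (averaging any coordinate-separating polynomial over $G$).

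For (ii), properness: I would argue that $\bl\theta\colon \C^d\to\C^d$ is a finite morphism (each $z_i$ is integral over $\C[\theta_1,\ldots,\theta_d]$ because it satisfies the monic polynomial $\prod_{\sigma\in G}(T - (\sigma\cdot z_i))$, whose coefficients are $G$-invariant hence polynomials in the $\theta_j$), so $\bl\theta$ is a proper map of $\C^d$ onto $\C^d$ with finite fibers, and in the Euclidean topology it is a closed map with compact (in fact finite) fibers. Restricting to $\Omega$: given a compact $K\subseteq\bl\theta(\Omega)$, the preimage $\bl\theta^{-1}(K)\cap\Omega$ is contained in the compact set $\bl\theta^{-1}(K)$ (compact by properness over $\C^d$) and is closed in $\Omega$; one must check it is closed in $\C^d$, i.e. that no sequence in $\bl\theta^{-1}(K)\cap\Omega$ escapes to $\partial\Omega$. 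This uses that $\Omega$ is $G$-invariant: if $\bl z_n\to\bl z_0\in\partial\Omega$ with $\bl\theta(\bl z_n)\in K$, then $\bl\theta(\bl z_0)\in K\subseteq\bl\theta(\Omega)$, so $\bl z_0 = \sigma\cdot\bl w$ for some $\bl w\in\Omega$, $\sigma\in G$; but then $\bl z_0\in\sigma\cdot\Omega = \Omega$ since $\Omega$ is a $G$-space, contradicting $\bl z_0\in\partial\Omega$. Hence $\bl\theta\colon\Omega\to\bl\theta(\Omega)$ is proper.

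For (i), that $\bl\theta(\Omega)$ is a domain: $\Omega$ is open and connected, $\bl\theta$ is continuous, so $\bl\theta(\Omega)$ is connected; openness follows because $\bl\theta\colon\C^d\to\C^d$ is an open map (a finite surjective morphism between smooth varieties of the same dimension is open — equivalently, it is flat, or one can invoke that a proper holomorphic map with discrete fibers between equidimensional complex manifolds is open), so the image of the open set $\Omega$ is open. Alternatively, openness of $\bl\theta$ on the locus where the Jacobian $J_{\bl\theta}$ is nonzero is immediate from the inverse function theorem, and on the branch locus one uses that $\bl\theta$ is a branched cover; I would cite \cite{MR807258} or \cite{MR4404033} here rather than belabor it. For (iii), the biholomorphic equivalence $\Omega/G \cong \bl\theta(\Omega)$: the proper map $\bl\theta|_\Omega$ factors through the topological quotient $\Omega/G$ (by orbit-separation the induced map $\bar{\bl\theta}\colon\Omega/G\to\bl\theta(\Omega)$ is a continuous bijection; properness makes it a homeomorphism), and one transports the complex-analytic structure: $\bl\theta$ identifies $\O(\bl\theta(\Omega))$ with the $G$-invariant holomorphic functions $\O(\Omega)^G$, which is exactly the structure sheaf defining $\Omega/G$ as a complex space, and one checks this complex space is reduced and normal and embeds in $\C^d$ via $\bl\theta$ — this is the analytic CST statement, for which I would cite \cite[Subsection 3.1.1]{MR4404033} and \cite[Proposition 1]{MR2542964}. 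The main obstacle is the last point: verifying that the quotient complex-analytic structure on $\Omega/G$ is genuinely the one pulled back along $\bl\theta$ (normality of the invariant ring, reducedness), rather than merely that the two are homeomorphic. I expect to dispatch this by quoting the cited references, since a self-contained proof would essentially reproduce the analytic Chevalley–Shephard–Todd theorem.
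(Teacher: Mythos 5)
Your proposal is correct. Note that the paper does not actually prove this proposition: it simply cites \cite[Proposition 1, p.~556]{MR3133729} for (i)--(ii) and \cite{MR807258}, \cite{MR4404033}, \cite{MR2542964} for (iii), so there is no in-paper argument to compare against. What you have written is the standard proof underlying those citations, and the details check out: integrality of each $z_i$ over $\C[\theta_1,\ldots,\theta_d]$ via the monic polynomial $\prod_{\sigma\in G}\bigl(T-(\sigma\cdot z)_i\bigr)$ gives finiteness and hence properness of $\bl\theta$ on all of $\C^d$; orbit separation (invariants separate the finitely many points of two distinct orbits after averaging, and by CST the invariants are polynomials in the $\theta_j$) combined with $G$-invariance of $\Omega$ shows that a limit point of $\bl\theta^{-1}(K)\cap\Omega$ with image in $\bl\theta(\Omega)$ must itself lie in $\Omega$, which yields properness of the restriction; openness plus connectedness gives (i); and for (iii) you correctly isolate the only genuinely nontrivial point --- that the analytic structure on $\Omega/G$ transported via the homeomorphism $\overline{\bl\theta}$ agrees with the quotient complex-space structure --- and defer it to the analytic Chevalley--Shephard--Todd references, exactly as the paper does. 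The one place to be slightly careful is the openness of $\bl\theta$ on the branch locus; your fallback of citing \cite{MR807258} or the general fact that a proper, finite-fibered, surjective holomorphic map between equidimensional complex manifolds is open is the right move.
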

Proof of (i) and (ii) can be found in \cite[Proposition 1, p.556]{MR3133729}. The remaining part of the Proposition follows from \cite[Proposition 1]{MR807258}, see also \cite[Subsection 3.1.1]{MR4404033}, \cite{MR2542964}.
\begin{rem}\label{rem1}
We note a few relevant properties of a basic polynomial map associated to $G$.
\begin{enumerate}
    \item[$1.$] Although a set of basic polynomials associated to $G$ is not unique but the degrees of $\theta_i$'s are unique for $G$ up to order. 
    \item[$2.$]Let $\bl \theta^\prime : \mb C^d \to \mb C^d$ be another basic polynomial map of $G.$ Then $\bl \theta^\prime(\Omega)$ is biholomorphically equivalent to $\bl \theta(\Omega)$ \cite[p. 5, Proposition 2.2]{kag}. Therefore, the notion of a basic polynomial map can be used unambiguously in the sequel. 
    \item[$3.$] Clearly, any function on $\bl \theta(\Omega)$ can be associated to a $G$-invariant function on $\Omega.$ Conversely, any $G$-invariant function $u$ on $\Omega$ can be written as $u=\widehat{u} \circ \bl \theta$ for a function $\widehat{u}$ on $\bl \theta(\Omega).$ Let $\mathfrak q : \Omega \to \Omega/G$ be the quotient map. Since the function $u$ is $G$-invariant, so $u=u_1\circ\mathfrak q$ for some function $u_1$ defined on $\Omega/G$. It is known that $\bl \theta = \mathfrak h \circ \mathfrak q$ for a biholomorphic map $\mathfrak h : \Omega/G \to \bl \theta(\Omega)$ \cite[p. 8, Proposition 3.4]{MR4404033}. Then $u$ can be written as $u=\widehat{u}\circ \mathfrak h \circ \mathfrak q=\widehat{u} \circ \bl \theta.$ Clearly, if $u$ is in $L^\infty(\Omega),$ then $\widehat{u} \in L^\infty(\bl \theta(\Omega)).$
\end{enumerate}
\end{rem}

\subsection{One-dimensional representations of pseudoreflection groups}
Since the one-dimensional representations of a finite pseudoreflection group $G$ play an important role in our discussion, we elaborate on some relevant results for the same. We denote the set of equivalence classes of one-dimensional representations of $G$ by $\w{G}_1$.
\begin{defn}
A hyperplane $H$ in $\C^d$ is called reflecting if there exists a pseudoreflection in $G$ acting trivially
on $H$.
\end{defn}
  For a pseudoreflection $\sigma \in G,$ define $H_{\sigma} := \ker(I_d - \sigma).$ By definition, the subspace $H_{\sigma}$ has dimension $d-1$. Clearly, $\sigma$ fixes the hyperplane $H_{\sigma}$ pointwise. Hence each $H_\sigma$ is a reflecting hyperplane.  By definition, $H_\sigma$ is the zero set of a non-zero homogeneous linear polynomial $L_\sigma$ on $\C^d$, determined up to a non-zero constant multiple, that is, $H_\sigma = \{\bl z\in\C^d: L_\sigma(\bl z) = 0\}$. Moreover, the elements of $G$ acting trivially on a  reflecting hyperplane forms a cyclic subgroup of $G$. 
  
  Let $H_1,\ldots, H_t$ denote the distinct reflecting hyperplanes associated to the group $G$ and  the corresponding cyclic subgroups are $G_1,\ldots, G_t,$ respectively. Suppose $G_i = \langle a_i \rangle$ and the order of each $a_i$ is $m_i$ for $i=1,\ldots,t.$ For every one-dimensional representation $\varrho,$ there exists a unique $t$-tuple of non-negative integers $(c_1,\ldots,c_t),$ where $c_i$'s are the least non-negative integers that satisfy the following: \bea\label{ci}\varrho(a_i) =\big( \det(a_i)\big)^{c_i}, \,\, i=1,\ldots,t.\eea The $t$-tuple $(c_1,\ldots,c_t)$ solely depends on the representation $\varrho.$ The character of the one-dimensional representation $\varrho,$ $\chi_\varrho : G \to \mb C^*$ coincides with the representation $\varrho.$ 
The set of polynomials relative to the representation $\varrho \in \w{G}_1$ is given by \bea\label{invar} R^G_{\varrho}(\mb C[z_1,\ldots,z_d]) = \{f \in \mb C[z_1,\ldots,z_d] : \sigma(f) = \chi_\varrho(\sigma) f, ~ {\rm for ~~ all~} \sigma \in G\}.\eea  The elements of the subspace $R^G_{\varrho}(\mb C[z_1,\ldots,z_d])$ are said to be  $\varrho$-invariant polynomials. Stanley proves a fundamental property of the elements of $R^G_{\varrho}(\mb C[z_1,\ldots,z_d])$ in \cite[p. 139, Theorem 3.1]{MR460484}.
\begin{lem}\label{gencz}\cite[p. 139, Theorem 3.1]{MR460484}
Suppose that the linear polynomial $\ell_i$ is a defining function of $H_i$ for $i=1,\ldots,t.$ The homogeneous polynomial $\ell_\varrho = \prod_{i=1}^t \ell_i^{c_i }$ forms a basis of the module $R^G_{\varrho}(\mb C[z_1,\ldots,z_d])$ over the ring $\mb C[z_1,\ldots,z_d]^G,$ where $c_i$'s are unique non-negative integers as described in Equation \eqref{ci}.
\end{lem}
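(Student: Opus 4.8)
Lemma \ref{gencz} asserts: with $\ell_i$ a defining linear form for the reflecting hyperplane $H_i$, the homogeneous polynomial $\ell_\varrho = \prod_{i=1}^t \ell_i^{c_i}$ generates the $\mb C[z_1,\ldots,z_d]^G$-module $R^G_\varrho(\mb C[z_1,\ldots,z_d])$, where the $c_i$ are the exponents defined by $\varrho(a_i) = (\det a_i)^{c_i}$ with $c_i$ minimal nonnegative.

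**Approach.** The plan is to prove two things: (a) $\ell_\varrho$ is itself $\varrho$-invariant, i.e. $\ell_\varrho \in R^G_\varrho(\mb C[z_1,\ldots,z_d])$; and (b) every $f \in R^G_\varrho(\mb C[z_1,\ldots,z_d])$ is divisible by $\ell_\varrho$, with quotient $G$-invariant. Step (b) is the heart of the matter. Since by the CST theorem $\mb C[z_1,\ldots,z_d]^G = \mb C[\theta_1,\ldots,\theta_d]$ is the full invariant ring, once divisibility $\ell_\varrho \mid f$ is shown, the quotient $f/\ell_\varrho$ is automatically $G$-invariant (because $\sigma(f) = \chi_\varrho(\sigma)f$ and $\sigma(\ell_\varrho) = \chi_\varrho(\sigma)\ell_\varrho$ give $\sigma(f/\ell_\varrho) = f/\ell_\varrho$ as a rational function, hence as a polynomial since $\mb C[z_1,\ldots,z_d]$ is a UFD and $\ell_\varrho$ divides $f$ in it). So the module-generation statement reduces to the single divisibility claim.

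**Key steps in order.** First I would establish (a). Fix a reflecting hyperplane $H_i$ with cyclic stabilizer $G_i = \langle a_i\rangle$ of order $m_i$, acting on the one-dimensional quotient $\mb C^d/H_i$ (equivalently on the line spanned by the $(-1)$-eigendirection) by the scalar $\det(a_i)$, which is a primitive $m_i$-th root of unity. The linear form $\ell_i$ is an eigenvector for the $G_i$-action: $a_i(\ell_i) = \overline{\det(a_i)}\,\ell_i$ — one must be careful with the action convention $\sigma(f)(\bl z) = f(\sigma^{-1}\bl z)$, which contributes an inverse; tracking this carefully is where most of the bookkeeping lives. Grouping the hyperplanes into $G$-orbits and using that $G$ permutes the $\ell_i$ within an orbit up to scalars, a direct computation shows $\sigma(\ell_\varrho) = \chi_\varrho(\sigma)\,\ell_\varrho$ first for pseudoreflections $\sigma = a_i$ (using $\varrho(a_i) = (\det a_i)^{c_i}$ and the defining property of $c_i$), and then for all of $G$ since $G$ is generated by pseudoreflections.

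Second, and harder, is (b): that every $\varrho$-invariant $f$ is divisible by $\ell_\varrho$. The argument is local at each hyperplane $H_i$. Consider the generator $a_i$ of $G_i$; since $a_i(f) = \chi_\varrho(a_i)f = (\det a_i)^{c_i} f$, I would diagonalize: choose coordinates so $H_i = \{y_1 = 0\}$ and $a_i$ acts by $y_1 \mapsto \zeta y_1$ ($\zeta = \det a_i$ of order $m_i$), fixing the other coordinates. Expanding $f = \sum_{k\ge 0} f_k(y_2,\ldots,y_d)\, y_1^k$, the relation $a_i(f) = \zeta^{c_i} f$ forces $\zeta^{-k} = \zeta^{c_i}$ — hence $f_k = 0$ — for every $k \not\equiv -c_i \pmod{m_i}$; in particular $y_1^{c_i'} \mid f$ where $c_i'$ is the least nonnegative residue of $-c_i$. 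The delicate point is matching this with the claimed exponent $c_i$ in $\ell_\varrho = \prod \ell_i^{c_i}$, which again comes down to the inverse in the action convention and the precise definition $c_i$ "least nonnegative." Since distinct $\ell_i$ are pairwise non-associate irreducibles in the UFD $\mb C[z_1,\ldots,z_d]$, these local divisibilities combine to give $\prod_i \ell_i^{c_i} \mid f$, i.e. $\ell_\varrho \mid f$.

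**Main obstacle.** The real content is the local analysis in step (b) — extracting, from a single scalar eigenvalue relation $a_i(f) = \chi_\varrho(a_i) f$, the vanishing of $f$ to the correct order along $H_i$ — together with scrupulously reconciling the exponent arising there with the exponent $c_i$ in the statement, given that the $G$-action on functions carries the inverse $\sigma^{-1}$. Everything else (passing from pseudoreflection generators to all of $G$, passing from divisibility to module generation via the UFD property, automatic $G$-invariance of the quotient) is formal. I should also remark that this is precisely Stanley's theorem \cite[p.~139, Theorem 3.1]{MR460484}, so in the write-up I would either cite it directly or reproduce the short argument above; the paper has opted to cite it.
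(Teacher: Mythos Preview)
The paper does not prove this lemma; it merely cites Stanley \cite[p.~139, Theorem~3.1]{MR460484}.  Your sketch \emph{is} the standard proof of Stanley's theorem---verify $\ell_\varrho\in R^G_\varrho$, then show $\ell_i^{c_i}\mid f$ for each $i$ by diagonalising the cyclic action of $a_i$, and combine via the UFD property---and it is essentially correct.  You are also right that, since the paper simply cites the result, that is the appropriate thing to do in the write-up.

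One substantive remark on the ``delicate point'' you flag.  With the paper's convention $\sigma(f)(\bl z)=f(\sigma^{-1}\bl z)$ together with $\varrho(a_i)=(\det a_i)^{c_i}$, the diagonalised computation in your step~(b) actually yields $\ell_i^{\,m_i-c_i}\mid f$, not $\ell_i^{c_i}\mid f$.  (Check $G=\mathbb Z/m$ acting on $\mathbb C$ by $z\mapsto\zeta z$: the $\varrho$-invariants with $c_1=j$ are $z^{m-j}\,\mathbb C[z^m]$.)  Stanley's original paper uses the action without the inverse, under which the exponent $c_i$ is indeed the right one; the paper has transplanted his formula into the other convention, so the stated $c_i$ should carry an extra inverse to be internally consistent.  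This does not affect any of the paper's downstream arguments---they only use that \emph{some} homogeneous $\ell_\varrho$ with $\sigma(\ell_\varrho)=\chi_\varrho(\sigma)\ell_\varrho$ generates the module---but it does mean that your attempt to ``match'' the exponent will fail unless you silently replace $c_i$ by $m_i-c_i$ (equivalently, define $c_i$ via $\varrho(a_i)=(\det a_i)^{-c_i}$).  Your instinct that this bookkeeping is the only real obstacle is correct.
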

We call $\ell_\varrho$ by \emph{generating polynomial} of $R^G_{\varrho}(\mb C[z_1,\ldots,z_d])$ over $\mb C[z_1,\ldots,z_d]^G.$ It follows that $\sigma (\ell_{\varrho}) = \chi_{\varrho}(\sigma) \ell_{\varrho}.$ The sign representation of a finite pseudoreflection group $G,$ $\sgn : G \to \mb C^*,$ defined by \bea\label{sign} \sgn(\sigma) = (\det(\sigma))^{-1},\eea is given by ${\rm sgn} (a_i) = \big(\det(a_i)\big)^{m_i-1}=(\det(a_i))^{-1}$, $i=1,\ldots,t,$ \cite[p. 139, Remark (1)]{MR460484} and it has the following property. 

\begin{cor}\cite[p. 616, Lemma]{MR117285}\label{Jac}
Let $H_1,\ldots, H_t$ denote the distinct reflecting hyperplanes associated to the group $G$ and let $m_1,\ldots, m_t$ be the orders of the corresponding cyclic subgroups $G_1,\ldots, G_t,$ respectively. Then for a non-zero constant $c$, 
\Bea
J_{\bl \theta} (\bl z) = c \prod_{i=1}^t \ell_i^{m_i -1 }(\bl z) = \ell_{\rm sgn}(\bl z),
\Eea
where $J_{\bl \theta}$ is the determinant of the complex jacobian matrix of the basic polynomial map $\bl \theta.$
Consequently, $J_{\bl \theta}$ is a basis of the module $R^G_{{\rm sgn}}(\mb C[z_1,\ldots,z_d])$ over the ring $\mb C[z_1,\ldots,z_d]^G$.
\end{cor}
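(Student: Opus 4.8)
The plan is to identify the Jacobian determinant $J_{\bl\theta}$ as a $\sgn$-relative invariant and then pin down its factorization by a degree count. First I would show that $J_{\bl\theta}$ transforms under $G$ by the sign character: for $\sigma\in G$, the chain rule applied to $\bl\theta\circ\sigma = \bl\theta$ (which holds because each $\theta_i$ is $G$-invariant, i.e.\ $\theta_i(\sigma\bl z)=\theta_i(\bl z)$) gives $(D\bl\theta)(\sigma\bl z)\,\sigma = (D\bl\theta)(\bl z)$, hence $J_{\bl\theta}(\sigma\bl z)\det(\sigma) = J_{\bl\theta}(\bl z)$, i.e.\ $\sigma(J_{\bl\theta}) = \det(\sigma)\,J_{\bl\theta} = \sgn(\sigma)^{-1}\,J_{\bl\theta}$. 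Wait — one must be careful with the convention in Equation \eqref{action}: $\sigma(f)(\bl z)=f(\sigma^{-1}\bl z)$, so $\sigma(J_{\bl\theta})(\bl z) = J_{\bl\theta}(\sigma^{-1}\bl z) = \det(\sigma^{-1})^{-1}J_{\bl\theta}(\bl z) = \det(\sigma)\,J_{\bl\theta}(\bl z) = \chi_{\sgn}(\sigma)\,J_{\bl\theta}(\bl z)$ by the definition \eqref{sign} of $\sgn$. Thus $J_{\bl\theta}\in R^G_{\sgn}(\mb C[z_1,\ldots,z_d])$, and it is a nonzero polynomial because $\theta_1,\ldots,\theta_d$ are algebraically independent (CST theorem), so the Jacobian does not vanish identically. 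Moreover $J_{\bl\theta}$ is homogeneous of degree $\sum_{i=1}^d(\deg\theta_i - 1)$.

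Next I would invoke Lemma \ref{gencz} with $\varrho = \sgn$: the module $R^G_{\sgn}(\mb C[z_1,\ldots,z_d])$ over $\mb C[z_1,\ldots,z_d]^G$ is generated by $\ell_{\sgn} = \prod_{i=1}^t \ell_i^{c_i}$, where $c_i$ is the least nonnegative integer with $\sgn(a_i) = (\det a_i)^{c_i}$. Since $a_i$ has order $m_i$ and $\det(a_i)$ is a primitive $m_i$-th root of unity (the nontrivial eigenvalue of the pseudoreflection $a_i$), and $\sgn(a_i) = (\det a_i)^{-1} = (\det a_i)^{m_i - 1}$, we get $c_i = m_i - 1$. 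Hence $\ell_{\sgn} = \prod_{i=1}^t \ell_i^{m_i-1}$. Therefore $J_{\bl\theta} = P(\bl z)\cdot\prod_{i=1}^t \ell_i^{m_i-1}$ for some $G$-invariant polynomial $P$.

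To conclude that $P$ is a nonzero constant, I would compare degrees. On one hand $\deg J_{\bl\theta} = \sum_{i=1}^d(d_i - 1)$ where $d_i = \deg\theta_i$; on the other hand $\deg\ell_{\sgn} = \sum_{i=1}^t(m_i - 1)$. A classical identity for pseudoreflection groups states $\sum_{i=1}^d(d_i - 1) = \#\{\text{pseudoreflections in } G\} = \sum_{i=1}^t(m_i - 1)$ (the number of pseudoreflections with reflecting hyperplane $H_i$ is $m_i - 1$, since $G_i$ is cyclic of order $m_i$ and its non-identity elements are exactly those pseudoreflections). Thus $\deg J_{\bl\theta} = \deg\ell_{\sgn}$, forcing $\deg P = 0$, so $P = c$ is a nonzero constant, giving $J_{\bl\theta} = c\prod_{i=1}^t\ell_i^{m_i-1} = \ell_{\sgn}$ (absorbing $c$ into the normalization of $\ell_{\sgn}$, or keeping it explicit as in the statement). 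The final sentence — that $J_{\bl\theta}$ is a basis of $R^G_{\sgn}$ over $\mb C[z_1,\ldots,z_d]^G$ — is then immediate from Lemma \ref{gencz}, since $J_{\bl\theta}$ is a nonzero constant multiple of the stated generator $\ell_{\sgn}$.

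The main obstacle is the degree identity $\sum(d_i-1) = \#\{\text{pseudoreflections}\}$; rather than reprove it I would cite the standard reference (e.g.\ the same source \cite{MR1890629} as for CST, or \cite{MR117285}). Everything else is the chain-rule computation for the transformation law and bookkeeping with the characters $\det(a_i)$, which is routine. Alternatively, one could avoid the global degree count entirely by a local argument: each $\ell_i$ divides $J_{\bl\theta}$ to order exactly $m_i - 1$ (blow up the generic point of $H_i$ and use that $\bl\theta$ factors through the $G_i$-quotient, which near $H_i$ is the $m_i$-fold cover $w\mapsto w^{m_i}$ in the transverse coordinate), and since the $\ell_i$ are pairwise non-proportional irreducibles this accounts for all of $J_{\bl\theta}$ once the degrees match — but the degree identity is the cleanest route and is what I would present.
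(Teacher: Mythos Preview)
The paper does not supply its own proof of Corollary~\ref{Jac}; it is stated with a citation to Steinberg \cite{MR117285} and used thereafter as a known fact. Your argument is precisely the classical one: establish via the chain rule that $J_{\bl\theta}$ is a relative invariant for a one-dimensional character, invoke Lemma~\ref{gencz} to write $J_{\bl\theta}$ as a $G$-invariant polynomial times $\prod_i \ell_i^{m_i-1}$, and then match degrees using the identity $\sum_i(\deg\theta_i-1)=\#\{\text{pseudoreflections in }G\}=\sum_i(m_i-1)$ to force the invariant factor to be a nonzero constant. This is exactly the proof found in the standard references you would cite.

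One small bookkeeping point: where you assert $\det(\sigma)\,J_{\bl\theta}=\chi_{\sgn}(\sigma)\,J_{\bl\theta}$, recall that the paper's convention \eqref{sign} is $\sgn(\sigma)=\det(\sigma)^{-1}$, so for genuinely complex reflections these differ. This reflects a (harmless) tension between the action \eqref{action} and the way the exponents $c_i$ are recorded in \eqref{ci}; the identity $J_{\bl\theta}=c\prod_i\ell_i^{m_i-1}$ is of course unaffected, but when writing this up you should reconcile which character ($\det$ or $\det^{-1}$) $J_{\bl\theta}$ transforms by under the stated action, so that the appeal to Lemma~\ref{gencz} is literally consistent.
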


\section{Decomposition of weighted $L^2$-spaces} In this section, we define the relative invariant subspaces of $L^2_\omega(\Omega)$ and $\mathbb A_\omega^2(\Omega)$ associated to $\varrho \in \widehat{G}_1.$ Then we show that those subspaces can be naturally identified to some weighted $L^2$-spaces and weighted Bergman spaces on $\bl \theta (\Omega),$ respectively. We also include some necessary details of the relative invariant subspaces of $L^2_\omega(\Omega)$ and $\mathbb A_\omega^2(\Omega)$. \subsection{Isotypic decomposition and projection operators} 

Given a continuous $G$-invariant weight function $\omega: \Omega \to (0,\infty),$ $L_\omega^2(\Omega)$ denotes the Hilbert space of Lebesgue measurable functions (equivalence classes of functions) on $\Omega$ which are square integrable with respect to the measure $\omega(\bl z)dV(\bl z),$ where $dV$ is the normalized Lebesgue measure on $\Omega.$ The weighted Bergman space $\mb A^2_\omega(\Omega)$ is the closed subspace consisting of holomorphic functions in $L_\omega^2(\Omega)$. For $\omega \equiv 1,$ $\mb A^2_\omega(\Omega)$ reduces to the Bergman space $\mb A^2(\Omega).$ We consider the natural action (sometimes called regular representation) of $G$ on $L_\omega^2(\Omega),$ given by $\sigma (f) (z) = f(\sigma^{-1} \cdot z).$ This action is a unitary representation of $G$ (as the weight $\omega$ is $G$-invariant) and consequently the space $L^2_\omega(\Omega)$ decomposes into isotypic components.

Now we define the projection operator onto the isotypic component associated to an irreducible representation $\varrho \in \widehat{G}$ in the decomposition of the regular representation on $L_\omega^2(\Omega)$ \cite[p. 24, Theorem 4.1]{MR2553682}. For $\varrho \in \widehat{G},$ the linear operator $\mb P_\varrho:L_\omega^2(\Omega) \to L_\omega^2(\Omega)$ is defined by
\Bea \mb P_\varrho \phi = \frac{\deg(\varrho)}{|G|}\sum_{\sigma \in G} \chi_\varrho(\sigma^{-1}) ~ \phi \circ \sigma^{-1}, \, \, \, \phi \in L^2_\omega(\Omega),\Eea where $\chi_\varrho$ denotes the character of $\varrho$ and $|G|$ denotes the order of the group $G.$
\begin{lem}
For each $\varrho \in \widehat{G}$, the operator $\mb P_\varrho:L_\omega^2(\Omega) \to L_\omega^2(\Omega)$ is an orthogonal projection.
\end{lem}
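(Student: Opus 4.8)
The claim is that for each irreducible representation $\varrho \in \widehat{G}$, the operator
\[
\mb P_\varrho \phi = \frac{\deg(\varrho)}{|G|}\sum_{\sigma \in G} \chi_\varrho(\sigma^{-1})\, \phi \circ \sigma^{-1}
\]
is an orthogonal projection on $L^2_\omega(\Omega)$. An operator is an orthogonal projection precisely when it is idempotent and self-adjoint, so the plan is to verify these two properties separately, relying on the standard representation-theoretic identities for characters.

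First I would record that the regular representation $\sigma \mapsto U_\sigma$, $U_\sigma f = f \circ \sigma^{-1}$, is unitary on $L^2_\omega(\Omega)$: since $\omega$ is $G$-invariant and $dV$ is (up to the normalizing constant) Lebesgue measure, the change of variables $\bl z \mapsto \sigma \bl z$ — a linear map of determinant of modulus one, as $\sigma$ has finite order — preserves the inner product. Consequently $U_\sigma^* = U_{\sigma^{-1}} = U_\sigma^{-1}$, and $\sigma \mapsto U_\sigma$ is a genuine unitary representation. In this language $\mb P_\varrho = \frac{\deg(\varrho)}{|G|}\sum_{\sigma}\overline{\chi_\varrho(\sigma)}\,U_\sigma$ (using $\chi_\varrho(\sigma^{-1}) = \overline{\chi_\varrho(\sigma)}$).

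For self-adjointness, I would take adjoints term by term: $\mb P_\varrho^* = \frac{\deg(\varrho)}{|G|}\sum_\sigma \chi_\varrho(\sigma)\,U_\sigma^* = \frac{\deg(\varrho)}{|G|}\sum_\sigma \chi_\varrho(\sigma)\,U_{\sigma^{-1}}$, and then re-index the sum by $\tau = \sigma^{-1}$, so that $\mb P_\varrho^* = \frac{\deg(\varrho)}{|G|}\sum_\tau \chi_\varrho(\tau^{-1})\,U_\tau = \mb P_\varrho$. For idempotency I would compute
\[
\mb P_\varrho^2 = \frac{\deg(\varrho)^2}{|G|^2}\sum_{\sigma,\tau} \chi_\varrho(\sigma^{-1})\chi_\varrho(\tau^{-1})\, U_\sigma U_\tau
= \frac{\deg(\varrho)^2}{|G|^2}\sum_{\rho}\Big(\sum_{\sigma}\chi_\varrho(\sigma^{-1})\chi_\varrho(\sigma^{-1}\rho)\Big) U_\rho,
\]
substituting $\rho = \sigma\tau$ (so $\tau = \sigma^{-1}\rho$). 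The inner sum is evaluated by the classical convolution identity for irreducible characters, $\sum_{\sigma \in G}\chi_\varrho(\sigma^{-1})\chi_\varrho(\sigma^{-1}\rho) = \frac{|G|}{\deg(\varrho)}\chi_\varrho(\rho^{-1})$ (a consequence of Schur orthogonality; it follows from integrating the matrix-coefficient orthogonality relations). Plugging this in collapses $\mb P_\varrho^2$ back to $\mb P_\varrho$. Since $\mb P_\varrho$ is bounded (a finite linear combination of unitaries), self-adjoint, and idempotent, it is an orthogonal projection.

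The only genuinely delicate point is the unitarity of $U_\sigma$, i.e., that the linear action of each $\sigma \in G$ preserves the measure $\omega\,dV$; this uses that $\sigma$ has finite order (hence $|\det\sigma| = 1$) together with the $G$-invariance of $\omega$, both of which are in force here. Everything else is the standard finite-group character bookkeeping, so I expect no real obstacle — the argument is essentially the projection-formula computation transported verbatim to the Hilbert space $L^2_\omega(\Omega)$.
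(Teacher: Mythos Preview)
Your argument is correct and follows the same route as the paper: both establish unitarity of the $G$-action on $L^2_\omega(\Omega)$ (the paper via the change-of-variables identity $\langle \sigma\cdot\phi,\sigma\cdot\psi\rangle=\langle\phi,\psi\rangle$, you via $U_\sigma^*=U_{\sigma^{-1}}$), deduce self-adjointness by reindexing, and obtain idempotence from the standard character identity (the paper simply cites Schur's Lemma, while you spell out the convolution computation). One small slip: after substituting $\rho=\sigma\tau$ you should have $\tau^{-1}=\rho^{-1}\sigma$, so the inner sum is $\sum_\sigma \chi_\varrho(\sigma^{-1})\chi_\varrho(\rho^{-1}\sigma)$ rather than $\chi_\varrho(\sigma^{-1}\rho)$; with that correction the quoted value $\frac{|G|}{\deg(\varrho)}\chi_\varrho(\rho^{-1})$ follows exactly as you say.
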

\begin{proof}
An application of Schur's Lemma implies that $\mb P_\varrho^2 = \mb P_\varrho$ \cite[p. 24, Theorem 4.1]{MR2553682}. 
We now show that $\mb P_\varrho$ is self-adjoint. Using change of variables formula, we get that for all $\phi, \psi \in L^2_\omega(\Omega) \text{~and~} \sigma \in G,$ \bea\label{inv}\inner{\sigma \cdot \phi}{\sigma \cdot \psi} = \inner{\phi}{\psi},\eea where $\inner{\cdot}{\cdot}$ denotes the inner product in $L^2_\omega(\Omega).$ For $\phi, \psi \in L^2_\omega(\Omega),$ we have
\Bea \inner{\mb P_\varrho^* \phi}{\psi} = \inner{\phi}{\mb P_\varrho\psi} &=& \inner{\phi}{\frac{\deg(\varrho)}{|G|}\sum_{\sigma \in G} \chi_\varrho(\sigma^{-1}) ~ \psi \circ \sigma^{-1}}\\ &=& \frac{\deg(\varrho)}{|G|}\sum_{\sigma \in G} \chi_\varrho(\sigma) \inner{\phi}{\psi \circ \sigma^{-1}} \\ &=&  \frac{\deg(\varrho)}{|G|}\sum_{\sigma \in G} \chi_\varrho(\sigma) \inner{\phi \circ \sigma}{\psi} \\&=& \inner{\mb P_\varrho \phi}{\psi}, \Eea
where the penultimate equality follows from Equation \eqref{inv}.
\end{proof}
Therefore, $L_\omega^2(\Omega)$ can be decomposed into an orthogonal direct sum as follows: \bea\label{l2ortho} L^2_\omega(\Omega) = \oplus_{\varrho \in \widehat{G}} \mb P_\varrho (L_\omega^2(\Omega)). \eea
\begin{rem}
The space $\mb A_\omega^2(\Omega)$ is clearly $G$-invariant and hence admits a decomposition as above. 
The linear map $\mb P_\varrho:\mb A_\omega^2(\Omega) \to \mb A_\omega^2(\Omega),$ defined by,
\Bea \mb P_\varrho \phi = \frac{\deg(\varrho)}{|G|}\sum_{\sigma \in G} \chi_\varrho(\sigma^{-1}) ~ \phi \circ \sigma^{-1}, \, \, \, \phi \in \mb A_\omega^2(\Omega) \Eea is the orthogonal projection onto the isotypic component associated to the irreducible representation $\varrho$ in the decomposition of the regular representation on $\mb A_\omega^2(\Omega)$ \cite[p. 24, Theorem 4.1]{MR2553682} \cite[Corollary 4.2]{MR4404033} and
\bea\label{decomp} \mb A^2_\omega(\Omega) = \oplus_{\varrho \in \widehat{G}} \mb P_\varrho (\mb A^2_\omega(\Omega)). \eea
\end{rem}
 For every one-dimensional representation $\varrho,$ we provide a characterization of the subspace $\mb P_\varrho(L^2_\omega(\Omega))$ in next lemma. Generalizing the notion of a relative invariant subspace, defined in Equation \eqref{invar}, we define the relative invariant subspace of $L_\omega^2(\Omega)$ associated to a one-dimensional representation $\varrho$ of $G,$ by
\Bea
R^G_{\varrho}(L^2_\omega(\Omega)) = \{f \in L_\omega^2(\Omega) : \sigma(f)  = \chi_\varrho(\sigma) f ~ {\rm for ~~ all~} \sigma \in G \}.
\Eea
\begin{lem}\label{equal1}
Let $G$ be a finite pseudoreflecion group and $\Omega$ be a $G$-invariant domain in $\C^d.$ Then for every $\varrho \in \widehat{G}_1,$ $\mb P_\varrho(L^2_\omega(\Omega)) = R^G_{\varrho}(L_\omega^2(\Omega)).$ 
\end{lem}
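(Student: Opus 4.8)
The plan is to prove the two inclusions $\mb P_\varrho(L^2_\omega(\Omega)) \subseteq R^G_\varrho(L^2_\omega(\Omega))$ and $R^G_\varrho(L^2_\omega(\Omega)) \subseteq \mb P_\varrho(L^2_\omega(\Omega))$ separately, working directly from the defining formula for $\mb P_\varrho$ together with the fact that $\chi_\varrho = \varrho$ is a genuine one-dimensional character (so $\chi_\varrho(\sigma\tau) = \chi_\varrho(\sigma)\chi_\varrho(\tau)$ and $|\chi_\varrho(\sigma)| = 1$, hence $\chi_\varrho(\sigma^{-1}) = \overline{\chi_\varrho(\sigma)} = \chi_\varrho(\sigma)^{-1}$).

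First I would show $R^G_\varrho(L^2_\omega(\Omega)) \subseteq \mb P_\varrho(L^2_\omega(\Omega))$. Take $f$ with $\sigma(f) = \chi_\varrho(\sigma) f$ for all $\sigma$. Then, using the action notation $\phi \circ \sigma^{-1} = \sigma(\phi)$,
\[
\mb P_\varrho f = \frac{\deg(\varrho)}{|G|}\sum_{\sigma \in G} \chi_\varrho(\sigma^{-1})\, \sigma(f) = \frac{1}{|G|}\sum_{\sigma \in G} \chi_\varrho(\sigma^{-1})\chi_\varrho(\sigma)\, f = f,
\]
since $\deg(\varrho) = 1$ and $\chi_\varrho(\sigma^{-1})\chi_\varrho(\sigma) = 1$. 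So $f = \mb P_\varrho f \in \mb P_\varrho(L^2_\omega(\Omega))$. (One must note that $\sigma(f) \in L^2_\omega(\Omega)$, which follows since the regular representation is unitary, as established before the lemma.)

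For the reverse inclusion, take any $\phi \in L^2_\omega(\Omega)$ and set $f = \mb P_\varrho \phi$; I must check $\tau(f) = \chi_\varrho(\tau) f$ for every $\tau \in G$. Compute
\[
\tau(\mb P_\varrho \phi) = \frac{1}{|G|}\sum_{\sigma \in G} \chi_\varrho(\sigma^{-1})\, \tau\sigma(\phi).
\]
Substituting $\rho = \tau\sigma$ (so $\sigma = \tau^{-1}\rho$, $\sigma^{-1} = \rho^{-1}\tau$), and using multiplicativity of the character, $\chi_\varrho(\sigma^{-1}) = \chi_\varrho(\rho^{-1})\chi_\varrho(\tau)$; as $\sigma$ ranges over $G$ so does $\rho$, giving
\[
\tau(\mb P_\varrho \phi) = \chi_\varrho(\tau)\,\frac{1}{|G|}\sum_{\rho \in G}\chi_\varrho(\rho^{-1})\,\rho(\phi) = \chi_\varrho(\tau)\, \mb P_\varrho \phi.
\]
Hence $\mb P_\varrho \phi \in R^G_\varrho(L^2_\omega(\Omega))$, completing the proof.

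There is no real obstacle here; the only points requiring a little care are the bookkeeping in the index substitution $\rho = \tau\sigma$ (and the identity $\tau(\sigma(\phi)) = (\tau\sigma)(\phi)$, which holds because the action in Equation \eqref{action} uses $\sigma^{-1}$ and is therefore a genuine left action), and the observation that one-dimensionality forces $\deg(\varrho) = 1$ so the normalizing constant collapses cleanly. Everything else is immediate from the definitions, so the lemma is essentially a formal computation valid for any one-dimensional $\varrho \in \widehat{G}_1$, needing neither the pseudoreflection hypothesis nor analyticity — though stating it in this generality is harmless.
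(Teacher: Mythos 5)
Your proof is correct and is essentially identical to the paper's: both inclusions are established by the same direct computations, with the same index substitution $\eta=\tau\sigma$ (your $\rho=\tau\sigma$) for one direction and the cancellation $\chi_\varrho(\sigma^{-1})\chi_\varrho(\sigma)=1$ for the other. Your added remarks on why the action is a genuine left action and why $\deg(\varrho)=1$ collapses the constant are correct and merely make explicit what the paper leaves implicit.
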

\begin{proof}
For any $\tau \in G,$ $\phi \in \mb P_\varrho(L^2_\omega(\Omega)),$ \Bea \tau(\phi) = \tau(\mb P_\varrho \phi) &=& \frac{1}{|G|} \sum_{\sigma \in G} \chi_\varrho(\sigma^{-1}) ~ \phi \circ \sigma^{-1}\tau^{-1} \\ &=& \frac{1}{|G|} \sum_{\eta \in G} \chi_\varrho(\eta^{-1}\tau ) ~ \phi \circ \eta^{-1} \,\, (\text{~taking~}\eta = \tau\sigma)\\ &=& \chi_\varrho(\tau) ~ \phi. \Eea
Conversely, for $\phi \in L^2_\omega(\Omega)$ with the property $\sigma(\phi) = \chi_\varrho(\sigma) ~ \phi,$ we have \Bea \mb P_\varrho (\phi) = \frac{1}{|G|} \sum_{\sigma \in G}  \chi_\varrho(\sigma^{-1}) ~ \phi \circ \sigma^{-1} &=& \frac{1}{|G|} \sum_{\sigma \in G}  \chi_\varrho(\sigma^{-1}) \chi_\varrho(\sigma) ~ \phi  \\ &=& \frac{1}{|G|} \sum_{\sigma \in G} \phi = \phi. \Eea
\end{proof}
The elements of $\mb P_\varrho(L^2_\omega(\Omega))$ are called $\varrho$-invariant elements of $L^2_\omega(\Omega)$. For the trivial representation, $\tr: G \to \mb C^*$ such that \bea\label{trivial}\tr(\sigma) =1,\,\, \sigma \in G,\eea we have $\mb P_{\tr}(L^2_\omega(\Omega)) = \{ f \in L^2_\omega(\Omega): \sigma(f) = f \text{~ for all ~} \sigma \in G \}.$ We refer to the elements of $\mb P_{\tr}(L_\omega^2(\Omega))$ as $G$-invariant elements of $L^2_\omega(\Omega)$. 

Suppose that $\ell_\varrho  \w{\phi} \in L^2_\omega(\Omega)$ for some $G$-invariant $\w{\phi},$ then $\sigma(\ell_\varrho  \w{\phi}) = \chi_\varrho(\sigma) \ell_\varrho  \w{\phi},$ that is, $\ell_\varrho  \w{\phi} \in R^G_{\varrho}(L^2_\omega(\Omega)),$ consequently, $\ell_\varrho  \w{\phi} \in \mb P_\varrho(L^2_\omega(\Omega))$ by Lemma \ref{equal1}.
The following lemma ensures that $\mb P_\varrho(L^2_\omega(\Omega)) = \{\phi \in L_\omega^2(\Omega) : \phi = \ell_\varrho  \w{\phi}, \text{~where~} \w{\phi} \text{~is~} G\text{-invariant} \}.$



\begin{lem}\label{quo} Let $\varrho \in \w{G}_1$ and $f \in \mb P_\varrho(L^2_\omega(\Omega)).$ Then $f = \ell_\varrho~\widehat{f},$ where $\ell_\varrho$ is a generating polynomial of $R^G_{\varrho}(\mb C[z_1,\ldots,z_d])$ over the ring $\mb C[z_1,\ldots,z_d]^G$ and $\widehat{f}$ is $G$-invariant.
\end{lem}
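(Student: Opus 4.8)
The plan is to build $\widehat f$ simply by dividing $f$ by $\ell_\varrho$ and then to check that this division is legitimate in the $L^2$ (measurable) category. First I would record the data. By Lemma \ref{equal1}, the hypothesis $f \in \mb P_\varrho(L^2_\omega(\Omega))$ means precisely that $\sigma(f) = \chi_\varrho(\sigma)\, f$ in $L^2_\omega(\Omega)$ for every $\sigma \in G$. By Lemma \ref{gencz}, the generating polynomial $\ell_\varrho = \prod_{i=1}^t \ell_i^{c_i}$ is a nonzero homogeneous polynomial, and it satisfies $\sigma(\ell_\varrho) = \chi_\varrho(\sigma)\,\ell_\varrho$ identically on $\mb C^d$. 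Its zero set $Z = \{\bl z \in \mb C^d : \ell_\varrho(\bl z) = 0\}$ is a proper complex-algebraic subvariety, hence has Lebesgue measure zero, and it is $G$-invariant, since $\ell_\varrho(\sigma^{-1}\bl z) = \chi_\varrho(\sigma)\,\ell_\varrho(\bl z)$ and each $\chi_\varrho(\sigma)$ is a nonzero root of unity.

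Next I would define $\widehat f : \Omega \to \mb C$ by $\widehat f(\bl z) = f(\bl z)/\ell_\varrho(\bl z)$ for $\bl z \in \Omega \setminus Z$ and $\widehat f(\bl z) = 0$ for $\bl z \in \Omega \cap Z$. Because $Z$ is Lebesgue-null, $\widehat f$ is a well-defined measurable function on $\Omega$ and $f = \ell_\varrho\,\widehat f$ almost everywhere, i.e.\ as elements of $L^2_\omega(\Omega)$. To see that $\widehat f$ is $G$-invariant, I would first note that each $\sigma \in G$ satisfies $|\det\sigma| = 1$ (it has finite order), so the linear automorphism $\bl z \mapsto \sigma^{-1}\bl z$ of $\Omega$ is measure-preserving; consequently I may fix a single $G$-invariant null set $N \supseteq Z$ such that $f(\sigma^{-1}\bl z) = \chi_\varrho(\sigma)\, f(\bl z)$ for all $\sigma \in G$ and all $\bl z \in \Omega \setminus N$ (take the union of the $G$-translates of the finitely many exceptional sets). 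Then, for $\bl z \in \Omega \setminus N$,
\[
\widehat f(\sigma^{-1}\bl z) = \frac{f(\sigma^{-1}\bl z)}{\ell_\varrho(\sigma^{-1}\bl z)} = \frac{\chi_\varrho(\sigma)\, f(\bl z)}{\chi_\varrho(\sigma)\, \ell_\varrho(\bl z)} = \widehat f(\bl z),
\]
using $\chi_\varrho(\sigma) \ne 0$; hence $\sigma(\widehat f) = \widehat f$, so $\widehat f$ is $G$-invariant, which finishes the argument.

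The statement is soft precisely because we work in the measurable category: the only potential difficulty, division by $\ell_\varrho$ on its vanishing locus, is harmless because that locus is Lebesgue-null, and beyond the explicit form and transformation law of $\ell_\varrho$ (Lemma \ref{gencz}) no further input from invariant theory is needed. By contrast, the analogous factorisation in the holomorphic category — the one underlying the isometric identification of $R^G_\varrho(\mb A^2_\omega(\Omega))$ with $\mb A^2_{\omega_\varrho}(\bl\theta(\Omega))$ — is genuinely harder, as it requires showing $f/\ell_\varrho$ extends holomorphically across $Z$ via the Riemann removable-singularity theorem together with divisibility of $\varrho$-invariant holomorphic functions by $\ell_\varrho$. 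So the only, minor, obstacle here is the bookkeeping with almost-everywhere identities and the choice of a $G$-invariant exceptional set, as carried out above.
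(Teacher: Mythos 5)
Your proposal is correct and follows essentially the same route as the paper: divide $f$ by $\ell_\varrho$ off its (Lebesgue-null) zero set and observe that the transformation laws $\sigma(f)=\chi_\varrho(\sigma)f$ and $\sigma(\ell_\varrho)=\chi_\varrho(\sigma)\ell_\varrho$ cancel, making the quotient $G$-invariant. The paper's proof is just a terser version of this; your additional care with the $G$-invariant exceptional null set is a detail the paper leaves implicit.
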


\begin{proof} Since $\ell_\varrho$ vanishes only on a set of measure zero, we can write any $f \in \mb P_\varrho(L^2_\omega(\Omega)) $
as $$ f = \widehat{f} \ell_\varrho$$ where $\widehat{f} = \frac{f}{\ell_\varrho}.$ Clearly, $\widehat{f}$ is $G$-invariant.


\end{proof}
We recall an analytic version of Chevalley-Shephard-Todd theorem which allows us to state a few additional properties of the elements of the weighted Bergman space $\mb A^2_\omega(\Omega)$. First note that $\C[z_1,\ldots,z_d]$ is a free $\C[z_1,\ldots,z_d]^G$ module of rank $|G|$ \cite[Theorem 1, p. 110]{MR1890629}. Further, one can choose a  basis of $\C[z_1,\ldots,z_d]$
consisting of homogeneous polynomials. We choose the basis in the following manner. For each $\varrho\in \widehat G$,  $\mathbb{P}_{\varrho}(\C[z_1,\ldots,z_d])$ is a free module over $\C[z_1,\ldots,z_d]^G$ of rank $\deg (\varrho)^2$ \cite[Proposition II.5.3., p.28]{Pan}. Clearly, $\C[z_1,\ldots,z_d] = \oplus_{\varrho \in \widehat{G}} \mb P_\varrho (\C[z_1,\ldots,z_d]),$ where the direct sum is orthogonal direct sum borrowed from the Hilbert space structure of $\mb A^2_\omega(\Omega)$. Then we can choose a basis $\{\ell_{\varrho,i}: 1 \leq i \leq \deg(\varrho)^2\}$ of $\mathbb{P}_{\varrho}(\C[z_1,\ldots,z_d])$ over $\C[z_1,\ldots,z_d]^G$ for each $\varrho \in \widehat G$ such that together they form a basis $\{\ell_{\varrho,i}: \varrho \in \widehat{G} \text{~and~} 1 \leq i \leq \deg(\varrho)^2\}$ of $\C[z_1,\ldots,z_d]$ over $\C[z_1,\ldots,z_d]^G$. We work with such a choice of basis for the rest of discussion.
\begin{thm}[Analytic CST]\cite[p. 12, Theorem 3.12]{MR4404033}\label{acst}
Let $G$ be a finite group generated by pseudoreflections on $\mb C^d$ and $\Omega \subseteq \C^d$ be a $G$-space. For every holomorphic function $f$ on $\Omega$, there exist unique $G$-invariant holomorphic functions $\{f_{\varrho,i}: 1 \leq i \leq \deg(\varrho)^2\}_{\varrho \in \widehat{G}}$ such that $$f = \sum_{\varrho \in \widehat{G}} \sum_{i=1}^{\deg(\varrho)^2} f_{\varrho,i} \ell_{\varrho,i}.$$
\end{thm}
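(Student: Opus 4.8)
The plan is to upgrade the algebraic Chevalley–Shephard–Todd decomposition — that $\C[z_1,\ldots,z_d]$ is a free $\C[z_1,\ldots,z_d]^G$-module with the homogeneous basis $\{\ell_{\varrho,i}\}$ fixed above — to the analytic category by showing the coefficient functions produced on polynomials extend holomorphically in a $G$-invariant way, and then arguing uniqueness. First I would record the algebraic input: for each $\bl z$ in a suitable set, the $|G|\times|G|$ ``evaluation'' relations expressing an arbitrary holomorphic $f$ in terms of $\{\ell_{\varrho,i}\}$ over the invariants can be solved by Cramer's rule, so the candidate coefficients $f_{\varrho,i}$ are, a priori, meromorphic functions on $\Omega$ whose denominators involve only a fixed polynomial (a suitable ``discriminant-type'' determinant built from the $\ell_{\varrho,i}$ and the basic polynomials $\theta_j$). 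The key point is to see this denominator divides the numerator so that each $f_{\varrho,i}$ is in fact holomorphic.

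The cleanest route I would take is via averaging over $G$. Given holomorphic $f$ on $\Omega$, apply the projections $\mb P_\varrho$ (which preserve holomorphy, being finite combinations of the $G$-action) to reduce to the case where $f$ lies in a single isotypic component $\mb P_\varrho(\O(\Omega))$. For one-dimensional $\varrho$, Lemma \ref{quo} already gives $f = \ell_\varrho \widehat f$ with $\widehat f$ $G$-invariant and holomorphic (since $\ell_\varrho$ divides $f$ as holomorphic functions — $\ell_\varrho$ vanishes to the right order along each reflecting hyperplane by the $\varrho$-invariance, and the quotient is holomorphic by the Riemann removable singularity / division argument), and $G$-invariant holomorphic functions are exactly $\widehat u \circ \bl\theta$ for $\widehat u$ holomorphic on $\bl\theta(\Omega)$ (Proposition \ref{properholo} and the Remark after it). For higher-dimensional $\varrho$ one expands $f$ against the chosen module basis $\{\ell_{\varrho,i}\}_{i=1}^{\deg(\varrho)^2}$: the matrix $\big(\sigma(\ell_{\varrho,j})\big)$ over a transversal of $G$ has determinant a nonzero polynomial $\Delta_\varrho$ (nonvanishing off a thin analytic set, using that $\{\ell_{\varrho,i}\}$ is a genuine free basis), Cramer's rule exhibits each $f_{\varrho,i}$ as a holomorphic function divided by $\Delta_\varrho$ off that set, and $G$-invariance of the $f_{\varrho,i}$ follows because the ratio is unchanged by the $G$-action (both numerator and denominator transform by the same determinantal factor). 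Then removability extends $f_{\varrho,i}$ across the zero set of $\Delta_\varrho$.

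For uniqueness: suppose $\sum_{\varrho,i} g_{\varrho,i}\ell_{\varrho,i} = 0$ with all $g_{\varrho,i}$ $G$-invariant holomorphic. Write $g_{\varrho,i} = \widehat g_{\varrho,i}\circ\bl\theta$ and pull back; equivalently, evaluate the relation at a point $\bl z_0$ and at all its $G$-translates $\sigma\bl z_0$, obtaining a linear system whose coefficient matrix is the invertible (for generic $\bl z_0$) matrix $\big(\ell_{\varrho,i}(\sigma\bl z_0)\big)$ — here one uses that $\{\ell_{\varrho,i}\}$ being a free module basis forces this matrix to be nonsingular on a dense open set — forcing $g_{\varrho,i}(\bl z_0)=0$ there, hence everywhere by continuity/holomorphy.

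The main obstacle I anticipate is the division/removable-singularity step: one must show that the only possible poles of the Cramer-rule coefficients are removable, i.e. that $\Delta_\varrho$ divides the relevant numerators in the ring of holomorphic functions on $\Omega$. This is exactly where the structure of $G$ as a \emph{pseudoreflection} group is essential — the determinant $\Delta_\varrho$ factors (up to a nonzero constant and invariant factors) into products of the linear forms $L_\sigma$ cutting out the reflecting hyperplanes, with the codimension-one geometry controlled by Lemma \ref{gencz} and Corollary \ref{Jac}, so vanishing of $\Delta_\varrho$ along a hyperplane forces a matching vanishing of the numerator by the $\varrho$-equivariance of $f$. Handling this carefully along each reflecting hyperplane (and checking nothing bad happens in higher codimension, where removability is automatic) is the technical heart of the argument; the rest is bookkeeping with the isotypic projections and Proposition \ref{properholo}.
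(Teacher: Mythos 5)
First, note that the paper does not prove this statement at all: Theorem \ref{acst} is imported verbatim from \cite[Theorem 3.12]{MR4404033}, so there is no internal proof to compare your argument against; what follows assesses your proposal on its own terms and against the argument in that reference, which likewise proceeds by evaluating along $G$-orbits and solving the resulting $|G|\times|G|$ linear system over the free module basis.

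Your skeleton (isotypic projection, Cramer's rule against the matrix $\big(\ell_{\varrho,i}(\sigma^{-1}\bl z)\big)_{\sigma,(\varrho,i)}$, generic invertibility from freeness of the basis, uniqueness by evaluation on generic orbits) is the right shape. But there is a genuine gap at exactly the point you flag as the ``technical heart'': you never actually prove that the determinant $\Delta_\varrho$ divides the Cramer numerators in $\O(\Omega)$, and the surrounding remarks suggest a misdiagnosis of where the difficulty sits. The zero set of $\Delta_\varrho$ is a \emph{hypersurface} (a union of reflecting hyperplanes intersected with $\Omega$), i.e.\ codimension one, so Riemann's extension theorem gives you nothing for free there --- removability requires either local boundedness of the quotient near the hypersurface or an order-of-vanishing count for the numerator, and neither is supplied. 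The statement that ``nothing bad happens in higher codimension, where removability is automatic'' is therefore beside the point: all of the work is in codimension one. The correct closing move is a local normal-form computation along each reflecting hyperplane $H_j$: the cyclic group $G_j$ fixing $H_j$ acts in suitable coordinates by $(w_1,w')\mapsto(\zeta w_1,w')$, a $\chi_\varrho$-equivariant holomorphic function has $w_1$-Taylor coefficients supported on exponents $\equiv c_j \pmod{m_j}$ and is hence divisible by $\ell_j^{c_j}$, and one must then match the total order of vanishing of the numerator determinant against that of $\Delta_\varrho$ along $H_j$ (with Corollary \ref{Jac} controlling the latter). This is provable, but it is the entire analytic content of the theorem beyond the algebraic CST, and your proposal identifies it without carrying it out.

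Two smaller but real issues. First, after projecting onto a single isotypic component you propose to solve for $\deg(\varrho)^2$ unknowns from the $|G|$ equations indexed by a ``transversal of $G$''; the counts do not match unless $\deg(\varrho)=1$, so you must either solve the full $|G|\times|G|$ system for the unprojected $f$ and read off the isotypic pieces afterwards, or justify selecting $\deg(\varrho)^2$ rows that remain generically independent. Second, your one-dimensional case leans on Lemma \ref{quo}, which in this paper is an $L^2$ statement whose proof merely divides by $\ell_\varrho$ as a measurable function; for the holomorphic divisibility you need here, you must again run the local argument along the reflecting hyperplanes described above, so the appeal to that lemma does not discharge the obligation.
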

\begin{rem}\rm
With such a choice of basis, we have the following:
\begin{enumerate}
    \item For any element $f$ in $\mb A_\omega^2(\Omega),$ there exist unique $G$-invariant holomorphic functions $\{f_{\varrho,i}: 1 \leq i \leq \deg(\varrho)^2\}_{\varrho \in \widehat{G}}$ such that $f = \sum_{\varrho \in \widehat{G}} \sum_{i=1}^{\deg(\varrho)^2} f_{\varrho,i} \ell_{\varrho,i}$ and $\mb P_\varrho f = \sum_{i=1}^{\deg(\varrho)^2} f_{\varrho,i} \ell_{\varrho,i}$ for every $\varrho \in \widehat{G}.$
    \item For simplicity, we write $\ell_\varrho$ in the place of $\ell_{\varrho,1}$ if $\varrho$ is in $\widehat{G}_1.$ For every $\psi \in \mb P_\varrho(\mb A_\omega^2(\Omega)),$ there exists a unique $G$-invariant holomorphic function $\widehat{\psi}$ such that $\psi = \ell_\varrho~ \widehat{\psi}$ \cite[Lemma 3.1, Remark 3.3]{kag}. A description of $\ell_\varrho$ is given in  Lemma \ref{gencz}.
    \item  For $\varrho \in \widehat{G}_1,$ any other choice of basis of $\mathbb{P}_{\varrho}(\C[z_1,\ldots,z_d])$ as a free module over $\C[z_1,\ldots,z_d]^G$ is a constant multiple of $\ell_\varrho.$
\end{enumerate}
\end{rem}



\sf{Weighted $L^2$-spaces on $\bl\theta(\Omega)$:} 
Consider a $G$-invariant continuous weight function $\omega : \Omega \to (0,\infty).$ We write $\omega =\widetilde{\omega}\circ \bl \theta$ for a continuous map $\widetilde{\omega} : \bl \theta(\Omega) \to (0,\infty).$  For $\varrho \in\widehat{G}_1,$ we set $$\omega_\varrho(\bl \theta(\bl z)) = \frac{|\ell_\varrho(\bl z)|^2}{|J_{\bl \theta}(\bl z)|^2} ~\widetilde{\omega}(\bl \theta(\bl z)),$$
and  define the Hilbert space $L^2_{\omega_\varrho}(\bl \theta(\Omega))$ as following: $$L^2_{\omega_\varrho}(\bl \theta(\Omega)) = \{ \phi :\bl \theta(\Omega) \to \mb C ~| \int_{\bl \theta(\Omega)} |\phi(\bl u)|^2 \omega_\varrho(\bl u) dV(\bl u) < \infty \}.$$ 

\begin{lem}
For every $\varrho \in \widehat{G}_1$, the linear operator $\Gamma_\varrho: L^2_{\omega_\varrho}(\bl \theta(\Omega)) \to \mb P_\varrho(L_\omega^2(\Omega)),$ defined by,
\bea\label{gam} \Gamma_\varrho \phi = \frac{1}{\sqrt{|G|}}  (\phi \circ \bl \theta)\ell_\varrho,\,\,\,\, \phi \in L_{\omega_\varrho}^2(\bl \theta(\Omega)), \eea
is unitary.
\end{lem}
\begin{proof}
The polynomial $\ell_\varrho$ is in $\mb P_\varrho(\mb A^2_\omega(\Omega)) \subseteq\mb P_\varrho(L^2_\omega(\Omega))$, that is, $\sigma(\ell_\varrho)  = \chi_\varrho(\sigma) \ell_\varrho$ for all $\sigma \in G$ from Lemma \ref{equal1}. From the explicit expression of $\ell_\varrho$ in Lemma \ref{gencz}, we conclude that there exists a unique positive integer, say $m_\varrho,$ such that $\ell_\varrho^{m_\varrho}$ is $G$-invariant. Using CST theorem, $\ell_\varrho^{m_\varrho}(\bl z) = f_\varrho(\bl \theta(\bl z))$ for a unique polynomial $f_\varrho$. The function $\omega_\varrho$ can be written as \bea\label{weight1}\omega_\varrho(\bl u) = \frac{|f_\varrho(\bl u)|^\frac{2}{m_\varrho}}{|f_\sgn(\bl u)|^\frac{2}{m_{\rm sgn}}} \widetilde{\omega}(\bl u), \,\, \bl u \in \bl \theta(\Omega) \setminus N,\eea where $N=\{\bl \theta(\bl z) \in \mb C^d: J_{\bl \theta}(\bl z)=0\}.$ Then $\Gamma_\varrho$ is an isometry because for $\psi \in L^2_{\omega_\varrho}(\bl \theta(\Omega)),$
\bea\label{iso} \nonumber \norm{\Gamma_\varrho \psi}^2 &=& \frac{1}{|G|} \int_\Omega |\psi \circ \bl \theta(\bl z)|^2 |\ell_\varrho(\bl z)|^2 \omega(\bl z) dV(\bl z) \\ \nonumber &=&  \frac{1}{|G|} \int_{\Omega \setminus \bl \theta^{-1}(N)} |\psi \circ \bl \theta(\bl z)|^2 \frac{|f_\varrho(\bl \theta(\bl z))|^\frac{2}{m_\varrho}}{|f_{\rm sgn}(\bl \theta(\bl z))|^\frac{2}{m_{\rm sgn}}} |J_{\bl \theta}(\bl z)|^2 \widetilde{\omega}(\bl \theta(\bl z)) dV(\bl z) \\ &=& \int_{\bl \theta(\Omega) \setminus N} |\psi(\bl u)|^2 \omega_\varrho(\bl u) dV(\bl u) = \norm{\psi}^2, \eea the last equality follows since the set $N$ has Lebesgue measure $0.$

Let $\phi$ be in $\mb P_\varrho(L^2_\omega(\Omega)).$ Then there exists a $\widehat{\phi}$ such that $\phi = \ell_\varrho ~ \widehat{\phi} \circ \bl \theta$ (cf. Lemma \ref{quo}).  We are to show that $\widehat{\phi} \in L^2_{\omega_\varrho}(\bl \theta(\Omega)).$ From Equation \eqref{iso}, we get the norm of $\widehat{\phi}$ is equal to the norm of $\phi.$ Since the later is finite, so is the norm of $\widehat{\phi}$.
\end{proof}
Let $\m O(\Omega)$ be the set of all holomorphic functions on $\Omega.$ We denote the weighted Bergman space with weight function $\omega_\varrho$ by $$\mb A_{\omega_\varrho}^2(\bl \theta(\Omega)) = \{ \phi \in \m O(\bl \theta(\Omega)) : \int_{\bl \theta(\Omega)} |\phi(\bl u)|^2 \omega_\varrho(\bl u) dV(\bl u) < \infty \}.$$ The restriction operator $\Gamma_\varrho : \mb A_{\omega_\varrho}^2(\bl \theta(\Omega)) \to \mb P_\varrho(\mb A^2_\omega(\Omega))$ is surjective. Thus the weighted Bergman space $\mb A_{\omega_\varrho}^2(\bl \theta(\Omega))$ is isometrically isomorphic to $\mb P_\varrho (\mb A^2_\omega(\Omega))$ via the unitary operator $\Gamma_\varrho$ \cite{kag}. This enables us to make the following observations which are essential.
\begin{rem}\rm
\begin{enumerate}
\item For each $\varrho \in \widehat{G}_1,$ we have the following identity involving the reproducing kernel ${\mathfrak K}_{\omega_\varrho}$ of $\mb A_{\omega_\varrho}^2(\bl \theta(\Omega))$ and the reproducing kernel $\m B_\omega^{(\varrho)}$ of $\mb P_\varrho(\mb A^2_\omega(\Omega))$ \cite[p. 11, Equation 3.3]{kag}:
\Bea
\m B_\omega^{(\varrho)}(\bl z, \bl w) = \frac{1}{|G|}\ell_\varrho(\bl z) {\mathfrak K}_{\omega_\varrho}\big(\bl\theta(\bl z), \bl\theta(\bl w)\big)\ov{\ell_\varrho(\bl w)}.
\Eea
For a fixed $\bl w \in \Omega,$ we have \bea\label{ne}\nonumber\ov{\ell_\varrho(\bl w)}\Gamma_\varrho\big({\mathfrak K}_{\omega_\varrho}(\cdot, \bl\theta(\bl w))\big)(\bl z) &=& \frac{1}{\sqrt{|G|}}\ell_\varrho(\bl z) {\mathfrak K}_{\omega_\varrho}(\bl\theta(\bl z), \bl\theta(\bl w)\big)\ov{\ell_\varrho(\bl w)}\\&=& \sqrt{|G|}\m B_\omega^{(\varrho)}(\bl z, \bl w).\eea
\item Recall that the polynomial $\ell_\sgn=J_{\bl \theta}$ (cf. Corollary \ref{Jac}). Therefore, for $\widetilde{\omega} \equiv 1$ in Equation \eqref{weight1}, we get  $\omega_\sgn \equiv 1$ and thus $\mb A_{\omega_\sgn}^2(\bl \theta(\Omega))$ coincides with the Bergman space $\mb A^2(\bl \theta(\Omega)).$ Consequently, $\mb A^2(\bl \theta(\Omega))$ is isometrically isomorphic to $\mb P_\sgn (\mb A^2(\Omega))$ via the unitary operator $\Gamma_\sgn.$
\end{enumerate}
\end{rem}

\section{Toeplitz operators and the Weighted Bergman spaces}\label{mainsec}
In this section, we establish a number of identities involving Toeplitz operators on $\mb A_\omega^2(\Omega)$ and Toeplitz operators on $\mb A_{\omega_\varrho}^2(\bl \theta(\Omega))$ which enables us to study algebraic properties of Toeplitz operators on $\mb A_{\omega_\varrho}^2(\bl \theta(\Omega)).$ 
We start with two very useful lemmas. 
\begin{lem}\label{equi}
For every one-dimensional $\varrho \in \widehat{G}_1,$ the following diagram commutes:
\[ \begin{tikzcd}
L^2_{\omega_\varrho}(\bl \theta(\Omega)) \arrow{r}{P_{\omega_\varrho}} \arrow[swap]{d}{\Gamma_\varrho} & \mb A^2_{\omega_\varrho}(\bl \theta(\Omega)) \arrow{d}{\Gamma_\varrho} \\%
\mb P_\varrho(L_\omega^2(\Omega)) \arrow{r}{\widetilde{P}_{\omega,\varrho}}& \mb P_\varrho(\mb A_\omega^2(\Omega))
\end{tikzcd}
\]
where $P_{\omega_\varrho}$ and $\widetilde{P}_{\omega,\varrho}$ are the associated orthogonal projections and $\Gamma_\varrho$ is as defined in Equation \eqref{gam}.
\end{lem}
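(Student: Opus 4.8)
The plan is to derive the commutativity of the square from a single elementary principle: a unitary operator that carries a closed subspace of its domain onto a closed subspace of its range intertwines the orthogonal projections onto those two subspaces. No manipulation of reproducing kernels or of the explicit formula for $\omega_\varrho$ will be needed beyond what the preceding lemmas already supply.

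First I would collect the two ingredients already available. By the lemma immediately preceding, $\Gamma_\varrho\colon L^2_{\omega_\varrho}(\bl\theta(\Omega)) \to \mb P_\varrho(L^2_\omega(\Omega))$ is unitary. Moreover, since $\Gamma_\varrho \phi = \tfrac{1}{\sqrt{|G|}}(\phi\circ\bl\theta)\,\ell_\varrho$ is holomorphic on $\Omega$ whenever $\phi$ is holomorphic on $\bl\theta(\Omega)$, the operator $\Gamma_\varrho$ maps $\mb A^2_{\omega_\varrho}(\bl\theta(\Omega))$ into $\mb P_\varrho(\mb A^2_\omega(\Omega))$, and this restriction is surjective, as recorded just before the statement of the lemma (cf.\ \cite{kag}). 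Being the restriction of a unitary to a closed subspace which it maps onto a closed subspace, $\Gamma_\varrho$ therefore restricts to a unitary isomorphism of $\mb A^2_{\omega_\varrho}(\bl\theta(\Omega))$ onto $\mb P_\varrho(\mb A^2_\omega(\Omega))$.

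Next I would state and prove the general principle. Let $U\colon H_1\to H_2$ be unitary and let $M\subseteq H_1$ be a closed subspace with $U(M)=N$, a closed subspace of $H_2$; write $P_M$ and $P_N$ for the corresponding orthogonal projections. Since $U$ preserves inner products, $U(M^\perp)=N^\perp$. For $x\in H_1$, decompose $x=m+m'$ with $m\in M$ and $m'\in M^\perp$; then $Ux=Um+Um'$ with $Um\in N$ and $Um'\in N^\perp$, so $P_N Ux = Um = U P_M x$. Hence $U P_M = P_N U$.

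Finally I would apply this with $U=\Gamma_\varrho$, $H_1=L^2_{\omega_\varrho}(\bl\theta(\Omega))$, $H_2=\mb P_\varrho(L^2_\omega(\Omega))$, $M=\mb A^2_{\omega_\varrho}(\bl\theta(\Omega))$, $N=\mb P_\varrho(\mb A^2_\omega(\Omega))$, so that $P_M=P_{\omega_\varrho}$ and $P_N=\widetilde{P}_{\omega,\varrho}$. This yields $\Gamma_\varrho\circ P_{\omega_\varrho}=\widetilde{P}_{\omega,\varrho}\circ\Gamma_\varrho$, which is exactly the commutativity of the displayed square. I do not anticipate a genuine obstacle here; the only point requiring a word of care is that $\Gamma_\varrho$ maps $\mb A^2_{\omega_\varrho}(\bl\theta(\Omega))$ \emph{onto}, and not merely into, $\mb P_\varrho(\mb A^2_\omega(\Omega))$, and that is precisely the surjectivity statement quoted from \cite{kag}; the remainder is the elementary Hilbert-space fact proved above.
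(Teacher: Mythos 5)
Your proof is correct, but it takes a genuinely different route from the paper. The paper proves the lemma by a direct reproducing-kernel computation: for $f \in L^2_{\omega_\varrho}(\bl\theta(\Omega))$ it expands $(\Gamma_\varrho P_{\omega_\varrho}f)(\bl z)$ as $\tfrac{1}{\sqrt{|G|}}\ell_\varrho(\bl z)\inner{f}{{\mathfrak K}_{\omega_\varrho}(\cdot,\bl\theta(\bl z))}$, uses the isometry property of $\Gamma_\varrho$ to move the inner product into $\mb P_\varrho(L^2_\omega(\Omega))$, and then invokes the kernel identity \eqref{ne} relating $\Gamma_\varrho\bigl({\mathfrak K}_{\omega_\varrho}(\cdot,\bl\theta(\bl w))\bigr)$ to $\m B_\omega^{(\varrho)}(\cdot,\bl w)$ to recognize the result as $\widetilde{P}_{\omega,\varrho}\Gamma_\varrho f$. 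You instead isolate the abstract Hilbert-space fact that a unitary carrying a closed subspace onto a closed subspace intertwines the two orthogonal projections, and feed into it exactly the two facts the paper has already established: that $\Gamma_\varrho$ is unitary from $L^2_{\omega_\varrho}(\bl\theta(\Omega))$ onto $\mb P_\varrho(L^2_\omega(\Omega))$, and that its restriction maps $\mb A^2_{\omega_\varrho}(\bl\theta(\Omega))$ \emph{onto} $\mb P_\varrho(\mb A^2_\omega(\Omega))$. You are right that the surjectivity of the restriction is the one nontrivial input, and you correctly source it. Your argument is cleaner and makes transparent that no feature of reproducing kernels is really needed for this lemma; the paper's computation has the side benefit of exhibiting explicitly how $\Gamma_\varrho$ transforms the kernels, a relation recorded in \eqref{ne} and in the same spirit as the identities used throughout Section \ref{mainsec}. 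Either proof is acceptable.
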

\begin{proof}
Note that for $f \in L^2_{\omega_\varrho}(\bl \theta(\Omega)),$ we have
\Bea (\Gamma_\varrho P_{\omega_\varrho} f)(\bl z) &=&  \frac{1}{\sqrt{|G|}}  (P_{\omega_\varrho} f \circ \bl \theta)(\bl z)\ell_\varrho(\bl z) \\
&=&\frac{1}{\sqrt{|G|}}\ell_\varrho(\bl z) \inner{f}{{\mathfrak K}_{\omega_\varrho}(\cdot, \bl\theta(\bl z))} \\&=& \frac{1}{\sqrt{|G|}}\ell_\varrho(\bl z) \inner{\Gamma_\varrho f}{\Gamma_\varrho\big({\mathfrak K}_{\omega_\varrho}(\cdot, \bl\theta(\bl z))\big)}\\ &=& \frac{1}{\sqrt{|G|}} \inner{\Gamma_\varrho f}{\sqrt{|G|}\m B_\omega^{(\varrho)}(\cdot,\bl z)} = (\widetilde{P}_{\omega,\varrho} \Gamma_\varrho f)(\bl z),\Eea
where the penultimate equality follows from Equation \eqref{ne}.
\end{proof}
If $f \in \mb P_\varrho(\mb A_\omega^2(\Omega)),$ then $\widetilde{u}f \in \mb P_\varrho(L_\omega^2(\Omega)).$ It follows from the orthogonal decomposition of $L_\omega^2(\Omega)$ in Equation \eqref{l2ortho} that $(T_{\widetilde{u}}f)(\bl z) = \inner{\widetilde{u}f}{\m B_\omega(\cdot, \bl z)} = \inner{\widetilde{u}f}{\m B_\omega^{(\varrho)}(\cdot, \bl z)}=\widetilde{P}_{\omega,\varrho}(\widetilde{u}f)(\bl z).$  Therefore, the subspace $\mb P_\varrho(\mb A_\omega^2(\Omega))$ remains invariant under $T_{\widetilde{u}}$ and the restriction operator $T_{\widetilde{u}}: \mb P_\varrho(\mb A_\omega^2(\Omega)) \to \mb P_\varrho(\mb A_\omega^2(\Omega))$ is given by $(T_{\widetilde{u}}f)= \widetilde{P}_{\omega,\varrho}(\widetilde{u}f).$ Moreover, the orthogonal complement of $\mb P_\varrho(\mb A_\omega^2(\Omega))$  is $\oplus_{\varrho' \in \widehat{G}, \varrho' \not\equiv \varrho} \mb P_{\varrho'} (\mb A^2_\omega(\Omega)),$ which is also invariant under $T_{\widetilde{u}}.$ Thus $\mb P_{\varrho} (\mb A^2_\omega(\Omega))$ is a reducing subspace for $T_{\widetilde{u}}.$ 

\begin{lem}\label{equi1}
Let $  \widetilde{u} \in L^\infty(\Omega)$ be a $G$-invariant function such that $\widetilde{u}= u\circ \bl \theta.$ For every $\varrho \in \widehat{G}_1,$ the following diagram commutes:
\[ \begin{tikzcd}
\mb A^2_{\omega_\varrho}(\bl \theta(\Omega)) \arrow{r}{T_u} \arrow[swap]{d}{\Gamma_\varrho} & \mb A^2_{\omega_\varrho}(\bl \theta(\Omega)) \arrow{d}{\Gamma_\varrho} \\%
\mb P_\varrho(\mb A_\omega^2(\Omega)) \arrow{r}{T_{\widetilde{u}}}& \mb P_\varrho(\mb A_\omega^2(\Omega))
\end{tikzcd}
\]
\end{lem}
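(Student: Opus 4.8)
The plan is to verify commutativity of the square by a direct computation, chasing an arbitrary element $\phi \in \mb A^2_{\omega_\varrho}(\bl\theta(\Omega))$ around both paths and showing the results agree pointwise on $\Omega$. Going down-then-right, I first apply $\Gamma_\varrho$ to get $\Gamma_\varrho\phi = \tfrac{1}{\sqrt{|G|}}(\phi\circ\bl\theta)\ell_\varrho \in \mb P_\varrho(\mb A^2_\omega(\Omega))$, and then apply $T_{\widetilde u}$, which by the discussion preceding the lemma acts on the reducing subspace $\mb P_\varrho(\mb A^2_\omega(\Omega))$ as $\widetilde P_{\omega,\varrho}(\widetilde u \cdot)$. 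Going right-then-down, I apply $T_u = P_{\omega_\varrho}(u\,\cdot\,)$ on $\mb A^2_{\omega_\varrho}(\bl\theta(\Omega))$ and then $\Gamma_\varrho$. So the identity to be proved is
\[
T_{\widetilde u}\bigl(\Gamma_\varrho\phi\bigr) = \Gamma_\varrho\bigl(P_{\omega_\varrho}(u\phi)\bigr).
\]

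The key observation is that $\widetilde u = u\circ\bl\theta$ is $G$-invariant, so multiplication by $\widetilde u$ commutes with $\Gamma_\varrho$ in the obvious sense: $\widetilde u\cdot(\Gamma_\varrho\phi) = \tfrac{1}{\sqrt{|G|}}(\widetilde u)(\phi\circ\bl\theta)\ell_\varrho = \tfrac{1}{\sqrt{|G|}}\bigl((u\phi)\circ\bl\theta\bigr)\ell_\varrho = \Gamma_\varrho(u\phi)$, provided $u\phi \in L^2_{\omega_\varrho}(\bl\theta(\Omega))$, which holds since $u\in L^\infty(\bl\theta(\Omega))$ and $\phi\in\mb A^2_{\omega_\varrho}(\bl\theta(\Omega))$. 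Thus
\[
T_{\widetilde u}(\Gamma_\varrho\phi) = \widetilde P_{\omega,\varrho}\bigl(\widetilde u\,\Gamma_\varrho\phi\bigr) = \widetilde P_{\omega,\varrho}\bigl(\Gamma_\varrho(u\phi)\bigr).
\]
Now I invoke Lemma \ref{equi}: the square there says precisely $\widetilde P_{\omega,\varrho}\circ\Gamma_\varrho = \Gamma_\varrho\circ P_{\omega_\varrho}$ as maps $L^2_{\omega_\varrho}(\bl\theta(\Omega))\to\mb P_\varrho(\mb A^2_\omega(\Omega))$. Applying this to $u\phi \in L^2_{\omega_\varrho}(\bl\theta(\Omega))$ gives $\widetilde P_{\omega,\varrho}(\Gamma_\varrho(u\phi)) = \Gamma_\varrho(P_{\omega_\varrho}(u\phi)) = \Gamma_\varrho(T_u\phi)$, which is exactly the right-hand side. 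This closes the diagram.

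There is essentially no serious obstacle here; the lemma is a formal consequence of Lemma \ref{equi} together with $G$-invariance of $\widetilde u$. The only points requiring a line of care are: (a) checking that $T_u$ is well defined on $\mb A^2_{\omega_\varrho}(\bl\theta(\Omega))$, i.e. that $u\phi$ lies in $L^2_{\omega_\varrho}$ so that $P_{\omega_\varrho}$ can be applied — immediate from boundedness of $u$; (b) justifying that $T_{\widetilde u}$ restricted to the reducing subspace $\mb P_\varrho(\mb A^2_\omega(\Omega))$ is $\widetilde P_{\omega,\varrho}(\widetilde u\,\cdot\,)$ rather than the full Bergman projection composed with multiplication — but this is exactly what was established in the paragraph between Lemma \ref{equi} and the statement, using that $\widetilde u f\in\mb P_\varrho(L^2_\omega(\Omega))$ when $f\in\mb P_\varrho(\mb A^2_\omega(\Omega))$, which in turn uses $G$-invariance of $\widetilde u$ once more. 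With these noted, the computation above is the whole proof.
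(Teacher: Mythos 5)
Your proof is correct and is essentially identical to the paper's: both rest on the identity $\Gamma_\varrho(u\phi)=\widetilde{u}\,\Gamma_\varrho(\phi)$ (from $G$-invariance of $\widetilde{u}$) combined with Lemma \ref{equi}, merely traversing the chain of equalities in the opposite direction. The extra remarks on well-definedness and on the form of $T_{\widetilde{u}}$ restricted to $\mb P_\varrho(\mb A^2_\omega(\Omega))$ are fine but the paper treats them as already established in the preceding discussion.
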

\begin{proof}
Note that $\Gamma_\varrho( uf) = \frac{1}{\sqrt{|G|}}  (u \circ \bl \theta)(f \circ \bl \theta)~\ell_\varrho = \widetilde{u}~ \Gamma_\varrho(f).$ From Lemma \ref{equi}, we have
\Bea \Gamma_\varrho T_uf= \Gamma_\varrho(P_{\omega_\varrho}(uf)) = \widetilde{P}_{\omega,\varrho} (\Gamma_\varrho( uf)) =  \widetilde{P}_{\omega,\varrho} (\widetilde{u} \Gamma_\varrho( f)) = T_{\widetilde{u}}\Gamma_\varrho f.\Eea
\end{proof}

The following corollary is immediate from Lemma \ref{equi1}. 
\begin{cor}
Let $\widetilde{u}_1,\ldots,\widetilde{u}_n \in $ $L^\infty(\Omega)$ be $G$-invariant functions such that $\widetilde{u}_i = u_i \circ \bl \theta$ for $i=1,\ldots,n.$ For every $\varrho \in \widehat{G}_1,$ the product $T_{u_1}\cdots T_{u_n}$ on $\mb A^2_{\omega_\varrho}(\bl \theta(\Omega))$ is unitarily equivalent to $T_{\widetilde{u}_1}\cdots T_{\widetilde{u}_n}$ on $\mb P_\varrho(\mb A_\omega^2(\Omega))$.
\end{cor}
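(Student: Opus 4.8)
The plan is to obtain the statement by iterating the intertwining identity of Lemma \ref{equi1} and then invoking the unitarity of $\Gamma_\varrho$. Fix $\varrho \in \widehat{G}_1$. For each $i$, Lemma \ref{equi1} applied to the $G$-invariant symbol $\widetilde u_i = u_i \circ \bl\theta$ gives the operator identity
\[
\Gamma_\varrho\, T_{u_i} \;=\; T_{\widetilde u_i}\, \Gamma_\varrho
\]
of maps from $\mb A^2_{\omega_\varrho}(\bl\theta(\Omega))$ to $\mb P_\varrho(\mb A^2_\omega(\Omega))$; here $T_{u_i}$ is bounded since $u_i \in L^\infty(\bl\theta(\Omega))$, and, as recorded in the discussion preceding Lemma \ref{equi1}, $\mb P_\varrho(\mb A^2_\omega(\Omega))$ is a reducing subspace for each $T_{\widetilde u_i}$, so composing the restricted operators inside $\mb P_\varrho(\mb A^2_\omega(\Omega))$ is legitimate and agrees with restricting the composition.

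Next, I would push $\Gamma_\varrho$ through the product. A straightforward induction on $n$, using the displayed identity at each step, yields
\[
\Gamma_\varrho\, T_{u_1}\cdots T_{u_n} \;=\; T_{\widetilde u_1}\, \Gamma_\varrho\, T_{u_2}\cdots T_{u_n} \;=\;\cdots\;=\; T_{\widetilde u_1}\cdots T_{\widetilde u_n}\,\Gamma_\varrho .
\]
Since $\Gamma_\varrho : \mb A^2_{\omega_\varrho}(\bl\theta(\Omega)) \to \mb P_\varrho(\mb A^2_\omega(\Omega))$ is unitary (so $\Gamma_\varrho^{*}\Gamma_\varrho = I$ and $\Gamma_\varrho\Gamma_\varrho^{*} = I$ on the respective spaces), composing on the left with $\Gamma_\varrho^{*}$ gives
\[
T_{u_1}\cdots T_{u_n} \;=\; \Gamma_\varrho^{*}\,\big(T_{\widetilde u_1}\cdots T_{\widetilde u_n}\big)\,\Gamma_\varrho ,
\]
which is exactly the claimed unitary equivalence of $T_{u_1}\cdots T_{u_n}$ on $\mb A^2_{\omega_\varrho}(\bl\theta(\Omega))$ with $T_{\widetilde u_1}\cdots T_{\widetilde u_n}$ on $\mb P_\varrho(\mb A^2_\omega(\Omega))$.

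I do not expect any genuine obstacle here: the content is entirely in Lemma \ref{equi1} (and, behind it, in Lemma \ref{equi} together with the unitarity of $\Gamma_\varrho$). The only bookkeeping point is the reducing-subspace remark above, which ensures that the $n$-fold product of the restricted Toeplitz operators is the restriction of the $n$-fold product; equivalently, one may simply paste together $n$ copies of the commuting square of Lemma \ref{equi1} and read off the resulting outer rectangle.
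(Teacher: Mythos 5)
Your argument is correct and is precisely the one the paper intends: the corollary is stated as immediate from Lemma \ref{equi1}, and iterating the intertwining identity $\Gamma_\varrho T_{u_i}=T_{\widetilde u_i}\Gamma_\varrho$ and then conjugating by the unitary $\Gamma_\varrho$ is exactly that argument. Your remark on the reducing-subspace point is a sensible piece of bookkeeping that the paper also records in the discussion preceding Lemma \ref{equi1}.
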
 

If $\widetilde{u}$ is holomorphic, it is easy to see that the subspace $\ell_\varrho \cdot \mb P_{\tr}(\mb A^2_\omega(\Omega))$ is invariant under the restriction of the operator $T_{\widetilde{u}}$ on $\mb P_\varrho(\mb A^2_\omega(\Omega)).$
We prove that this continues to hold even when $\widetilde{u}$ is only a bounded function. 

Let $f \in \mb P_{\tr}(\mb A^2_\omega(\Omega)),$ then $ \ell_\varrho f \in \ell_\varrho \cdot \mb P_{\tr}(\mb A^2_\omega(\Omega)) \subseteq R^G_\varrho(\mb A^2_\omega(\Omega)) = \mb P_\varrho(\mb A^2_\omega(\Omega)),$ where the last equality follows from \cite[Lemma 3.1]{kag}. The density of $G$-invariant polynomials in $\mb P_{\tr}(\mb A^2_\omega(\Omega))$ implies that $\ell_\varrho \cdot \mb P_{\tr}(\mb A^2_\omega(\Omega))$ is dense in $\mb P_\varrho(\mb A^2_\omega(\Omega)).$

We consider $f \in \mb P_\varrho(\mb A^2_\omega(\Omega))$ such that $f = \ell_\varrho f_\varrho$ for $f_\varrho \in \mb P_{\tr}(\mb A^2_\omega(\Omega)).$  Then $\widetilde{u} f_\varrho \in \mb P_{\rm tr}(L_\omega^2(\Omega))$ using Lemma \ref{equal1} and the following holds: \Bea (T_{\widetilde{u}}f)(\bl z) = \inner{\widetilde{u} f}{\m B_\omega(\cdot, \bl z)} &=& \ell_\varrho(\bl z) \inner{\widetilde{u} f_\varrho}{\m B_\omega(\cdot, \bl z)}  \\ &=& \ell_\varrho(\bl z) \inner{\widetilde{u} f_\varrho}{\m B_\omega^{(\tr)}(\cdot, \bl z)}  \\ &=& \ell_\varrho(\bl z) \widetilde{P}_{\omega,\tr}(\widetilde{u} f_\varrho) (\bl z), \Eea 
where $\m B_\omega^{(\tr)}$ denotes the reproducing kernel of $\mb P_{\rm tr}(\mb A^2_\omega(\Omega)).$ Therefore, we have $T_{\widetilde{u}}(\ell_\varrho \cdot \mb P_{\tr}(\mb A^2_\omega(\Omega)) \subseteq \ell_\varrho \cdot \mb P_{\tr}(\mb A^2_\omega(\Omega))$ 
for every $\varrho \in \widehat{G}_1.$

This result can be extended to any representation $\varrho \in \widehat{G}$ with $\deg( \varrho) > 1.$ 
We consider a basis $\{\ell_{\varrho,i}\}_{i=1}^{\deg( \varrho)^2}$ of $\mb P_\varrho (\mb C[z_1,\ldots,z_d])$ as a free module over $\mb C[z_1,\ldots,z_d]^G.$ Since $\sum_{i=1}^{\deg( \varrho)^2} \ell_{\varrho,i} \cdot \mb C[z_1,\ldots,z_d]^G $ is dense in $\sum_{i=1}^{\deg( \varrho)^2} \ell_{\varrho,i} \cdot \mb P_{\rm tr} (\mb A_\omega^2(\Omega))$ and $\sum_{i=1}^{\deg( \varrho)^2} \ell_{\varrho,i} \cdot \mb P_{\rm tr} (\mb A^2_\omega(\Omega))$ is contained in $\mb P_\varrho (\mb A^2_\omega(\Omega)),$ we get that $\sum_{i=1}^{\deg( \varrho)^2} \ell_{\varrho,i} \cdot \mb P_{\rm tr} (\mb A^2_\omega(\Omega))$ is dense in $\mb P_\varrho (\mb A^2_\omega(\Omega)).$ For $f = \sum_{i=1}^{\deg( \varrho)^2} \ell_{\varrho,i} f_{\varrho,i},$ such that  $f_{\varrho,i} \in \mb P_{\rm tr} (\mb A^2_\omega(\Omega)),$ we conclude the following:
\bea\label{sum}
\nonumber(T_{\widetilde{u}}f)(\bl z) = \inner{\widetilde{u} f}{\m B_\omega(\cdot, \bl z)} &=& \inner{\sum_{i=1}^{\deg( \varrho)^2} \ell_{\varrho,i} \widetilde{u} f_{\varrho,i}}{\m B_\omega(\cdot, \bl z)} \\&=&\nonumber \sum_{i=1}^{\deg( \varrho)^2} \inner{\widetilde{u} f_{\varrho,i}}{M^*_{\ell_{\varrho,i}}\m B_\omega(\cdot, \bl z)} \\&=& \nonumber \sum_{i=1}^{\deg( \varrho)^2} \ell_{\varrho,i}(\bl z) \inner{\widetilde{u} f_{\varrho,i}}{\m B_\omega(\cdot, \bl z)} \\ &=&\nonumber \sum_{i=1}^{\deg( \varrho)^2} \ell_{\varrho,i}(\bl z) \inner{\widetilde{u} f_{\varrho,i}}{\m B_\omega^{(\tr)}(\cdot, \bl z)} \\ &=& \sum_{i=1}^{\deg( \varrho)^2} \ell_{\varrho,i}(\bl z) \widetilde{P}_{\omega,\tr}(\widetilde{u} f_{\varrho,i})(\bl z).
\eea
Hence, each $\sum_{i=1}^{\deg( \varrho)^2} \ell_{\varrho,i} \cdot \mb P_{\rm tr} (\mb A_\omega^2(\Omega))$ remains invariant for the operator $T_{\widetilde{u}}.$

\subsection{Generalized zero-product problem}
Let $\widetilde{u}, \widetilde{v}$ and $\widetilde{q}$ be $G$-invariant functions in $L^\infty(\Omega).$ It is evident that if $T_{\widetilde{u}}T_{\widetilde{v}}=T_{\widetilde{q}}$ on $\mb A^2_\omega(\Omega),$ then the restriction operator $T_{\widetilde{u}}T_{\widetilde{v}}=T_{\widetilde{q}}$ on $\mb P_\varrho(\mb A^2_\omega(\Omega))$ for every $\varrho \in \widehat{G}$. Interestingly enough, the converse holds as well with a weaker hypothesis.
\begin{lem}\label{prep}
If $T_{\widetilde{u}}T_{\widetilde{v}}=T_{\widetilde{q}}$ on $\mb P_\varrho(\mb A^2_\omega(\Omega))$ for at least an irreducible representation $\varrho \in \widehat{G},$ then $T_{\widetilde{u}}T_{\widetilde{v}}=T_{\widetilde{q}}$ on $\mb A^2_\omega(\Omega).$
\end{lem}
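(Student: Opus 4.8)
The plan is to exploit the fact that a Toeplitz operator $T_{\widetilde{w}}$ with $G$-invariant symbol $\widetilde{w}$ is reduced by each isotypic component $\mb P_{\varrho'}(\mb A^2_\omega(\Omega))$, together with a way of transporting the identity from one isotypic component to all the others. Write $S := T_{\widetilde{u}}T_{\widetilde{v}} - T_{\widetilde{q}}$; since $\widetilde u,\widetilde v,\widetilde q$ are $G$-invariant, $S$ commutes with the regular representation of $G$ on $\mb A^2_\omega(\Omega)$, hence $S$ preserves every subspace $\mb P_{\varrho'}(\mb A^2_\omega(\Omega))$, and the hypothesis says $S$ vanishes on the one particular summand $\mb P_\varrho(\mb A^2_\omega(\Omega))$. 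By the orthogonal decomposition \eqref{decomp}, it then suffices to prove that $S$ vanishes on $\mb P_{\varrho'}(\mb A^2_\omega(\Omega))$ for every other $\varrho' \in \widehat G$.

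First I would treat the case where $\varrho$ is the trivial representation $\tr$, which is the key reduction. Using the density, established in the paragraphs preceding this lemma, of $\sum_{i} \ell_{\varrho',i}\cdot \mb P_{\tr}(\mb A^2_\omega(\Omega))$ inside $\mb P_{\varrho'}(\mb A^2_\omega(\Omega))$, it is enough to evaluate $S$ on an element of the form $f = \sum_{i=1}^{\deg(\varrho')^2}\ell_{\varrho',i}f_{\varrho',i}$ with $f_{\varrho',i}\in \mb P_{\tr}(\mb A^2_\omega(\Omega))$. Now apply the computation recorded in Equation \eqref{sum}: for any bounded $G$-invariant symbol $\widetilde w$ one has $(T_{\widetilde w}f)(\bl z) = \sum_i \ell_{\varrho',i}(\bl z)\, \widetilde P_{\omega,\tr}(\widetilde w f_{\varrho',i})(\bl z)$, i.e. the action of $T_{\widetilde w}$ on $\sum_i \ell_{\varrho',i}\cdot \mb P_{\tr}$ is, componentwise, nothing but the action of $T_{\widetilde w}$ on $\mb P_{\tr}(\mb A^2_\omega(\Omega))$. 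Consequently
\[
(T_{\widetilde u}T_{\widetilde v}f)(\bl z) = \sum_{i=1}^{\deg(\varrho')^2}\ell_{\varrho',i}(\bl z)\,\big(T_{\widetilde u}T_{\widetilde v}\, f_{\varrho',i}\big)(\bl z),\qquad (T_{\widetilde q}f)(\bl z) = \sum_{i=1}^{\deg(\varrho')^2}\ell_{\varrho',i}(\bl z)\,\big(T_{\widetilde q}\, f_{\varrho',i}\big)(\bl z),
\]
where the operators on the right act on $\mb P_{\tr}(\mb A^2_\omega(\Omega))$; here I use that $T_{\widetilde w}$ maps $\mb P_{\tr}$ into $\mb P_{\tr}$ and that the decomposition $f = \sum_i \ell_{\varrho',i} f_{\varrho',i}$ is unique (Theorem \ref{acst}). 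By hypothesis $T_{\widetilde u}T_{\widetilde v} = T_{\widetilde q}$ on $\mb P_{\tr}(\mb A^2_\omega(\Omega))$, so each summand agrees and $Sf = 0$; by density $S\equiv 0$ on $\mb P_{\varrho'}(\mb A^2_\omega(\Omega))$, and since $\varrho'$ was arbitrary, $S \equiv 0$ on $\mb A^2_\omega(\Omega)$.

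It remains to reduce the case of a general irreducible $\varrho$ to the trivial one. Here I would argue: the identity $T_{\widetilde u}T_{\widetilde v} = T_{\widetilde q}$ on $\mb P_\varrho(\mb A^2_\omega(\Omega))$, by the same componentwise description \eqref{sum} applied with $\varrho$ in place of $\varrho'$, forces $T_{\widetilde u}T_{\widetilde v} = T_{\widetilde q}$ on $\mb P_{\tr}(\mb A^2_\omega(\Omega))$ — indeed $\mb P_\varrho(\mb A^2_\omega(\Omega))$ contains the dense subspace $\sum_i \ell_{\varrho,i}\cdot \mb P_{\tr}(\mb A^2_\omega(\Omega))$ on which these operators act diagonally through their action on $\mb P_{\tr}$, and $\ell_{\varrho,i}$ vanishes only on a set of measure zero so the factor $\ell_{\varrho,i}$ can be cancelled. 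Once the identity is known on $\mb P_{\tr}(\mb A^2_\omega(\Omega))$, the previous paragraph upgrades it to all of $\mb A^2_\omega(\Omega)$. The main obstacle is precisely this last point — justifying cleanly that agreement of $T_{\widetilde u}T_{\widetilde v}$ and $T_{\widetilde q}$ on a single isotypic component (of possibly higher dimension) implies agreement on the trivial component. The cleanest route is to observe that the vanishing operator $S = T_{\widetilde u}T_{\widetilde v}-T_{\widetilde q}$ intertwines the $G$-action, hence $S$ is, on each $\mb P_{\varrho'}$, of the form $(\text{mult. by a fixed } \deg(\varrho')\times\deg(\varrho') \text{ matrix of operators on } \mb P_{\tr})$ acting on $\mb P_{\tr}\otimes (\text{rep. space})$; vanishing on one such block forces vanishing of that matrix, which is the same matrix (built from $\widetilde u,\widetilde v,\widetilde q$ and the Bergman projection onto $\mb P_{\tr}$) for every $\varrho'$, including the regular-representation copy that recovers the plain action on $\mb P_{\tr}$. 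I would present this via the explicit formula \eqref{sum}, since that avoids setting up the tensor-product formalism and keeps the argument self-contained.
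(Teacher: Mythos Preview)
Your proposal is correct and follows essentially the same route as the paper: both use Equation~\eqref{sum} to see that, on the dense subspace $\sum_i \ell_{\varrho,i}\cdot \mb P_{\tr}(\mb A^2_\omega(\Omega))$, the operator $T_{\widetilde w}$ acts componentwise through $\widetilde P_{\omega,\tr}$, so the identity on any one isotypic block forces $\widetilde P_{\omega,\tr}(\widetilde u\,\widetilde P_{\omega,\tr}(\widetilde v\,\widehat f))=\widetilde P_{\omega,\tr}(\widetilde q\,\widehat f)$ on $\mb P_{\tr}$, and then the same formula propagates it to every other block. The only cosmetic difference is that the paper handles the ``cancellation'' you worry about in the last paragraph by the elementary device of testing against $f=\ell_{\varrho,1}\widehat f$ (i.e.\ setting $\widehat f_{\varrho,i}=0$ for $i\ge 2$), which isolates a single term and lets one divide by the polynomial $\ell_{\varrho,1}$; this replaces your appeal to uniqueness in Theorem~\ref{acst} or to the tensor-product formalism, and makes the higher-$\deg(\varrho)$ case as clean as the one-dimensional case.
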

\begin{proof}
 Suppose that the assumption holds for the representation $\varrho \in \widehat{G}.$ 
 Consider an element $f = \sum_{i=1}^{\deg( \varrho)^2} \ell_{\varrho,i} \widehat{f}_{\varrho,i}$ such that $\widehat{f}_{\varrho,i} \in \mb P_{\tr}(\mb A^2_\omega(\Omega)).$ Then from Equation \eqref{sum}, it is clear that $ T_{\widetilde{u}}T_{\widetilde{v}}f=T_{\widetilde{q}}f$ implies \Bea \sum_{i=1}^{\deg( \varrho)^2} \ell_{\varrho,i} \widetilde{P}_{\omega,\tr}(\widetilde{u} ~ \widetilde{P}_{\omega,\tr}(\widetilde{v}\widehat{f}_{\varrho,i})) =\sum_{i=1}^{\deg( \varrho)^2}\ell_{\varrho,i} \widetilde{P}_{\omega,\tr}(\widetilde{q} \widehat{f}_{\varrho,i}).\Eea In case, $\varrho \in \widehat{G}_1,$ we get $$\ell_{\varrho,1} \widetilde{P}_{\omega,\tr}(\widetilde{u} ~ \widetilde{P}_{\omega,\tr}(\widetilde{v}\widehat{f}_{\varrho,1}) =\ell_{\varrho,1} \widetilde{P}_{\omega,\tr}(\widetilde{q} \widehat{f}_{\varrho,1})$$ for $\widehat{f}_{\varrho,1} \in \mb P_{\tr}(\mb A^2_\omega(\Omega)),$ equivalently, $\widetilde{P}_{\omega,\tr}(\widetilde{u}~ \widetilde{P}_{\omega,\tr}(\widetilde{v} \widehat{f}))= \widetilde{P}_{\omega,\tr}(\widetilde{q} \widehat{f})$ for every $\widehat{f} \in \mb P_{\tr}(\mb A^2_\omega(\Omega)).$ If $\deg(\varrho)>1,$ we take $\widehat{f}_{\varrho,i} = 0$ for $i=2,\ldots,\deg( \varrho)^2$ and then repeat the above argument to observe the same. 
 
Let $f = \sum_{\varrho \in \widehat{G}} \sum_{i=1}^{\deg( \varrho)^2} \ell_{\varrho,i} f_{\varrho,i},$ for some $f_{\varrho,i} \in \mb P_{\tr}(\mb A^2_\omega(\Omega)).$ Then we get $$T_{\widetilde{u}}T_{\widetilde{v}}f =\sum_{\varrho \in \widehat{G}} \sum_{i=1}^{\deg( \varrho)^2} \ell_{\varrho,i} \widetilde{P}_{\omega,\tr}(\widetilde{u}~ \widetilde{P}_{\omega,\tr}(\widetilde{v} f_{\varrho,i}))=\sum_{\varrho \in \widehat{G}} \sum_{i=1}^{\deg( \varrho)^2} \ell_{\varrho,i} \widetilde{P}_{\omega,\tr}(\widetilde{q} f_{\varrho,i}) = T_{\widetilde{q}}f$$ on a dense subset of $\mb A^2_\omega(\Omega).$ Hence, the result follows.
\end{proof}
We note down an useful observation from the proof of Lemma \ref{prep} in the following corollary. 
 

\begin{cor}\label{all}
Let $\widetilde{u}, \widetilde{v}$ and $\widetilde{q}$ be $G$-invariant functions in $L^\infty(\Omega).$ If $T_{\widetilde{u}}T_{\widetilde{v}}=T_{\widetilde{q}}$ on $\mb P_\mu(\mb A^2_\omega(\Omega))$ for at least one representation $\mu \in \widehat{G},$ then $T_{\widetilde{u}}T_{\widetilde{v}}=T_{\widetilde{q}}$ on $\mb P_\varrho(\mb A^2_\omega(\Omega))$ for every $\varrho \in \widehat{G}.$
\end{cor}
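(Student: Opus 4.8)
The plan is to observe that this corollary is essentially an immediate by-product of Lemma \ref{prep} combined with the reducing-subspace structure already established, so there is very little new work.

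First I would apply Lemma \ref{prep} directly: since $T_{\widetilde{u}}T_{\widetilde{v}}=T_{\widetilde{q}}$ holds on $\mb P_\mu(\mb A^2_\omega(\Omega))$ for the single irreducible representation $\mu$, that lemma already yields $T_{\widetilde{u}}T_{\widetilde{v}}=T_{\widetilde{q}}$ on all of $\mb A^2_\omega(\Omega)$. Next I would recall, from the discussion preceding Lemma \ref{equi1}, that for a $G$-invariant symbol $\widetilde{u} = u\circ\bl\theta$ each subspace $\mb P_\varrho(\mb A^2_\omega(\Omega))$ is a reducing subspace for $T_{\widetilde{u}}$; the same therefore holds for $T_{\widetilde{v}}$ and $T_{\widetilde{q}}$, hence for the product $T_{\widetilde{u}}T_{\widetilde{v}}$. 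Restricting the operator identity $T_{\widetilde{u}}T_{\widetilde{v}}=T_{\widetilde{q}}$ on $\mb A^2_\omega(\Omega)$ to the reducing subspace $\mb P_\varrho(\mb A^2_\omega(\Omega))$ then gives the assertion for every $\varrho \in \widehat{G}$.

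Alternatively — and this is the internal mechanism alluded to in the statement — one can argue along the lines of the proof of Lemma \ref{prep} without invoking it as a black box. That proof shows that the hypothesis on $\mu$ forces $\widetilde{P}_{\omega,\tr}(\widetilde{u}\,\widetilde{P}_{\omega,\tr}(\widetilde{v}\widehat f)) = \widetilde{P}_{\omega,\tr}(\widetilde{q}\widehat f)$ for every $\widehat f \in \mb P_{\tr}(\mb A^2_\omega(\Omega))$: apply \eqref{sum} to $f = \ell_{\mu,i}\widehat f$ (all other components zero), and cancel the factor $\ell_{\mu,i}$, which vanishes only on a null set; here one uses Lemma \ref{equal1} to see that $\widetilde{v}\widehat f \in \mb P_{\tr}(L^2_\omega(\Omega))$, so the restricted Bergman projection appearing is indeed $\widetilde{P}_{\omega,\tr}$. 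Then, for an arbitrary $\varrho \in \widehat{G}$ and $f = \sum_{i=1}^{\deg(\varrho)^2}\ell_{\varrho,i}f_{\varrho,i}$ with $f_{\varrho,i}\in \mb P_{\tr}(\mb A^2_\omega(\Omega))$, formula \eqref{sum} gives $T_{\widetilde{u}}T_{\widetilde{v}}f = \sum_i \ell_{\varrho,i}\,\widetilde{P}_{\omega,\tr}(\widetilde{u}\,\widetilde{P}_{\omega,\tr}(\widetilde{v}f_{\varrho,i})) = \sum_i \ell_{\varrho,i}\,\widetilde{P}_{\omega,\tr}(\widetilde{q}f_{\varrho,i}) = T_{\widetilde{q}}f$; since such $f$ form a dense subset of $\mb P_\varrho(\mb A^2_\omega(\Omega))$ and all operators involved are bounded, the identity extends to the whole subspace.

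There is no genuine obstacle here: the entire content lies in Lemma \ref{prep} and its proof, and the only point deserving a line of care is the harmless cancellation of the nonvanishing polynomial factors $\ell_{\varrho,i}$ (equivalently, the fact that each $\mb P_\varrho(\mb A^2_\omega(\Omega))$ is reducing) when transferring the identity to and from the trivial isotypic component.
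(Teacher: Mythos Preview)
Your proposal is correct, and your ``alternative'' argument is precisely the paper's proof: reduce the hypothesis on $\mu$ to the identity $\widetilde{P}_{\omega,\tr}(\widetilde{u}\,\widetilde{P}_{\omega,\tr}(\widetilde{v}\widehat f)) = \widetilde{P}_{\omega,\tr}(\widetilde{q}\widehat f)$ on $\mb P_{\tr}(\mb A^2_\omega(\Omega))$ via \eqref{sum}, then rebuild it for any $\varrho$ on the dense subspace $\sum_i \ell_{\varrho,i}\cdot \mb P_{\tr}(\mb A^2_\omega(\Omega))$. Your first route (Lemma \ref{prep} as a black box plus the reducing-subspace observation) is an equally valid and slightly slicker packaging of the same content.
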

\begin{proof}
The result follows since the assumption boils down to $\widetilde{P}_{\omega,\tr}(\widetilde{u} ~ \widetilde{P}_{\omega,\tr}(\widetilde{v}\widehat{f}) = \widetilde{P}_{\omega,\tr}(\widetilde{q} \widehat{f})$ for every $\widehat{f} \in \mb P_{\tr}(\mb A^2_\omega(\Omega))$ and consequently $$\sum_{i=1}^{\deg( \varrho)^2} \ell_{\varrho,i} \widetilde{P}_{\omega,\tr}(\widetilde{u} ~ \widetilde{P}_{\omega,\tr}(\widetilde{v}\widehat{f}_{\varrho,i})) =\sum_{i=1}^{\deg( \varrho)^2}\ell_{\varrho,i} \widetilde{P}_{\omega,\tr}(\widetilde{q} \widehat{f}_{\varrho,i})$$ for every $f = \sum_{i=1}^{\deg( \varrho)^2} \ell_{\varrho,i} \widehat{f}_{\varrho,i}$ where $\widehat{f}_{\varrho,i} \in \mb P_{\rm tr}(\mb A^2_\omega(\Omega)).$
\end{proof}


\begin{proof}[\textbf {Proof of Theorem  \ref{main1}}]
Since $\Gamma_\mu:\mb A^2_{\omega_\mu}(\bl \theta(\Omega))\to\mb P_\mu(\mb A^2_\omega(\Omega))$ is a unitary operator, for each $\widetilde{f} \in \mb P_\mu(\mb A^2_\omega(\Omega))$ there exists $f \in \mb A^2_{\omega_\mu}(\bl \theta(\Omega))$ such that $\Gamma_\mu(f)= \widetilde{f}.$ We note that for every $\widetilde{f} \in \mb P_\mu(\mb A^2_\omega(\Omega)),$  \bea \label{hh1} T_{\widetilde{u}}T_{\widetilde{v}} \widetilde{f}= T_{\widetilde{u}}T_{\widetilde{v}} \Gamma_\mu(f) &=& T_{\widetilde{u}} \Gamma_\mu (T_v f)\\ &=& \nonumber \Gamma_\mu (T_uT_vf)\\ &=& \label{hh}  \Gamma_\mu(T_q f) \\&=& \nonumber T_{\widetilde{q}} \Gamma_\mu(f) = T_{\widetilde{q}} \widetilde{f},\eea 
where the equality in Equation \eqref{hh1} and the penultimate equality follow from Lemma \ref{equi1}, the equality in Equation \eqref{hh} follows from the assumption. This with Lemma \ref{prep} immediately proves 1(ii). 

Furthermore, from Corollary \ref{all} we get $T_{\widetilde{u}}T_{\widetilde{v}}=T_{\widetilde{q}}$ on $\mb P_\varrho(\mb A^2_\omega(\Omega))$ for every $\varrho \in \widehat{G}.$ A repetitive use of Lemma \ref{equi1} in a similar argument as above leads us to 1(i).

Conversely, if the assumption holds then $T_{\widetilde{u}}T_{\widetilde{v}}=T_{\widetilde{q}}$ on $\mb P_\varrho(\mb A^2_\omega(\Omega))$ for every $\varrho \in \widehat{G}.$ 
Then a repetitive application of Lemma \ref{equi1} as above proves part 2. 
\end{proof}
An important observation is noted down in the following corollary for the particular case of sign representation of $G$ and weight function $\omega \equiv 1.$
\begin{cor}\label{cor1}
Let $\Omega$ be a $G$-space under the action of a finite pseudoreflection group $G$ and $\bl \theta: \Omega \to \bl \theta(\Omega)$ be a basic polynomial map associated to $G.$ For $\widetilde{u} = u \circ \bl \theta$, $\widetilde{v} = v \circ \bl \theta$ and $\widetilde{q} = q \circ \bl \theta \in L^\infty(\Omega),$ $T_{\widetilde{u}}T_{\widetilde{v}}=T_{\widetilde{q}}$ on $\mb A^2(\Omega)$ if and only if $T_uT_v=T_q$ on $\mb A^2(\bl \theta(\Omega)).$
\end{cor}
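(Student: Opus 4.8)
The plan is to derive Corollary \ref{cor1} as a direct specialization of Theorem \ref{main1}, taking $\varrho = \sgn$ and the weight function $\omega \equiv 1$. The essential input is the identification, recorded in the second item of the remark following Lemma \ref{equi}, that for the sign representation we have $\omega_{\sgn} \equiv 1$ (this uses Corollary \ref{Jac}, which says $\ell_{\sgn}$ is a constant multiple of $J_{\bl \theta}$, together with Equation \eqref{weight1}), and consequently $\mb A^2_{\omega_{\sgn}}(\bl \theta(\Omega))$ coincides with the unweighted Bergman space $\mb A^2(\bl \theta(\Omega))$, while $\mb P_{\sgn}(\mb A^2(\Omega))$ is isometrically isomorphic to it via the unitary $\Gamma_{\sgn}$.

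First I would handle the backward direction. Assume $T_u T_v = T_q$ on $\mb A^2(\bl \theta(\Omega)) = \mb A^2_{\omega_{\sgn}}(\bl \theta(\Omega))$. Applying part (i)2 of Theorem \ref{main1} with $\mu = \sgn$ and $\omega \equiv 1$ (which is certainly $G$-invariant and continuous) gives $T_{\widetilde u} T_{\widetilde v} = T_{\widetilde q}$ on $\mb A^2_\omega(\Omega) = \mb A^2(\Omega)$, which is exactly the desired conclusion. For the forward direction, assume $T_{\widetilde u} T_{\widetilde v} = T_{\widetilde q}$ on $\mb A^2(\Omega)$; then part (ii) of Theorem \ref{main1} yields $T_u T_v = T_q$ on $\mb A^2_{\omega_\varrho}(\bl \theta(\Omega))$ for every one-dimensional representation $\varrho$, and specializing to $\varrho = \sgn$ gives $T_u T_v = T_q$ on $\mb A^2_{\omega_{\sgn}}(\bl \theta(\Omega)) = \mb A^2(\bl \theta(\Omega))$, as required.

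Since this is a pure specialization, there is essentially no obstacle: everything has already been proven. The only point that deserves an explicit sentence is the weight identification $\omega_{\sgn} \equiv 1$ so that the weighted spaces in Theorem \ref{main1} collapse to the unweighted Bergman spaces named in the corollary; the rest is just quoting Theorem \ref{main1}(i)2 and (ii). I would therefore write the proof in two or three lines, citing the remark after Lemma \ref{equi} for $\omega_{\sgn} \equiv 1$ and Theorem \ref{main1} for both implications.

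\begin{proof}[Proof of Corollary \ref{cor1}]
Take $\omega \equiv 1$ on $\Omega$, which is a $G$-invariant continuous weight, and consider the sign representation $\sgn \in \widehat{G}_1$. As noted in the remark following Lemma \ref{equi}, since $\ell_{\sgn}$ is a constant multiple of the Jacobian determinant $J_{\bl \theta}$ (Corollary \ref{Jac}), Equation \eqref{weight1} gives $\omega_{\sgn} \equiv 1$, so that $\mb A^2_{\omega_{\sgn}}(\bl \theta(\Omega)) = \mb A^2(\bl \theta(\Omega))$ and $\mb A^2_\omega(\Omega) = \mb A^2(\Omega)$. Now suppose $T_u T_v = T_q$ on $\mb A^2(\bl \theta(\Omega)) = \mb A^2_{\omega_{\sgn}}(\bl \theta(\Omega))$. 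Applying Theorem \ref{main1}(i)2 with $\mu = \sgn$ yields $T_{\widetilde u} T_{\widetilde v} = T_{\widetilde q}$ on $\mb A^2_\omega(\Omega) = \mb A^2(\Omega)$. Conversely, if $T_{\widetilde u} T_{\widetilde v} = T_{\widetilde q}$ on $\mb A^2(\Omega) = \mb A^2_\omega(\Omega)$, then Theorem \ref{main1}(ii) gives $T_u T_v = T_q$ on $\mb A^2_{\omega_\varrho}(\bl \theta(\Omega))$ for every $\varrho \in \widehat{G}_1$; specializing to $\varrho = \sgn$ gives $T_u T_v = T_q$ on $\mb A^2(\bl \theta(\Omega))$.
\end{proof}
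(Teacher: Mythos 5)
Your proposal is correct and is exactly the paper's intended argument: the paper presents Corollary \ref{cor1} as the specialization of Theorem \ref{main1} to the sign representation and the weight $\omega \equiv 1$, relying on the identification $\omega_{\sgn} \equiv 1$ (via Corollary \ref{Jac} and Equation \eqref{weight1}) so that the weighted spaces collapse to the unweighted Bergman spaces. Nothing further is needed.
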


\begin{rem}
For $\widetilde{q} \equiv 0,$ Corollary \ref{cor1} connects the zero-product problem of two Toeplitz operators on $\mb A^2(\Omega)$ to that of Toeplitz operators on $\mb A^2(\bl \theta(\Omega)).$
In fact, minor modification of the above method shows that $T_{\widetilde{u_1}}\cdots T_{\widetilde{u_N}}=0$ on $\mb A^2(\Omega)$ if and only if $T_{u_1}\cdots T_{u_N}=0$ on $\mb A^2(\bl \theta(\Omega))$ for $\widetilde{u_i} = u_i \circ \bl \theta \in L^\infty(\Omega),$ $i=1,\ldots,N.$ 
\end{rem}
\subsection{Characterization of a commuting pair of Toeplitz operators} Theorem \ref{main2} characterizes commuting pairs of Toeplitz operators on the weighted Bergman spaces of $\bl \theta(\Omega)$ in terms of commuting pairs of Toeplitz operators on $\mb A^2_\omega(\Omega).$ We give a proof of Theorem \ref{main2} here. Let $\widetilde{u}$ and $\widetilde{v}$ be $G$-invariant functions in $L^\infty(\Omega).$ If  $T_{\widetilde{u}}T_{\widetilde{v}} = T_{\widetilde{v}}T_{\widetilde{u}}$ on $\mb A^2_\omega(\Omega),$ then $T_{\widetilde{u}}T_{\widetilde{v}} = T_{\widetilde{v}}T_{\widetilde{u}}$ on each $\mb P_\varrho(\mb A^2_\omega(\Omega)).$ 
Also the converse holds with a rather weaker assumption.

\begin{lem}\label{com}
If $T_{\widetilde{u}}T_{\widetilde{v}}=T_{\widetilde{v}}T_{\widetilde{u}}$ on $\mb P_\varrho(\mb A^2_\omega(\Omega))$ for at least one irreducible representation $\varrho \in \widehat{G},$ then $T_{\widetilde{u}}T_{\widetilde{v}}=T_{\widetilde{v}}T_{\widetilde{u}}$ on $\mb A^2_\omega(\Omega).$
\end{lem}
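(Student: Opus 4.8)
The plan is to mirror the proof of Lemma \ref{prep}, replacing the product identity $T_{\widetilde u}T_{\widetilde v}=T_{\widetilde q}$ with the commutation identity $T_{\widetilde u}T_{\widetilde v}=T_{\widetilde v}T_{\widetilde u}$ and running the same machinery. The essential point is that, by Equation \eqref{sum}, for any $G$-invariant $\widetilde u\in L^\infty(\Omega)$ the action of $T_{\widetilde u}$ on an element $f=\sum_{i=1}^{\deg(\varrho)^2}\ell_{\varrho,i}f_{\varrho,i}$ of $\sum_i \ell_{\varrho,i}\cdot\mb P_{\tr}(\mb A^2_\omega(\Omega))$ is given coordinatewise by $T_{\widetilde u}f=\sum_{i}\ell_{\varrho,i}\,\widetilde P_{\omega,\tr}(\widetilde u f_{\varrho,i})$. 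Since the polynomials $\ell_{\varrho,i}$ are part of a free-module basis of $\mb C[z_1,\ldots,z_d]$ over $\mb C[z_1,\ldots,z_d]^G$, the decomposition $f=\sum_i\ell_{\varrho,i}f_{\varrho,i}$ is unique, so equality of two such sums forces equality of each coordinate.

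First I would fix an irreducible $\varrho\in\widehat G$ for which the hypothesis holds and take an arbitrary $f=\sum_{i=1}^{\deg(\varrho)^2}\ell_{\varrho,i}\widehat f_{\varrho,i}$ with $\widehat f_{\varrho,i}\in\mb P_{\tr}(\mb A^2_\omega(\Omega))$. Applying Equation \eqref{sum} twice, I get
\[
T_{\widetilde u}T_{\widetilde v}f=\sum_{i=1}^{\deg(\varrho)^2}\ell_{\varrho,i}\,\widetilde P_{\omega,\tr}\big(\widetilde u\,\widetilde P_{\omega,\tr}(\widetilde v\,\widehat f_{\varrho,i})\big),\qquad
T_{\widetilde v}T_{\widetilde u}f=\sum_{i=1}^{\deg(\varrho)^2}\ell_{\varrho,i}\,\widetilde P_{\omega,\tr}\big(\widetilde v\,\widetilde P_{\omega,\tr}(\widetilde u\,\widehat f_{\varrho,i})\big).
\]
By uniqueness of the $\ell_{\varrho,i}$-expansion, the hypothesis $T_{\widetilde u}T_{\widetilde v}f=T_{\widetilde v}T_{\widetilde u}f$ on this dense subset of $\mb P_\varrho(\mb A^2_\omega(\Omega))$ yields, for each $i$,
\[
\widetilde P_{\omega,\tr}\big(\widetilde u\,\widetilde P_{\omega,\tr}(\widetilde v\,\widehat f_{\varrho,i})\big)=\widetilde P_{\omega,\tr}\big(\widetilde v\,\widetilde P_{\omega,\tr}(\widetilde u\,\widehat f_{\varrho,i})\big).
\]
Choosing $\widehat f_{\varrho,i}=0$ for $i\neq 1$ (or simply noting $\widehat f_{\varrho,1}$ ranges over all of $\mb P_{\tr}(\mb A^2_\omega(\Omega))$), I conclude
\[
\widetilde P_{\omega,\tr}\big(\widetilde u\,\widetilde P_{\omega,\tr}(\widetilde v\,\widehat f)\big)=\widetilde P_{\omega,\tr}\big(\widetilde v\,\widetilde P_{\omega,\tr}(\widetilde u\,\widehat f)\big)\qquad\text{for every }\widehat f\in\mb P_{\tr}(\mb A^2_\omega(\Omega)).
\]

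Next I would propagate this back to all of $\mb A^2_\omega(\Omega)$ exactly as in Lemma \ref{prep}: write a general $f=\sum_{\varrho'\in\widehat G}\sum_{i=1}^{\deg(\varrho')^2}\ell_{\varrho',i}f_{\varrho',i}$ with $f_{\varrho',i}\in\mb P_{\tr}(\mb A^2_\omega(\Omega))$ via Theorem \ref{acst}, apply Equation \eqref{sum} to each block, and use the coordinatewise identity just established to see that $T_{\widetilde u}T_{\widetilde v}f=\sum_{\varrho',i}\ell_{\varrho',i}\widetilde P_{\omega,\tr}(\widetilde u\,\widetilde P_{\omega,\tr}(\widetilde v f_{\varrho',i}))=\sum_{\varrho',i}\ell_{\varrho',i}\widetilde P_{\omega,\tr}(\widetilde v\,\widetilde P_{\omega,\tr}(\widetilde u f_{\varrho',i}))=T_{\widetilde v}T_{\widetilde u}f$ on a dense subset of $\mb A^2_\omega(\Omega)$; boundedness of all operators involved then gives the identity everywhere. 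The only point requiring a little care—the ``main obstacle,'' though it is mild—is making sure the coordinatewise reduction is legitimate: I must invoke that $\{\ell_{\varrho,i}\}$ is a genuine free-module basis so the expansion is unique, and that $\sum_i\ell_{\varrho,i}\cdot\mb P_{\tr}(\mb A^2_\omega(\Omega))$ is dense in $\mb P_\varrho(\mb A^2_\omega(\Omega))$ (established in the discussion preceding Equation \eqref{sum}), so that an identity holding on this subspace extends by continuity. Everything else is a verbatim adaptation of Lemma \ref{prep}.
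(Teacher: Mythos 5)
Your proposal is correct and follows exactly the route the paper intends: the paper's own "proof" of this lemma is the single sentence that it is similar to Lemma \ref{prep} with details omitted, and your argument is precisely that adaptation, using Equation \eqref{sum}, the coordinatewise reduction over the free-module basis $\{\ell_{\varrho,i}\}$, and density plus boundedness to pass from $\mb P_\varrho(\mb A^2_\omega(\Omega))$ to all of $\mb A^2_\omega(\Omega)$.
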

Since the proof is similar to that of Lemma \ref{prep}, we omit the details.
Clearly, if $T_{\widetilde{u}}T_{\widetilde{v}}=T_{\widetilde{v}}T_{\widetilde{u}}$ on $\mb P_\mu(\mb A^2_\omega(\Omega))$ for at least one irreducible representation $\mu \in \widehat{G},$ then $T_{\widetilde{u}}T_{\widetilde{v}}=T_{\widetilde{v}}T_{\widetilde{u}}$ on $\mb P_\varrho(\mb A^2_\omega(\Omega))$ for every $\varrho \in \widehat{G}.$ 
\begin{proof}[\textbf {Proof of Theorem  \ref{main2}}]
From the assumption and Lemma \ref{equi1}, we have $T_{\widetilde{u}}T_{\widetilde{v}}=T_{\widetilde{v}}T_{\widetilde{u}}$ on $\mb P_\mu(\mb A^2_\omega(\Omega))$ and then Lemma \ref{com} proves 1(ii). 

Since 1(ii) implies that $T_{\widetilde{u}}T_{\widetilde{v}}=T_{\widetilde{v}}T_{\widetilde{u}}$ on $\mb P_\varrho(\mb A^2_\omega(\Omega))$ for $\varrho \in \widehat{G},$ we apply Lemma \ref{equi1} to conclude 1(i).

Converse is straightforward with a similar application of Lemma \ref{equi1} as above.
\end{proof}
\begin{cor}
Let $\widetilde{u} = u \circ \bl \theta$ and $\widetilde{v} = v \circ \bl \theta$ be $G$-invariant functions in $L^\infty(\Omega).$ Then $T_uT_v=T_vT_u$ on $\mb A^2(\bl \theta(\Omega))$ if and only if $T_{\widetilde{u}}T_{\widetilde{v}}=T_{\widetilde{v}}T_{\widetilde{u}}$ on $\mb A^2(\Omega).$
\end{cor}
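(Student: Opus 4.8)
The plan is to obtain this statement as the special case of Theorem~\ref{main2} in which the weight is trivial and the distinguished one-dimensional representation is the sign representation $\sgn$ of $G$. First I would take $\omega\equiv 1$, which is plainly $G$-invariant and continuous, so that all hypotheses of Theorem~\ref{main2} hold and $\mb A^2_\omega(\Omega)$ is by definition the unweighted Bergman space $\mb A^2(\Omega)$. Next I would invoke Corollary~\ref{Jac}: the generating polynomial $\ell_{\sgn}$ equals a nonzero constant multiple of the Jacobian determinant $J_{\bl\theta}$, so the weight $\omega_{\sgn}$ from Equation~\eqref{wei} (equivalently \eqref{weight1}) is identically $1$ on $\bl\theta(\Omega)$, whence $\mb A^2_{\omega_{\sgn}}(\bl\theta(\Omega))=\mb A^2(\bl\theta(\Omega))$; this is exactly the identification recorded in the Remark following the unitarity of $\Gamma_\varrho$.

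With these identifications, both implications are immediate. If $T_uT_v=T_vT_u$ on $\mb A^2(\bl\theta(\Omega))=\mb A^2_{\omega_{\sgn}}(\bl\theta(\Omega))$, then applying Theorem~\ref{main2}(i) with $\mu=\sgn$, part~2 gives $T_{\widetilde u}T_{\widetilde v}=T_{\widetilde v}T_{\widetilde u}$ on $\mb A^2_\omega(\Omega)=\mb A^2(\Omega)$. Conversely, if $T_{\widetilde u}T_{\widetilde v}=T_{\widetilde v}T_{\widetilde u}$ on $\mb A^2(\Omega)$, then Theorem~\ref{main2}(ii) yields $T_uT_v=T_vT_u$ on $\mb A^2_{\omega_\varrho}(\bl\theta(\Omega))$ for every one-dimensional $\varrho\in\widehat{G}$; specializing to $\varrho=\sgn$ and again using $\mb A^2_{\omega_{\sgn}}(\bl\theta(\Omega))=\mb A^2(\bl\theta(\Omega))$ finishes the proof.

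I do not expect any genuine obstacle here: the whole argument is a dictionary translation through the already-established Theorem~\ref{main2}, which itself rests on the intertwining diagram of Lemma~\ref{equi1} and the ``one representation suffices'' principle of Lemma~\ref{com}. The only point worth spelling out explicitly is the admissibility of $\omega\equiv 1$ as a weight together with the identity $\omega_{\sgn}\equiv 1$, and both are settled at once by Corollary~\ref{Jac} and the formula in Equation~\eqref{wei}.
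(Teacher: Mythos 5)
Your proposal is correct and is exactly the argument the paper intends: the corollary is the special case of Theorem \ref{main2} with $\omega\equiv 1$ and $\mu=\varrho=\sgn$, using Corollary \ref{Jac} to identify $\omega_{\sgn}\equiv 1$ and hence $\mb A^2_{\omega_{\sgn}}(\bl\theta(\Omega))=\mb A^2(\bl\theta(\Omega))$. Nothing further is needed.
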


\begin{rem}[Hankel Operators]For $\widetilde{u} \in L^\infty(\Omega),$ the Hankel operator $H_{\widetilde{u}} : \mb A^2_\omega(\Omega) \to \mb A^2_\omega(\Omega)^\perp$ is defined by $H_{\widetilde{u}}f = \widetilde{u}f- \widetilde{P}_{\omega}(\widetilde{u}f),$ where $\widetilde{P}_\omega : L^2_\omega(\Omega) \to \mb A^2_\omega(\Omega)$ is the orthogonal projection. We note that the unitary $\Gamma_\varrho: \mb A^2_{\omega_\varrho}(\bl \theta(\Omega)) \to \mb P_\varrho(\mb A^2_\omega(\Omega))$ intertwines $H_{\widetilde{u}}$ and $H_u,$ where $\widetilde{u}=u \circ \bl \theta$ and $\varrho \in \widehat{G}_1.$ Then a similar method to the above helps us to study algebraic properties of the Hankel operators on the Bergman spaces of the quotient domains.
\end{rem} 

\section{Application to certain quotient domains}\label{appl}
 Let $\mb D^d=\{\bl z \in \mb C^d: |z_1|, \ldots , |z_d| < 1\}$ be the polydisc in $\mb C^d$ and $\mb B_d$ be the open unit ball with respect to the $\ell^2$-norm induced by the standard inner product on $\mb C^d.$ 
 In this section, we apply the general results (Theorem \ref{main1} and Theorem \ref{main2}) on the zero-product problem and on the characterization of commuting of Toeplitz operators to the weighted Bergman spaces on specific quotient domains $\Omega/G$ where $\Omega=\mb D^d$ or $\mb B_d.$ 
 
First we recall the zero-product theorems for Toeplitz operators on the Bergman spaces $\mb A^2(\mb D^d)$ and $\mb A^2(\mb B_d)$ from \cite{MR2294274} and \cite{MR2214579}, respectively. 
A function $\phi$ is called a $d$-harmonic on a domain $\Omega \subset \mathbb C^d$ if $$\frac{\del^2\phi}{\del z_i \del\overbar{z}_i}=0~\text{for all}~ i=1,\ldots,d.$$ Let $h^\infty(\Omega)$ denote the class of all bounded $d$-harmonic functions on $\Omega.$

A function $\phi$ is called pluriharmonic on $\Omega$ if $$\frac{\del^2\phi}{\del z_i \del\overbar{z}_j}=0~\text{for  all}~i,j=1,\ldots,d.$$

\begin{thm}\cite[p. 45, Theorem 1.1.]{MR2294274}\label{d}
Suppose that $u, v \in h^\infty(\mb D^d)$ are continuous on $\mb D^d \cup W$ for some relatively open subset $W \text{~of~} \mb T^d.$ If $T_uT_v=0$ on $\mb A^2(\mb D^d)$ then either $u=0$ or $v=0.$
\end{thm}
\begin{thm}\cite[p. 309, Theorem 1.1.]{MR2214579}\label{b}
Suppose that $u, v \in h^\infty(\mb B_d)$ are continuous on $\mb B_d \cup W$ for some relatively open subset $W \text{~of~} \del_S\mb B_d.$ If $T_uT_v=0$ on $\mb A^2(\mb B_d)$ then either $u=0$ or $v=0.$
\end{thm}
\begin{defn}\label{ph1}
Let $\Omega$ be a $G$-invariant domain and $\bl \theta: \Omega \to \bl \theta(\Omega)$ be a basic polynomial map associated to the finite pseudoreflection group $G.$ A function $\phi$ defined on $\bl \theta(\Omega)$ is said to be $G$-pluriharmonic on $\bl \theta(\Omega)$ if 
$\phi \circ \bl \theta$ is a pluriharmonic function on $\Omega.$
\end{defn}
Suppose $\widetilde{\phi}$ be a pluriharmonic function on $\Omega.$ Then we write $\phi \circ \bl \theta  = \sum_{\sigma \in G} \widetilde{\phi} \circ \sigma$ and $\phi$ is a $G$-pluriharmonic function on $\bl \theta(\Omega).$ The set of all bounded $G$-pluriharmonic functions on $\bl \theta(\Omega)$ is denoted by $h_G^\infty(\bl \theta(\Omega)).$  

Here we recall some requisite facts on the Shilov boundary $\del_S\Omega$ of a bounded domain $\Omega$. The Shilov boundary of the polydisc $\mb D^d$ is the $d$-torus $\mb T^d.$ The Shilov boundary of the unit ball $\mb B_d$ coincides with its topological boundary. We note that $\bl \theta : \Omega \to \bl \theta(\Omega)$ is a proper holomorphic map which can be extended to a proper holomorphic map of the same multiplicity from $\Omega'$ to $\bl \theta(\Omega)',$ where the open sets $\Omega'$ and $\bl \theta(\Omega)'$ contain $\ov{\Omega}$ and $\ov{\bl \theta(\Omega)},$ respectively. Then \cite[p. 100, Corollary 3.2]{MR3001625} states that $\bl \theta^{-1}(\del_S\bl \theta(\Omega)) = \del_S\Omega.$ So for a relatively open subset $W$ of $\del_S\bl \theta(\Omega),$ there exists at least one relatively open subset $W'$ of the Shilov boundary of $\Omega$ such that $\bl \theta(W') = W.$ Thus if a function $u$ is continuous on $\bl \theta(\Omega)\cup W$ then so is $\widetilde{u}=u \circ \bl \theta$ on $\Omega \cup W'.$ 


\begin{proof}[\textbf  {Proof of Theorem \ref{result}}]
From the hypothesis and Theorem \ref{main1}, we note that  $T_{\widetilde{u}}T_{\widetilde{v}} = 0$ on $\mb A^2(\Omega)$. The assumption on $u$ and $v$ implies that $\widetilde{u} = u \circ \bl \theta$ and $\widetilde{v} = v \circ \bl \theta \in h^\infty(\Omega).$ The functions $u$ and $v$ are continuous on $\Omega \cup W^\i$ for some relatively open subset $W^\i$ of $\del_S\Omega.$ For $\Omega = \mb D^d,$ we use Theorem \ref{d} and for $\Omega = \mb B_d,$ we use Theorem \ref{b} to conclude that either ${\widetilde{u}}=0$ or ${\widetilde{v}}=0.$ This completes the proof.
\end{proof}
It is worth mentioning the particular case of the sign representation.
\begin{cor}
Let $\Omega = \mb D^d$ or $\mb B_d.$ Suppose that $u, v \in h_G^\infty(\bl \theta(\Omega))$ are continuous on $\bl \theta(\Omega) \cup W$ for some relatively open subset $W \text{~of~} \del_S\bl \theta(\Omega).$ If $T_uT_v=0$ on $\mb A^2(\bl \theta(\Omega))$ then either $u=0$ or $v=0.$
\end{cor}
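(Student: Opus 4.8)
The corollary is the specialization of Theorem \ref{result} to the sign representation $\sgn \in \w{G}_1$, so the plan is simply to check that the hypotheses of Theorem \ref{result} are met in this case and that the conclusion translates correctly. First I would recall from Corollary \ref{Jac} that the generating polynomial $\ell_{\sgn}$ equals (up to a nonzero constant) the Jacobian determinant $J_{\bl\theta}$ of the basic polynomial map. Substituting this into the definition \eqref{wei} of the weight $\omega_\varrho$ with $\omega\equiv 1$ gives, exactly as noted in the remark following Lemma \ref{equi},
\[
\omega_{\sgn}(\bl\theta(\bl z)) = \frac{|\ell_{\sgn}(\bl z)|^2}{|J_{\bl\theta}(\bl z)|^2}\cdot 1 \equiv 1,
\]
so that $\mb A^2_{\omega_{\sgn}}(\bl\theta(\Omega))$ is nothing but the unweighted Bergman space $\mb A^2(\bl\theta(\Omega))$.

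Next I would simply invoke Theorem \ref{result} with the one-dimensional representation $\varrho = \sgn$. The hypothesis there asks for $u,v \in h_G^\infty(\bl\theta(\Omega))$ continuous on $\bl\theta(\Omega)\cup W$ for some relatively open $W\subseteq\del_S\bl\theta(\Omega)$, which is precisely what is assumed in the corollary; and the hypothesis $T_uT_v = 0$ on $\mb A^2_{\omega_{\sgn}}(\bl\theta(\Omega))$ is, by the identification above, the same as $T_uT_v = 0$ on $\mb A^2(\bl\theta(\Omega))$. The conclusion of Theorem \ref{result} is exactly that $u=0$ or $v=0$.

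There is essentially no obstacle here; the only point deserving a word is that the Shilov boundary $\del_S\bl\theta(\Omega)$ is the correct notion appearing both in Theorem \ref{result} and in the corollary, and that for $\Omega = \mb D^d$ one has $\del_S\bl\theta(\mb D^d) = \bl\theta(\mb T^d)$ while for $\Omega = \mb B_d$ it coincides with the topological boundary — both facts already recorded in the discussion preceding the proof of Theorem \ref{result} via \cite[p. 100, Corollary 3.2]{MR3001625}. Thus the corollary follows directly.

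\begin{proof}
By Corollary \ref{Jac}, $\ell_{\sgn}$ is a nonzero constant multiple of $J_{\bl\theta}$, so from Equation \eqref{wei} with $\omega\equiv 1$ we obtain $\omega_{\sgn}(\bl\theta(\bl z)) = |\ell_{\sgn}(\bl z)|^2/|J_{\bl\theta}(\bl z)|^2 \equiv 1$; hence $\mb A^2_{\omega_{\sgn}}(\bl\theta(\Omega))$ coincides with $\mb A^2(\bl\theta(\Omega))$. Applying Theorem \ref{result} with the one-dimensional representation $\varrho = \sgn$ then yields the assertion.
\end{proof}
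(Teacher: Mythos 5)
Your proposal is correct and is exactly the route the paper takes: the corollary is stated there as the special case $\varrho=\sgn$ of Theorem \ref{result}, using the fact (from Corollary \ref{Jac} and Equation \eqref{wei} with $\omega\equiv 1$) that $\omega_{\sgn}\equiv 1$, so that $\mb A^2_{\omega_{\sgn}}(\bl\theta(\Omega))=\mb A^2(\bl\theta(\Omega))$. Nothing further is needed.
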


\subsection{Symmetrized polydisc}
The permutation group on $d$ symbols is denoted by $\mathfrak S_d$. The group $\mathfrak S_d$ acts on $\mb C^d$ by permuting its coordinates, that is, 
\Bea
\sigma \cdot (z_1,\ldots,z_d) = (z_{\sigma^{-1}(1)},\ldots,z_{\sigma^{-1}(d)}) \,\, \text{for}\,\,  \sigma \in \mathfrak S_d\,\, \text{and}\,\, (z_1,\ldots,z_d) \in \mb C^d.
\Eea
Clearly, the open unit polydisc $\mb D^d$ is invariant under the action of the group $\mathfrak S_d.$ Let $s_k$ denote the elementary symmetric polynomials of degree $k$ in $d$ variables, for $k=1,\ldots, d.$ 
The symmetrization map \bea\label{sym}\bl s :=(s_1,\ldots,s_d) : \mb C^d \to \mb C^d\eea is a basic polynomial map associated to the pseudoreflection group $\mathfrak S_d.$ The domain $\mb G_d:=\bl s(\mb D^d)$ is known as the symmetrized polydisc. 

The symmetric group $\mathfrak{S}_d$ has only two one-dimensional representation in $\widehat{\mathfrak{S}}_d.$ Those are the sign representation and the trivial representation of $\mathfrak{S}_d$. We take $\omega \equiv 1$ in Equation \eqref{wei} and thus we have $\omega_\sgn \equiv 1$ and $\omega_{\tr}(\bl s(\bl z)) = \frac{1}{\prod_{i<j} |z_i-z_j|^2}, \,\,\bl z \in \mb D^d.$ 
\subsubsection{Zero-product problem} We have the following zero-product theorem for Toeplitz operators on $\mb A^2_{\omega_{\tr}}(\mb G_d)$ and $\mb A^2(\mb G_d)$ as a consequence of Theorem \ref{result}.
\begin{cor}
Suppose that $u, v \in L^\infty(\mb G_d)$ are continuous on $\mb G_d \cup W$ for some relatively open subset $W \text{~of~} \bl s(\mb T^d).$ Assume that $\widetilde{u} = u \circ \bl s, \widetilde{v} = v \circ \bl s$ belong to $h^\infty(\mathbb D^d).$
\begin{enumerate}
    \item If $T_uT_v=0$ on $\mb A^2(\mb G_d),$ then either $u=0$ or $v=0.$
    \item If $T_uT_v=0$ on $\mb A^2_{\omega_{\tr}}(\mb G_d),$ then either $u=0$ or $v=0.$
\end{enumerate}
\end{cor}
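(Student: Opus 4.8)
The plan is to reduce the statement to the zero-product theorem on the polydisc (Theorem \ref{d}) by pulling everything back along the symmetrization map $\bl s$, following the pattern of the proof of Theorem \ref{result}. Recall that $\mathfrak S_d$ has exactly two one-dimensional representations, the trivial one $\tr$ and the sign $\sgn$, that $\omega_{\sgn}\equiv 1$ so that $\mb A^2_{\omega_{\sgn}}(\mb G_d)=\mb A^2(\mb G_d)$, and that $\omega_{\tr}(\bl s(\bl z))=\prod_{i<j}|z_i-z_j|^{-2}$. Thus case (1) corresponds to $\mu=\sgn$ and case (2) to $\mu=\tr$.

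First I would apply Theorem \ref{main1}(i) with $\Omega=\mb D^d$, $G=\mathfrak S_d$, $\bl\theta=\bl s$, $\omega\equiv 1$, $q\equiv 0$, and $\mu$ equal to $\sgn$ (case (1)) or $\tr$ (case (2)): the hypothesis $T_uT_v=0$ on $\mb A^2_{\omega_\mu}(\mb G_d)$ yields the conclusion $T_{\widetilde u}T_{\widetilde v}=T_{\widetilde q}=0$ on $\mb A^2(\mb D^d)$, where $\widetilde u=u\circ\bl s$ and $\widetilde v=v\circ\bl s$. Next I would verify the hypotheses of Theorem \ref{d} for $\widetilde u,\widetilde v$: boundedness and $d$-harmonicity hold because $\widetilde u,\widetilde v\in h^\infty(\mb D^d)$ by assumption, and for the boundary continuity I would use that $\del_S\mb G_d=\bl s(\mb T^d)$ (which follows from $\bl s^{-1}(\del_S\mb G_d)=\del_S\mb D^d=\mb T^d$, recalled in Section \ref{appl}), so that the relatively open set $W\subseteq\bl s(\mb T^d)$ is $\bl s(W')$ for some relatively open $W'\subseteq\mb T^d$, and hence $\widetilde u,\widetilde v$ are continuous on $\mb D^d\cup W'$. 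Theorem \ref{d} then gives $\widetilde u\equiv 0$ or $\widetilde v\equiv 0$ on $\mb D^d$, and since $\bl s:\mb D^d\to\mb G_d$ is surjective, $u\circ\bl s\equiv 0$ forces $u\equiv 0$ (likewise for $v$), which is the desired conclusion.

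I do not anticipate any real obstacle: the corollary is Theorem \ref{result} specialized to $(\Omega,G)=(\mb D^d,\mathfrak S_d)$ with the two one-dimensional representations made explicit, and all needed inputs — Theorem \ref{main1}, the identification $\del_S\mb G_d=\bl s(\mb T^d)$, the surjectivity and properness of $\bl s$, and Theorem \ref{d} — are already established. The only point deserving a line of care is transferring the boundary-continuity hypothesis from $W\subseteq\del_S\mb G_d$ to $W'\subseteq\mb T^d$, and that is exactly the step carried out in the proof of Theorem \ref{result}.
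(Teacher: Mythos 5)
Your proof is correct and follows the same route as the paper: apply Theorem \ref{main1}(i) with $q\equiv 0$ and $\mu=\sgn$ or $\mu=\tr$ to pull the identity back to $T_{\widetilde u}T_{\widetilde v}=0$ on $\mb A^2(\mb D^d)$, transfer the boundary continuity via $\bl s^{-1}(\del_S\mb G_d)=\mb T^d$, and invoke Theorem \ref{d}. You also correctly note that the $d$-harmonicity of $\widetilde u,\widetilde v$ is assumed directly here, so the Lemma \ref{ph} step used in Theorem \ref{result} is not needed.
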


\subsubsection{Commuting pairs of Toeplitz operators} A bounded $\mathfrak S_d$-pluriharmonic function $u$ on $\mb G_d$ can be written as $ \widetilde{u} = u \circ \bl s$ for a $\mathfrak S_d$-invariant pluriharmonic function $\widetilde{u}$ on $\mb D^d.$  It is known that a bounded pluriharmonic function $\widetilde{u}$ on $\mb D^d$ can be written as $\widetilde{u}=\widetilde{f}+\ov{\widetilde{g}}$ for $\widetilde{f},\widetilde{g} \in \m O(\mb D^d),$ where $\m O(\Omega)$ denotes the set of all holomorphic functions on $\Omega.$ Moreover, if $\widetilde{u}$ is $\mathfrak S_d$-invariant, then both $\widetilde{f},\widetilde{g}$ are $\mathfrak S_d$-invariant and from \cite[Subsection 3.1.1.]{MR4404033}, we have $\widetilde{f} = f \circ \bl s$ and $\widetilde{g} = g \circ \bl s$ for $f,g \in \m O(\mb G_d).$ Thus $u \circ \bl s(\bl z)=\widetilde{u}(\bl z) =  \widetilde{f}(\bl z)+\ov{\widetilde{g}}(\bl z) =  f\circ \bl s(\bl z)+\ov{g}\circ \bl s(\bl z)$ for $f, g \in \m O(\mb G_d).$
Moreover, if $u$ is $\mathfrak{S}_d$-pluriharmonic on $\mb G_d,$ then $u\circ \bl s$ is a symmetric $d$-harmonic function on $\mb D^d.$ However, it is not clear whether $u$ is an $\mathfrak{S}_d$-pluriharmonic function on $\mb G_d$, whenever $u\circ \bl s$ is a symmetric $d$-harmonic function on $\mb D^d$. Let $\m X(\mb G_d)=\{u : u \text{ is }\mathfrak{S}_d\text{-pluriharmonic on } \mb G_d \text{ and } u\circ \bl s \text{ is } d\text{-harmonic on } \mb D^d\}.$
\begin{prop}
Let $u,v \in L^\infty(\mb G_d)$ be $\mathfrak{S}_d$-pluriharmonic functions such that $u = f+\ov{g}$ and $v=h+\ov{k}$ for $f,g,h,k \in \m O(\mb G_d).$ Then if $f\ov{k} - h\ov{g}$ is $\mathfrak{S}_d$-pluriharmonic on $\mb G_d$ then \begin{enumerate}
    \item $T_uT_v = T_vT_u$ on $\mb A^2(\mb G_d)$ and
    \item $T_uT_v = T_vT_u$ on $\mb A^2_{\omega_{\tr}}(\mb G_d).$
\end{enumerate} Moreover, $T_uT_v = T_vT_u$ on $\mb A^2(\mb G_d)$ (or on $\mb A^2_{\omega_{\tr}}(\mb G_d)$) if and only if $f\ov{k} - h\ov{g} \in \m X(\mb G_d).$
\end{prop}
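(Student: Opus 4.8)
The strategy is to transport the statement to the polydisc by means of Theorem~\ref{main2} and then to apply the known characterization of commuting Toeplitz operators with bounded pluriharmonic symbols on $\mb A^2(\mb D^d)$. Since $u,v\in L^\infty(\mb G_d)$ are $\mathfrak{S}_d$-pluriharmonic on $\mb G_d$, Lemma~\ref{ph} together with the discussion preceding the Proposition shows that $\widetilde u:=u\circ\bl s=\widetilde f+\overline{\widetilde g}$ and $\widetilde v:=v\circ\bl s=\widetilde h+\overline{\widetilde k}$ are bounded, $\mathfrak{S}_d$-invariant pluriharmonic functions on $\mb D^d$, where $\widetilde f=f\circ\bl s$, $\widetilde g=g\circ\bl s$, $\widetilde h=h\circ\bl s$ and $\widetilde k=k\circ\bl s$. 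Because $\mathfrak{S}_d$ has exactly the two one-dimensional representations $\tr$ and $\sgn$, with $\omega_\sgn\equiv1$ (so that $\mb A^2_{\omega_\sgn}(\mb G_d)=\mb A^2(\mb G_d)$) and $\omega_\tr$ as in the text, Theorem~\ref{main2}, applied with $\Omega=\mb D^d$, $G=\mathfrak{S}_d$ and $\omega\equiv1$, shows that the three assertions ``$T_uT_v=T_vT_u$ on $\mb A^2(\mb G_d)$'', ``$T_uT_v=T_vT_u$ on $\mb A^2_{\omega_\tr}(\mb G_d)$'' and ``$T_{\widetilde u}T_{\widetilde v}=T_{\widetilde v}T_{\widetilde u}$ on $\mb A^2(\mb D^d)$'' are equivalent.

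Next I would invoke the characterization of commuting Toeplitz operators with bounded pluriharmonic symbols on $\mb A^2(\mb D^d)$ due to Choe et al.\ \cite{MR2031039}: $T_{\widetilde u}T_{\widetilde v}=T_{\widetilde v}T_{\widetilde u}$ on $\mb A^2(\mb D^d)$ if and only if $\widetilde f\,\overline{\widetilde k}-\widetilde h\,\overline{\widetilde g}$ is $d$-harmonic on $\mb D^d$, i.e.\ $\partial_{z_l}\widetilde f\cdot\overline{\partial_{z_l}\widetilde k}=\partial_{z_l}\widetilde h\cdot\overline{\partial_{z_l}\widetilde g}$ on $\mb D^d$ for every $l=1,\dots,d$. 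Since $\widetilde f\,\overline{\widetilde k}-\widetilde h\,\overline{\widetilde g}=(f\overline k-h\overline g)\circ\bl s$, this is exactly the second condition in the definition of $\m X(\mb G_d)$; moreover, by the chain rule (as in the proof of Lemma~\ref{ph}), if $f\overline k-h\overline g$ is $\mathfrak{S}_d$-pluriharmonic on $\mb G_d$ then its pullback is pluriharmonic, hence $d$-harmonic, on $\mb D^d$. Combining this with the first paragraph already yields items~(1) and~(2), and, more generally, the implication that $f\overline k-h\overline g\in\m X(\mb G_d)$ forces $T_uT_v=T_vT_u$ on $\mb A^2(\mb G_d)$ and on $\mb A^2_{\omega_\tr}(\mb G_d)$.

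For the reverse implication in the ``Moreover'' part I would argue as follows. Assuming commutativity, the two previous paragraphs give that the holomorphic functions $F_l:=\partial_{z_l}\widetilde f$, $G_l:=\partial_{z_l}\widetilde g$, $H_l:=\partial_{z_l}\widetilde h$ and $K_l:=\partial_{z_l}\widetilde k$ on $\mb D^d$ satisfy $F_l\overline{K_l}=H_l\overline{G_l}$ for every $l$. Running the one-variable factorization of Axler and Cu{c}kovi\'{c} \cite{MR1079815} in each variable separately---on the dense open set where the relevant holomorphic functions are nonvanishing, a quotient that is simultaneously holomorphic and antiholomorphic is constant, and the zero set of a holomorphic function on $\mb D^d$ is negligible---one finds, for each fixed $l$, that either $F_l\equiv H_l\equiv0$, or $G_l\equiv K_l\equiv0$, or $H_l=c_lF_l$ and $K_l=\overline{c_l}\,G_l$ on $\mb D^d$ for some constant $c_l$. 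The $\mathfrak{S}_d$-invariance of $\widetilde f,\widetilde g,\widetilde h,\widetilde k$ forces $F_l(\bl z)=F_1(\tau_l\bl z)$, and likewise for $G_l,H_l,K_l$, where $\tau_l=(1\,l)\in\mathfrak{S}_d$; hence the alternative that holds at $l=1$ holds at every $l$, with one common constant $c$. In every case $f\overline k-h\overline g$ then reduces on $\mb G_d$ to a sum of a holomorphic function and an antiholomorphic function---for instance, when $h=cf+a$ and $k=\overline c\,g+b$ one computes $f\overline k-h\overline g=\overline b\,f-a\,\overline g$---so $\partial_{s_i}\partial_{\overline{s}_j}(f\overline k-h\overline g)=\partial_{s_i}f\cdot\overline{\partial_{s_j}k}-\partial_{s_i}h\cdot\overline{\partial_{s_j}g}=0$ for all $i,j$; that is, $f\overline k-h\overline g$ is $\mathfrak{S}_d$-pluriharmonic on $\mb G_d$. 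With the $d$-harmonicity of its pullback this gives $f\overline k-h\overline g\in\m X(\mb G_d)$, and together with the second paragraph the equivalence follows. I would also record that the statement is independent of the choice of holomorphic decompositions $u=f+\overline g$, $v=h+\overline k$, since any other choice changes $f\overline k-h\overline g$ by a sum of a holomorphic and an antiholomorphic function on $\mb G_d$.

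The step I expect to be the main obstacle is the one carried out in the third paragraph. The polydisc theorem only delivers the \emph{diagonal} identities $F_l\overline{K_l}=H_l\overline{G_l}$, equivalently the $d$-harmonicity of the pullback, which is a coordinate-dependent and, a priori, strictly weaker condition than $\mathfrak{S}_d$-pluriharmonicity of $f\overline k-h\overline g$ on $\mb G_d$ (indeed a symmetric $d$-harmonic ``mixed product'' need not be pluriharmonic, e.g.\ $\sum_{i\ne j}z_i\overline{z_j}$ on $\mb D^d$). It is precisely the $\mathfrak{S}_d$-symmetry of the symbols that forces the per-variable constants $c_l$ to coincide, thereby upgrading $d$-harmonicity of the pullback to genuine pluriharmonicity downstairs; organizing this synchronization and the attendant case analysis into a clean argument is the technical core.
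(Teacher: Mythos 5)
Your first two paragraphs reproduce the paper's argument: pull back via $\bl s$, note that $\mathfrak{S}_d$-pluriharmonicity of $f\ov k - h\ov g$ forces $d$-harmonicity of $\widetilde f\,\ov{\widetilde k}-\widetilde h\,\ov{\widetilde g}$ (Lemma \ref{ph}), apply the Choe et al.\ characterization on $\mb A^2(\mb D^d)$, and transfer back and forth with Theorem \ref{main2}; this is exactly how the paper obtains (1), (2) and the ``if'' half of the ``Moreover''. The genuine divergence is your third paragraph. The paper's converse stops after observing that commutativity yields $d$-harmonicity of $\widetilde f\,\ov{\widetilde k}-\widetilde h\,\ov{\widetilde g}$ and asserts that ``the result follows,'' even though membership in $\m X(\mb G_d)$ also demands that $f\ov k-h\ov g$ be $\mathfrak{S}_d$-pluriharmonic on $\mb G_d$ itself --- a condition the paper elsewhere admits is not obviously implied by $d$-harmonicity of the pullback. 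You correctly isolate this as the substantive step and supply an argument: the diagonal identities $F_l\ov{K_l}=H_l\ov{G_l}$, the standard holomorphic/antiholomorphic quotient argument on the connected dense open set where $F_lG_l\neq 0$, and the symmetry relation $F_l(\bl z)=F_1(\tau_l\bl z)$ synchronize the per-variable constants into one, whence $f\ov k-h\ov g$ reduces to a holomorphic plus an antiholomorphic function on $\mb G_d$. This is a real strengthening of the written proof, and your closing remark that the criterion is independent of the chosen decompositions is a worthwhile sanity check the paper omits.

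One small repair is needed: your trichotomy is not exhaustive. The case $F_l\equiv G_l\equiv 0$ with $H_l,K_l$ not both identically zero (equivalently, $u$ constant) satisfies $F_l\ov{K_l}=H_l\ov{G_l}$ yet falls under none of your three alternatives, since $H_l=c_lF_l$ would wrongly force $H_l\equiv 0$. It is harmless --- then $f$ and $g$ are constant and $f\ov k-h\ov g$ is again holomorphic plus antiholomorphic --- but it must be listed for the case analysis to close. With that amendment your argument is complete and, on the converse direction, more careful than the paper's own.
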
\begin{proof}

Let $u \circ \bl s(\bl z)=\widetilde{u}(\bl z) =  \widetilde{f}(\bl z)+\ov{\widetilde{g}}(\bl z)$ and $v \circ \bl s(\bl z)=\widetilde{v}(\bl z) =  \widetilde{h}(\bl z)+\ov{\widetilde{k}}(\bl z).$ From the assumption, it follows that $\widetilde{u}$ and $\widetilde{v} \in L^\infty(\mb D^d)$ and those are pluriharmonic on $\mb D^d.$ Note that $f\ov{k} - h\ov{g}$ is $\mathfrak{S}_d$-pluriharmonic on $\mb G_d$ implies that $\widetilde{f}\ov{\widetilde{k}} - \widetilde{h}\ov{\widetilde{g}}$ is $d$-harmonic on $\mb D^d$. Therefore, $T_{\widetilde{u}}T_{\widetilde{v}} = T_{\widetilde{v}}T_{\widetilde{u}}$ on $\mb A^2(\mb D^d)$ from \cite[p. 1728, Theorem 1.1]{MR2031039} and then using Theorem \ref{main2}, we get \emph{1.} and \emph{2.}

Conversely, if $T_uT_v = T_vT_u$ on $\mb A^2(\mb G_d)$ (or on $\mb A^2_{\omega_{\tr}}(\mb G_d)$), then combining Theorem \ref{main2} and \cite[p. 1728, Theorem 1.1]{MR2031039}, one gets $\widetilde{f}\ov{\widetilde{k}} - \widetilde{h}\ov{\widetilde{g}}$ is $d$-harmonic on $\mb D^d$. Thus the result follows.
\end{proof}


\subsection{Rudin's domains}
A family of quotient domains of the form $\mb B_d / G$ is described in \cite{MR667790}, where the group $G$ is a conjugate to a finite pseudoreflection group. Following \cite[p. 427]{MR742433}, we refer to such domains as Rudin's domains. The domain $\Omega \subset \mb C^d$ is a Rudin's domain if and only if there exists a proper holomorphic map $\bl F: \mb B_d \to \Omega.$
\subsubsection{Zero-product problem} We have the following zero-product theorem for Toeplitz operators on the weighted Bergman space on Rudin's domains using Theorem \ref{result}.
\begin{cor}\label{rud}
Let $\Omega$ be a Rudin's domain which is biholomorphic to $\mb B_d/G$ for some finite pseudoreflection group $G.$ Suppose that $u, v \in h_G^\infty(\Omega)$ are continuous on $\Omega \cup W$ for some relatively open subset $W \text{~of~} \del_S\Omega.$ If $T_uT_v=0$ on $\mb A_{\omega_\varrho}^2(\Omega)$ for a one-dimensional representation $\varrho$ of $G,$ then either $u=0$ or $v=0.$
\end{cor}
In particular for the sign representation, if $T_uT_v=0$ on $\mb A^2(\Omega)$ then either $u=0$ or $v=0,$ provided $u$ and $v$ follow the assumption in Corollary \ref{rud}.

\subsubsection{Commuting pairs of Toeplitz operators} 
The following result describes necessary and sufficient conditions for a pair of commuting Toeplitz operators on the Bergman spaces of Rudin's domains. We reduce the problem to the characterization of commuting Toeplitz operators on the Bergman space on the unit ball and then a direct application of \cite[p. 1597, Theorem 2.1]{MR1443898} and \cite[Theorem 3.1]{MR4404033} proves the result. 
\begin{prop}
Let $\Omega$ be a Rudin's domain which is biholomorphic to $\mb B_d/G$ for some finite pseudoreflection group $G.$ Suppose that $u, v \in h_G^\infty(\Omega),$ then $$T_uT_v = T_v T_u$$ on $\mb A_{\omega_\varrho}^2(\Omega)$ for a one-dimensional representation $\varrho$ of $G,$ if and only if one of the following holds:
\begin{enumerate}
    \item Both $u$ and $v$ are holomorphic on $\Omega.$
    \item Both $\ov{u}$ and $\ov{v}$ are holomorphic on $\Omega.$
    \item Either $u$ or $v$ is constant on $\Omega.$
    \item For a nonzero constant $b,$ $u-bv$ is constant on $\Omega.$ 
\end{enumerate}
\end{prop}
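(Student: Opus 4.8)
The plan is to transfer the problem from $\Omega$ to $\mathbf B_d$ via the basic polynomial map and then quote the known classification of commuting Toeplitz operators with pluriharmonic symbols on $\mathbf A^2(\mathbf B_d)$. First I would observe that since $\Omega$ is biholomorphic to $\mathbf B_d/G$, fixing a basic polynomial map $\boldsymbol\theta$ associated to $G$ we may work with $\boldsymbol\theta(\mathbf B_d)$ in place of $\Omega$. Given $u,v\in h^\infty_G(\Omega)$, set $\widetilde u=u\circ\boldsymbol\theta$ and $\widetilde v=v\circ\boldsymbol\theta$; by Lemma \ref{ph} these are bounded pluriharmonic functions on $\mathbf B_d$, and they are $G$-invariant. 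By Theorem \ref{main2}, $T_uT_v=T_vT_u$ on $\mathbf A^2_{\omega_\varrho}(\boldsymbol\theta(\mathbf B_d))$ for one one-dimensional representation $\varrho$ if and only if $T_{\widetilde u}T_{\widetilde v}=T_{\widetilde v}T_{\widetilde u}$ on $\mathbf A^2(\mathbf B_d)$ (with $\omega\equiv 1$).

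Next I would invoke Zheng's theorem \cite[p.~1597, Theorem 2.1]{MR1443898}: for bounded pluriharmonic $\widetilde u,\widetilde v$ on $\mathbf B_d$, the operators $T_{\widetilde u}$ and $T_{\widetilde v}$ commute on $\mathbf A^2(\mathbf B_d)$ if and only if one of the four alternatives holds — both $\widetilde u,\widetilde v$ holomorphic; both $\overline{\widetilde u},\overline{\widetilde v}$ holomorphic; one of them constant; or $\widetilde u-b\widetilde v$ constant for some nonzero $b$. The remaining task is to show that each of these four conditions on $\widetilde u,\widetilde v$ is equivalent to the corresponding condition on $u,v$ on $\Omega$. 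For instance, $\widetilde u=u\circ\boldsymbol\theta$ is holomorphic on $\mathbf B_d$ and $G$-invariant iff $u$ is holomorphic on $\boldsymbol\theta(\mathbf B_d)$; this is exactly the content of the holomorphic Chevalley–Shephard–Todd statement \cite[Theorem 3.1]{MR4404033} (as used already in the symmetrized-polydisc subsection), applied to the invariant holomorphic functions in the pluriharmonic decomposition. The constant case is immediate since $\boldsymbol\theta$ is surjective onto $\boldsymbol\theta(\mathbf B_d)$, and the affine relation $u-bv$ constant transfers verbatim because precomposition with $\boldsymbol\theta$ is linear. Conjugate-holomorphic cases follow by conjugating.

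Concretely the steps are: (1) reduce to $\Omega=\boldsymbol\theta(\mathbf B_d)$ and pass to $\widetilde u,\widetilde v$ on $\mathbf B_d$; (2) apply Lemma \ref{ph} to get pluriharmonicity and boundedness of $\widetilde u,\widetilde v$; (3) apply Theorem \ref{main2} to translate the commutation relation on $\mathbf A^2_{\omega_\varrho}(\Omega)$ into one on $\mathbf A^2(\mathbf B_d)$; (4) apply Zheng's classification on $\mathbf B_d$; (5) translate each of the four resulting conditions back to $\Omega$ using \cite[Theorem 3.1]{MR4404033} and surjectivity/linearity of $\boldsymbol\theta$.

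The main obstacle I anticipate is step (5), specifically the bookkeeping needed to descend the four conditions from $\mathbf B_d$ to $\Omega$. One must write $\widetilde u=\widetilde f+\overline{\widetilde g}$ with $\widetilde f,\widetilde g$ holomorphic, argue that $G$-invariance of $\widetilde u$ forces $G$-invariance of $\widetilde f$ and $\widetilde g$ separately (the standard averaging/uniqueness argument for the pluriharmonic decomposition on $\mathbf B_d$), and only then apply \cite[Theorem 3.1]{MR4404033} to write $\widetilde f=f\circ\boldsymbol\theta$, $\widetilde g=g\circ\boldsymbol\theta$ with $f,g\in\mathcal O(\Omega)$. One should also check that Zheng's condition ``$\widetilde u-b\widetilde v$ constant'' descends: since $\boldsymbol\theta$ is onto $\Omega$, $\widetilde u-b\widetilde v=(u-bv)\circ\boldsymbol\theta$ is constant iff $u-bv$ is constant on $\Omega$. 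These are all routine once organized, so I would expect the proof to be short, essentially a careful assembly of Theorem \ref{main2}, Lemma \ref{ph}, \cite[Theorem 2.1]{MR1443898}, and \cite[Theorem 3.1]{MR4404033}.
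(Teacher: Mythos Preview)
Your proposal is correct and follows essentially the same route as the paper: transfer the problem to $\mathbf B_d$ via Lemma \ref{ph} and Theorem \ref{main2}, apply Zheng's classification \cite[Theorem 2.1]{MR1443898}, and then descend the four alternatives back to $\Omega$ using \cite[Theorem 3.1]{MR4404033} together with surjectivity of $\boldsymbol\theta$. The paper's own proof is somewhat terser in step (5) (it simply asserts that holomorphy of $\widetilde u$ on $\mathbf B_d$ forces holomorphy of $u$ on $\Omega$ by \cite[Theorem 3.1]{MR4404033}, without spelling out the pluriharmonic decomposition), but your more detailed bookkeeping is entirely consistent with it.
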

\begin{proof}
Note that for $u$ and $v \in h_G^\infty(\Omega),$ there exist bounded pluriharmonic functions $\widetilde{u}=u \circ \bl F$ and $\widetilde{v} = v \circ \bl F$ on $\mb B_d$, where $\bl F:\mb B_d \to \Omega$ is a proper holomorphic map. Clearly, if $u$ (or $\ov{u}$) is holomorphic on $\Omega$, then $\widetilde{u}$ (or $\ov{\widetilde{u}}$) is holomorphic on $\mb B_d.$ On the other hand, if $\widetilde{u}$ (or $\ov{\widetilde{u}}$) is holomorphic on $\mb B_d,$ then so is $u$ (or $\ov{u}$) on $\Omega$ from \cite[Theorem 3.1]{MR4404033}.

Also, $T_uT_v = T_v T_u$ on $\mb A_{\omega_\varrho}^2(\Omega)$ for a one-dimensional representation $\varrho$ of $G$ implies $T_{\widetilde{u}}T_{\widetilde{v}}=T_{\widetilde{v}}T_{\widetilde{u}}$ on $\mb A^2(\mb B_d)$ (cf. Theorem \ref{main2}) and vice-versa. From \cite[p. 1597, Theorem 2.1]{MR1443898}, it is known that  $T_{\widetilde{u}}T_{\widetilde{v}}=T_{\widetilde{v}}T_{\widetilde{u}}$ on $\mb A^2(\mb B_d)$ if and only if one of the following holds:
\begin{enumerate}
    \item Both $\widetilde{u}$ and $\widetilde{v}$ are holomorphic on $\mb B_d.$
    \item Both $\ov{\widetilde{u}}$ and $\ov{\widetilde{v}}$ are holomorphic on $\mb B_d.$
    \item Either $\widetilde{u}$ or $\widetilde{v}$ is constant on $\mb B_d.$
    \item For a nonzero constant $b,$ $\widetilde{u}-b\widetilde{v}$ is constant on $\mb B_d.$ 
\end{enumerate}
Thus the result follows.
\end{proof}
\subsection{Monomial polyhedrons}
For $d \geq 2,$ a $d$-tuple $\bl \alpha = (\alpha_1,\ldots,\alpha_d) \in \mb Q^d$ of rational numbers and a $d$-tuple of complex numbers $\bl z = (z_1,\ldots,z_d) \in \mb C^d$, we denote $\bl z^{\bl \alpha} := \displaystyle\prod_{k=1}^d z_k^{\alpha_k}.$ Consider a matrix $B \in M_d(\mb Q).$  We enumerate the row vectors of $B$ by $\mathcal F = \{\bl b^1,\ldots,\bl b^d\},$ where $\bl b^k = (b_1^k,\ldots,b_d^k).$ The monomial polyhedron associated to $B$ is defined by $$\mathscr U = \{ \bl z \in \mb C^d : |{\bl z}^{\bl b^k}| <1 \text{~for all ~} 1\leq k \leq d\},$$ unless for some $1 \leq k,j \leq d$, the quantity $ z_j^{b_j^k}$ is not defined due to the division of zero \cite[Equation 1.1]{bender2020lpregularity}. 

Without loss of generality, we  assume that $B \in M_d(\mb Z),~ \det(B)>0$ and $B^{-1} \succeq 0$ \cite[Equation 3.3]{bender2020lpregularity}. Set  $A = \adj B.$ The Smith Normal form of the matrix $A$ is given by $A = PDQ,$ where $P,Q \in GL_d(\mb Z)$ and $D ={\rm diag}(\delta_1,\ldots,\delta_d) \in M_d(\mb Z).$ Then $\m U$ is biholomorphically equivalent to $\mb D^d_{L(B)}/G,$ where $\mb D^d_{L(B)}$ is the product of some copies of the unit disc with some copies of the punctured unit disc and $G$ is isomorphic to the direct product of cyclic groups $\prod_{i=1}^d \mb Z / {\delta i} \mb Z$ \cite{bender2020lpregularity}. It implies that every irreducible representation of $G$ is one-dimensional.

 A point $\bl w_0$ belongs to the minimum boundary of $\mathscr U$ if and only $\bl w_0$ is an isolated point of the variety $V_0 = \{\bl w : {\bl w}^{\bl b^{k_i}} = {\bl w_0}^{\bl b^{k_i}} \text{~for all ~} 1\leq i \leq d_0 \}$ for the indices $k_1,\ldots,k_{d_0}$ such that $|{\bl w_0}^{\bl b^{k_i}}| =1.$  The Shilov boundary $\del_S \mathscr U$ of the monomial polyhedron $\mathscr U$ is the closure of the minimum boundary of $\mathscr U$ \cite[p. 1348]{MR155010}. 

\begin{cor}
Suppose that $u, v \in h_{G}^\infty(\mathscr U)$ are continuous on $\mathscr U \cup W$ for some relatively open subset $W \text{~of~} \del_S{\mathscr U}.$ If $T_uT_v=0$ on $\mb A_{\omega_\varrho}^2(\mathscr U),\,\, \varrho \in \widehat{G},$ then either $u=0$ or $v=0.$
\end{cor}
This is a direct application of Theorem \ref{result}.


\section{Generalized zero-product problem on the weighted Bergman space}\label{genzer}

In this section, using the results from \cite{MR4295248} and following the methods in \cite{MR2262785}, we prove Theorem \ref{expoly}. For $\alpha > -1,$ the continuous function $\omega_\alpha: \mb D \to (0,\infty)$ is defined by $\omega_\alpha(z) = (\alpha +1 ) (1-|z|^2)^\alpha.$ 
The weighted Bergman kernel $K^{(\alpha)}$ of $\mb A^2_{\omega_\alpha}(\mb D)$ is given by \Bea  K^{(\alpha)} (z,w) = \frac{1}{(1-z\ov{w})^{\alpha +2}}, \,\, z,w \in \mb D. \Eea 

We recall the Berezin type operators from \cite[p. 29]{MR1758653}. For $\alpha >-1,$ the Berezin type operator $B_\alpha,$ defined on $L^1(\mb D,\omega_\alpha dV),$ is given by 
\Bea
B_\alpha f(z) = \int_{\mb D} f \circ \phi_z(w) \omega_\alpha(w) dV(w),
\Eea
where $\phi_z(w)= \frac{w-z}{1-\ov{z}w}.$
However, we use the following expression of Berezin type operators more often (which is obtained after a change of variable):
\bea\label{berezin2}
B_\alpha f(z) = (\alpha +1)\int_{\mb D} \frac{(1-|z|^2)^{\alpha +2} (1-|w|^2)^\alpha}{|1-z\ov{w}|^{4+2\alpha}} f (w) dV(w).
\eea

We collect two results from \cite{MR4295248} which will be needed.
In the following $\widetilde{\Delta} = (1-|z|^2)^2 \frac{\del^2}{\del z \del \ov{z}},$ is the invariant Laplacian of $\mb D.$ 

\begin{thm}\label{exone}

\begin{enumerate}

\item Let $B_{\bl \alpha}(v) = f \overline{g}$ where $f$ and $g$ are holomorphic functions on $\mb D$ and $v \in L^1(\mb D, \omega_\alpha dV).$ Then either $f$ or $g$ is a constant.

\item Suppose that $\alpha$ is a non-negative integer. Let $f$ and $g$ be bounded harmonic functions on $\mb D$ and $h$ be a bounded $C^{2\alpha}$ function on $\mb D$ such that $\widetilde{\Delta}h,\ldots,\widetilde{\Delta}^{\alpha}h \in L^1(\mb D, \omega_\alpha dV)$. If $T_fT_g = T_h$ on $\mb A_{\omega_\alpha}^2(\mb D)$ then either $f$ is co-analytic or $g$ is analytic.
\end{enumerate}
\end{thm}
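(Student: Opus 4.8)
Both assertions are statements about the weighted Berezin transform $B_\alpha$ of \eqref{berezin2}, and the plan is to prove the range statement (1) first and then reduce the operator identity (2) to it. Throughout I write $k_z$ for the normalized reproducing kernel of $\mb A^2_{\omega_\alpha}(\mb D)$, so that $\widetilde{T_\phi}(z)=\langle T_\phi k_z,k_z\rangle=B_\alpha\phi(z)$ for every $\phi\in L^\infty(\mb D)$, and I record the two elementary computations that drive the reduction: for $\phi\in\m O(\mb D)$ one has $T_{\ov\phi}k_z=\ov{\phi(z)}\,k_z$, and for $\psi,\chi\in\m O(\mb D)$ one has $\langle\psi k_z,\chi k_z\rangle=B_\alpha(\psi\ov\chi)(z)$ --- the latter being exactly the integral \eqref{berezin2} after using $|K^{(\alpha)}(w,z)|^2/K^{(\alpha)}(z,z)=(1-|z|^2)^{\alpha+2}|1-z\ov w|^{-2\alpha-4}$.

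For part (1) the plan is to exploit that $B_\alpha$ is covariant under $\Aut(\mb D)$, i.e. $B_\alpha(v\circ\varphi)=(B_\alpha v)\circ\varphi$ for $\varphi\in\Aut(\mb D)$, so that $B_\alpha$ is a function of the invariant Laplacian $\widetilde\Delta$ and is diagonalised in the rotation-covariant monomial system. Expanding $f=\sum_m a_m z^m$ and $g=\sum_n b_n z^n$, I would compute the action of $B_\alpha$ on each $z^m\ov z^n$ (a nonzero Pochhammer ratio in $\alpha$ times $z^m\ov z^n$, together with lower-order mixed monomials $z^{m-k}\ov z^{n-k}$, $k\ge1$) and match the coefficient of $z^m\ov z^n$ on the two sides of $B_\alpha v=f\ov g=\sum_{m,n}a_m\ov{b_n}z^m\ov z^n$. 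This yields an explicit triangular system for the Taylor data of $v$, and the point to be extracted is that the rank-one array $(a_m\ov{b_n})_{m,n}$ can lie in the range only degenerately, forcing $a_m=0$ for all $m\ge1$ or $b_n=0$ for all $n\ge1$; equivalently, applying $\del\ov\del$ and using $\widetilde\Delta(f\ov g)=(1-|z|^2)^2f'\ov{g'}$, one forces $f'\ov{g'}\equiv0$. Controlling the Gamma/Pochhammer constants that $B_\alpha$ attaches to the mixed monomials well enough to rule out a nondegenerate rank-one array in the range is the genuinely analytic heart of the whole theorem.

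For part (2), I would first compute the Berezin transform of $T_fT_g$. Writing the bounded harmonic symbols as $f=f_1+\ov{f_2}$ and $g=g_1+\ov{g_2}$ with $f_i,g_i\in\m O(\mb D)$, the two identities above give, after expanding $\widetilde{T_fT_g}(z)=\langle T_gk_z,T_{\ov f}k_z\rangle$,
\[
\widetilde{T_fT_g}(z)=f_1(z)g_1(z)+f_1(z)\ov{g_2(z)}+\ov{f_2(z)g_2(z)}+B_\alpha(g_1\ov{f_2})(z).
\]
Since $B_\alpha$ fixes the holomorphic function $f_1g_1$ and the anti-holomorphic function $\ov{f_2g_2}$, while the last term is already a Berezin transform, the hypothesis $T_fT_g=T_h$ --- which forces $\widetilde{T_fT_g}=\widetilde{T_h}=B_\alpha h$ --- rearranges by linearity of $B_\alpha$ into
\[
f_1\ov{g_2}=B_\alpha\bigl(h-f_1g_1-\ov{f_2g_2}-g_1\ov{f_2}\bigr).
\]
The inner function lies in $L^1(\mb D,\omega_\alpha\,dV)$ (each holomorphic factor is the holomorphic part of a bounded harmonic function, hence in $\mb A^2_{\omega_\alpha}(\mb D)$, and Cauchy--Schwarz controls the products), so $f_1\ov{g_2}$ is in the range of $B_\alpha$ and has the form (holomorphic)$\times$(anti-holomorphic). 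Part (1) then forces $f_1$ or $g_2$ to be constant, i.e. $f$ co-analytic or $g$ analytic, which is the assertion.

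The role of the regularity hypotheses $h\in C^{2\alpha}$ and $\widetilde\Delta h,\dots,\widetilde\Delta^{\alpha}h\in L^1(\mb D,\omega_\alpha\,dV)$, for integer $\alpha$, is to legitimize the Berezin-transform bookkeeping above --- in particular the splitting of the unbounded operators $T_{f_i},T_{\ov{g_i}}$ on the kernels and the passage through $\widetilde{T_h}=B_\alpha h$ --- along the lines of \cite{MR2262785}, integer $\alpha$ being precisely the range in which $B_\alpha$ is controlled through finitely many powers of $\widetilde\Delta$. Thus the two genuine difficulties are the degeneracy analysis in part (1), and, in part (2), carrying out the reduction rigorously under only the stated $L^1$ conditions rather than pointwise smoothness, which is exactly what the hypotheses on $\widetilde\Delta^{n}h$ are tailored to provide.
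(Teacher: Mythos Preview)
Your reduction of (2) to (1) is correct and matches the paper: writing $f=f_1+\overline{f_2}$, $g=g_1+\overline{g_2}$ and taking the Berezin transform of both sides of $T_fT_g=T_h$ yields $f_1\overline{g_2}=B_\alpha\bigl(h-f_1g_1-\overline{f_2g_2}-\overline{f_2}\,g_1\bigr)$, after which (1) finishes. This is the one-variable version of the paper's Lemma~\ref{pr}.

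The gap is part (1). Your monomial-matching plan is only a sketch with the hard step explicitly left open, and the alternative you float---apply $\widetilde\Delta$ and use $\widetilde\Delta(f\overline g)=(1-|z|^2)^2f'\overline{g'}$---does not force $f'\overline{g'}\equiv0$: it only gives $B_\alpha(\widetilde\Delta v)=(1-|z|^2)^2f'\overline{g'}$, and the factor $(1-|z|^2)^2$ destroys the (holomorphic)$\times$(antiholomorphic) structure, so nothing iterates. The actual mechanism in \cite{MR4295248} is different and is precisely what the hypotheses in (2) are tailored to. One combines the recursion $B_{\beta+1}=\bigl(I-\tfrac{\widetilde\Delta}{(\beta+1)(\beta+2)}\bigr)B_\beta$ from \cite{MR1758653} with the commutation $\widetilde\Delta B_\beta=B_\beta\widetilde\Delta$ to obtain, for a non-negative integer $\alpha$,
\[
f_1\overline{g_2}\;=\;B_\alpha v\;=\;B_0\Bigl(\sum_{l=0}^{\alpha}c_l\,\widetilde\Delta^{\,l} v\Bigr),
\]
and then the unweighted range theorem of Ahern and \v{C}u\v{c}kovi\'c \cite{MR1867348} (or Ahern \cite{MR2085115}) forces $f_1$ or $g_2$ to be constant. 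The assumptions $\widetilde\Delta h,\ldots,\widetilde\Delta^{\alpha}h\in L^1(\mb D,\omega_\alpha\,dV)$ are exactly what make each $\widetilde\Delta^{\,l}v$ integrable so that this descent to $B_0$ is legitimate; they are \emph{not} needed to ``legitimize $B_\alpha h$'' or the kernel computations, which are automatic for bounded $h$. So your account of why integer $\alpha$ and the $\widetilde\Delta^k h$ conditions enter is off, and your outline for (1) does not contain the device that actually proves it.
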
 

\begin{proof}
See \cite[Theorem 10]{MR4295248} and its proof.
\end{proof}

 
\subsection{Weighted Bergman spaces on the polydisc} 
Consider $d>1.$ Let us denote $\bl \alpha = (\alpha_1,\ldots,\alpha_d),$ where $\alpha_i$'s are non-negative integers. The continuous function $\omega_{\bl \alpha}: \mb D^d \to (0,\infty)$ is defined by $\omega_{\bl \alpha}(\bl z) = \prod_{i=1}^d(\alpha_i +1 ) (1-|z_i|^2)^{\alpha_i}.$ The weighted Bergman space $\mb A^2_{\omega_{\bl \alpha}}(\mb D^d)$ is a reproducing kernel Hilbert space with the reproducing kernel $$\bl K^{(\bl \alpha)}(\bl z, \bl w) = \prod_{i=1}^d\frac{1}{(1-z_i\ov{w}_i)^{\alpha_i +2}}.$$ Before proving Theorem \ref{expoly}, we recall a few relevant notions.

For simplicity, we denote the weighted Bergman projection by $P_{\bl \alpha} : L^2_{\omega_{\bl \alpha}}(\mb D^d) \to  \mb A^2_{\omega_{\bl \alpha}}(\mb D^d).$ For $g \in \mb A^2_{\omega_{\bl \alpha}}(\mb D^d)$ and ${\bl w} \in \mb D^d,$ we have: $$P_{\bl \alpha}(\ov{g}\bl K^{(\bl \alpha)}_{\bl w})({\bl z}) = \inner{\ov{g}\bl K^{(\bl \alpha)}_{\bl w}}{\bl K^{(\bl \alpha)}_{\bl z}} = \inner{\bl K^{(\bl \alpha)}_{\bl w}}{g\bl K^{(\bl \alpha)}_{\bl z}} = \ov{g}({\bl w})\bl K^{(\bl \alpha)}_{\bl w}({\bl z}).$$. Hence \bea\label{bar} P_{\bl \alpha}(\ov{g}\bl K^{(\bl \alpha)}_{\bl w}) = \ov{g}({\bl w})\bl K^{(\bl \alpha)}_{\bl w} \, \text{and} \,P_{\bl \alpha}(g\bl K^{(\bl \alpha)}_{\bl w}) = g \bl K^{(\bl \alpha)}_{\bl w}.\eea

For $f \in L^1(\mb D^d,\omega_{\bl \alpha} dV),$ the Berezin type operators are given by 
\bea
B_{\bl \alpha} f(\bl z) = \int_{\mb D^d} \prod_{i=1}^d\left((\alpha_i +1)\frac{(1-|z_i|^2)^{\alpha_i +2} (1-|w_i|^2)^{\alpha_i}}{|1-z_i\ov{w_i}|^{4+2\alpha_i}} \right) f (\bl w) dV(\bl w).
\eea In addition to it, if $f \in \mb A^2_{\omega_{\bl \alpha}}(\mb D^d),$ then $B_{\bl \alpha} f(\bl z) = \prod_{i=1}^d(1-|z_i|^2)^{\alpha_i+2} \inner{f\bl K^{(\bl \alpha)}_{\bl z}}{\bl K^{(\bl \alpha)}_{\bl z}} = \prod_{i=1}^d(1-|z_i|^2)^{\alpha_i+2}f(\bl z)\bl K^{(\bl \alpha)}(\bl z,\bl z) =f(\bl z).$ Similarly one gets $B_{\bl \alpha} \ov{f} = \ov{f}.$

\begin{lem}\label{pr}
Suppose that $f = f_1 + \ov{f_2},$ $g=g_1 + \ov{g_2}$ are bounded harmonic functions with $f_i,g_i$ holomorphic and $h$ is bounded in $\mb D^d.$ Then the following are equivalent.
\begin{enumerate}
    \item $T_fT_g = T_h$
    \item For all ${\bl z},{\bl w} \in \mb D^d,$ \Bea && f_1({\bl z})g_1({\bl z}) + \ov{f_2}(\ov{{\bl w}})\ov{g_2}(\ov{{\bl w}})+f_1({\bl z})\ov{g_2}(\ov{{\bl w}})\\&=&  \int_{\mb D^d} \prod_{i=1}^d \left(\frac{(1-z_iw_i)^{\alpha_i +2 }}{(1-z_i\ov{\eta_i})^{\alpha_i+2}(1-w_i\eta_i)^{\alpha_i+2}}\right) (h(\bl \eta)-\ov{f_2}(\bl \eta)g_1(\bl \eta))\omega_{\bl \alpha}(\bl \eta) dV(\bl \eta).\Eea
\item For all ${\bl z} \in \mb D^d,$ $$f_1({\bl z})g_1({\bl z}) + \ov{f_2}({\bl z})\ov{g_2}({\bl z})+f_1({\bl z})\ov{g_2}({\bl z}) = B_{\bl \alpha}(h-\ov{f_2}g_1)({\bl z}).$$ \end{enumerate} 
\end{lem}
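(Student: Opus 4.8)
I would prove the chain of equivalences (1) $\Leftrightarrow$ (2) $\Leftrightarrow$ (3) by first reducing the operator identity $T_fT_g=T_h$ to a statement about reproducing kernels, then recognizing the kernel integral as a Berezin transform. The starting point is the standard fact that two bounded operators on $\mb A^2_{\omega_{\bl \alpha}}(\mb D^d)$ agree if and only if their ``Berezin-type'' bilinear forms $\langle A \bl K^{(\bl \alpha)}_{\bl w}, \bl K^{(\bl \alpha)}_{\bl z}\rangle$ agree for all $\bl z,\bl w$, because the normalized kernels span a dense set. So I would compute $\langle T_fT_g \bl K^{(\bl \alpha)}_{\bl w}, \bl K^{(\bl \alpha)}_{\bl z}\rangle$ and $\langle T_h \bl K^{(\bl \alpha)}_{\bl w}, \bl K^{(\bl \alpha)}_{\bl z}\rangle$ explicitly.

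\medskip
\noindent\textbf{Step 1: Unwinding the left-hand side.} Write $f=f_1+\ov{f_2}$, $g=g_1+\ov{g_2}$ with $f_i,g_i$ holomorphic. For the Toeplitz operator with an anti-holomorphic symbol, $T_{\ov{g_2}}$ acts as multiplication followed by projection, and since $\ov{g_2}\bl K^{(\bl \alpha)}_{\bl w}$ projects to $\ov{g_2}(\bl w)\bl K^{(\bl \alpha)}_{\bl w}$ by Equation \eqref{bar}, one gets $T_g \bl K^{(\bl \alpha)}_{\bl w} = P_{\bl \alpha}(g_1 \bl K^{(\bl \alpha)}_{\bl w}) + \ov{g_2}(\bl w)\bl K^{(\bl \alpha)}_{\bl w} = (g_1 + \ov{g_2}(\bl w))\,\bl K^{(\bl \alpha)}_{\bl w}$. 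Then apply $T_f = T_{f_1} + T_{\ov{f_2}}$ again; the holomorphic part $T_{f_1}$ multiplies through, while $T_{\ov{f_2}}$ requires care because $\ov{f_2}\,g_1\,\bl K^{(\bl \alpha)}_{\bl w}$ is no longer of the simple form in \eqref{bar}. Pairing against $\bl K^{(\bl \alpha)}_{\bl z}$ and using the reproducing property, the ``good'' terms produce $f_1(\bl z)g_1(\bl z) + \ov{f_2}(\ov{\bl w})\,\ov{g_2}(\ov{\bl w}) + f_1(\bl z)\ov{g_2}(\ov{\bl w})$ times $\bl K^{(\bl \alpha)}(\bl z,\bl w)$, after dividing by $\bl K^{(\bl \alpha)}(\bl z,\bl w)$ (legitimate since the kernel never vanishes). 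The one remaining term is $\langle \ov{f_2}g_1 \bl K^{(\bl \alpha)}_{\bl w}, \bl K^{(\bl \alpha)}_{\bl z}\rangle / \bl K^{(\bl \alpha)}(\bl z,\bl w)$, which I would write as an integral over $\mb D^d$ against $\omega_{\bl \alpha}$.

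\medskip
\noindent\textbf{Step 2: Unwinding the right-hand side and matching.} Similarly $\langle T_h \bl K^{(\bl \alpha)}_{\bl w}, \bl K^{(\bl \alpha)}_{\bl z}\rangle = \langle h\,\bl K^{(\bl \alpha)}_{\bl w}, \bl K^{(\bl \alpha)}_{\bl z}\rangle = \int_{\mb D^d} h(\bl \eta)\,\bl K^{(\bl \alpha)}(\bl \eta,\bl w)\,\ov{\bl K^{(\bl \alpha)}(\bl \eta, \bl z)}\,\omega_{\bl \alpha}(\bl \eta)\,dV(\bl \eta)$. Dividing through by $\bl K^{(\bl \alpha)}(\bl z,\bl w) = \prod_i (1-z_i\ov{w_i})^{-\alpha_i-2}$ and substituting $\ov{\bl K^{(\bl \alpha)}(\bl \eta,\bl z)} = \prod_i(1-z_i\ov{\eta_i})^{-\alpha_i-2}$ and $\bl K^{(\bl \alpha)}(\bl \eta,\bl w) = \prod_i(1-\eta_i\ov{w_i})^{-\alpha_i-2}$, I obtain exactly the kernel $\prod_i \frac{(1-z_iw_i)^{\alpha_i+2}}{(1-z_i\ov{\eta_i})^{\alpha_i+2}(1-w_i\eta_i)^{\alpha_i+2}}$ appearing in (2)---here one treats $w_i$ as $\ov{w_i}$-conjugated appropriately; the statement is written with $\bl w$ playing the role of the conjugate variable, so I'd be careful to track that the pairing in $\bl w$ is anti-holomorphic, matching the appearance of $\ov{f_2}(\ov{\bl w})$. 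Combining Steps 1 and 2, the identity $T_fT_g=T_h$ holds iff the two integrands' contributions match for all $\bl z,\bl w$, which after moving the $\ov{f_2}g_1$ term to the right-hand side is precisely statement (2). This gives (1) $\Leftrightarrow$ (2).

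\medskip
\noindent\textbf{Step 3: From (2) to (3).} Setting $\bl w = \bl z$ in (2) immediately yields (3), since $\prod_i \frac{(1-|z_i|^2)^{\alpha_i+2}}{|1-z_i\ov{\eta_i}|^{2(\alpha_i+2)}}\,\omega_{\bl \alpha}(\bl \eta) = \prod_i (\alpha_i+1)\frac{(1-|z_i|^2)^{\alpha_i+2}(1-|\eta_i|^2)^{\alpha_i}}{|1-z_i\ov{\eta_i}|^{4+2\alpha_i}}$, which is the kernel of $B_{\bl \alpha}$. For the converse (3) $\Rightarrow$ (2), the key observation is that both sides of (2), for fixed $\bl z$, are holomorphic in $\bl w$ (the left side manifestly, the right side because the only $\bl w$-dependence sits in the holomorphic factors $(1-z_iw_i)^{\alpha_i+2}$ and $(1-w_i\eta_i)^{-\alpha_i-2}$, and one can expand in a power series in $\bl w$ with coefficients given by convergent integrals) --- more precisely, both sides are real-analytic, and I would argue that (2) is the ``polarization'' of (3): replacing $\ov{z_i}$ by an independent variable. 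Concretely, in (3) the dependence enters through $z_i$ and $\ov{z_i}$; both sides extend holomorphically when $\ov{z_i}$ is replaced by an independent $w_i$ in a neighborhood, and since they agree on the diagonal $w_i = \ov{z_i}$, they agree identically by the identity theorem for holomorphic functions of several variables. This reconstructs (2) from (3). \textbf{The main obstacle} I anticipate is bookkeeping the conjugation conventions---the statement mixes $f_1(\bl z)$, $\ov{f_2}(\ov{\bl w})$, and $g_1(\bl \eta)$ in a way that requires consistently deciding which arguments are holomorphic versus anti-holomorphic in the pairing, and getting the polarization/identity-theorem argument in Step 3 to be rigorous about the domain of holomorphic extension. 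The computations themselves are routine applications of the reproducing property and \eqref{bar}.
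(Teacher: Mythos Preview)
Your approach is essentially the same as the paper's: compute $T_fT_g\bl K^{(\bl\alpha)}_{\bl w}$ via the decomposition $f=f_1+\ov{f_2}$, $g=g_1+\ov{g_2}$ and Equation \eqref{bar}, compare with $T_h\bl K^{(\bl\alpha)}_{\bl w}$, divide by $\bl K^{(\bl\alpha)}(\bl z,\bl w)$ to obtain (2), specialize to the anti-diagonal to get (3), and polarize back using holomorphy. One small slip: in Step 3 the correct specialization is $\bl w=\ov{\bl z}$, not $\bl w=\bl z$; with $\bl w=\bl z$ the factor $(1-z_iw_i)^{\alpha_i+2}$ becomes $(1-z_i^2)^{\alpha_i+2}$ rather than $(1-|z_i|^2)^{\alpha_i+2}$, and $\ov{f_2}(\ov{\bl w})$ becomes $\ov{f_2}(\ov{\bl z})$ rather than $\ov{f_2}(\bl z)$ --- this is exactly the conjugation bookkeeping you flagged, and once corrected the argument goes through as written.
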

\begin{proof}
 For ${\bl w} \in \mb D^d,$ we get from Equation \eqref{bar} $$T_g\bl K^{(\bl \alpha)}_{\bl w} = P_{\bl \alpha}(g_1\bl K^{(\bl \alpha)}_{\bl w} + \ov{g_2}\bl K^{(\bl \alpha)}_{\bl w}) = g_1\bl K^{(\bl \alpha)}_{\bl w} + \ov{g_2}({\bl w}) \bl K^{(\bl \alpha)}_{\bl w}.$$ Then \Bea T_fT_g\bl K^{(\bl \alpha)}_{\bl w} &=& P_{\bl \alpha}((f_1+\ov{f_2})(g_1\bl K^{(\bl \alpha)}_{\bl w} + \ov{g_2}({\bl w}) \bl K^{(\bl \alpha)}_{\bl w}))\\&=&f_1g_1\bl K^{(\bl \alpha)}_{\bl w} + \ov{g_2}({\bl w})f_1\bl K^{(\bl \alpha)}_{\bl w} + \ov{f_2}({\bl w})\ov{g_2}({\bl w})\bl K^{(\bl \alpha)}_{\bl w} +P_{\bl \alpha}(\ov{f_2}g_1\bl K^{(\bl \alpha)}_{\bl w}).\Eea
 Moreover, $T_fT_g=T_h$ if and only if $T_fT_g\bl K^{(\bl \alpha)}_{\bl w}=T_h\bl K^{(\bl \alpha)}_{\bl w}$ for all ${\bl w} \in \mb D^d,$ equivalently, \Bea  f_1g_1\bl K^{(\bl \alpha)}_{\bl w} + \ov{g_2}({\bl w})f_1\bl K^{(\bl \alpha)}_{\bl w} + \ov{f_2}({\bl w})\ov{g_2}({\bl w})\bl K^{(\bl \alpha)}_{\bl w} +P_{\bl \alpha}(\ov{f_2}g_1\bl K^{(\bl \alpha)}_{\bl w}) &=& P_{\bl \alpha}(h\bl K^{(\bl \alpha)}_{\bl w}). \Eea
 That is, for ${\bl z} \in \mb D^d,$
 \Bea &&f_1({\bl z})g_1({\bl z}) + \ov{g_2}({\bl w})f_1({\bl z}) + \ov{f_2}({\bl w})\ov{g_2}({\bl w})\\ &=& \frac{1}{\bl K^{(\bl \alpha)}_{\bl w}({\bl z})}P_{\bl \alpha}(h\bl K^{(\bl \alpha)}_{\bl w} - \ov{f_2}g_1\bl K^{(\bl \alpha)}_{\bl w})({\bl z}),
  \Eea
  which is \emph{2.} with ${\bl w}$ replaced by $\ov{{\bl w}}.$ This proves that \emph{1.} and \emph{2.} are equivalent.
  
  Now replacing $\bl w$ with $\ov{\bl z},$ we get \Bea && f_1(\bl z)g_1(\bl z) + \ov{g_2}(\bl z)f_1(\bl z) + \ov{f_2}(\bl z)\ov{g_2}(\bl z) \\ &=&  \int_{\mb D^d} \prod_{i=1}^d\left((\alpha_i +1)\frac{(1-|z_i|^2)^{\alpha_i +2} (1-|\eta_i|^2)^{\alpha_i}}{|1-z_i\ov{\eta_i}|^{4+2\alpha_i}} \right) (h-\ov{f_2}g_1) (\bl \eta) dV(\bl \eta) \\ &=& B_{\bl \alpha}(h-\ov{f_2}g_1)(\bl z), \Eea
  which provides the expression in \emph{3.} Note that in \emph{2.} both sides are holomorphic on $\mb D^{d} \times \mb D^d.$ We assume \emph{3.} It holds on the subset $\{(\bl z,\bl w) \in \mb D^{d} \times \mb D^d : \bl z = \ov{\bl w} \}$ and thus holds on $\mb D^{d} \times \mathbb D^d.$ Thus, \emph{3.} implies \emph{2.}. This argument is analogous to \cite[p. 205]{MR1867348}.\end{proof}
\begin{proof}[Proof of Theorem \ref{expoly}]
For bounded pluriharmonic functions $f$ and $g$ on $\mb D^d$, we can write $f=f_1+\ov{f}_2$ and $g=g_1+\ov{g}_2$ for holomorphic functions $f_i, g_i,\,i=1,2$. For $\bl a \in \mb D^d,$ we note that $T_{f_1+\ov{f}_2}\bl K^{(\bl \alpha)}(\cdot, \bl a) = P_{\bl \alpha}((f_1+\ov{f}_2)\bl K^{(\bl \alpha)}(\cdot, \bl a)) = (f_1+\ov{f}_2(\bl a))\bl K^{(\bl \alpha)}(\cdot, \bl a),$ which implies, $$T_{f_1+\ov{f}_2}T_{g_1+\ov{g}_2}\bl K^{(\bl \alpha)}(\cdot, \bl a) = \big (f_1g_1 + f_1\ov{g}_2(\bl a) + \ov{f}_2(\bl a)\ov{g}_2(\bl a) \big )\bl K^{(\bl \alpha)}(\cdot, \bl a)$$ $$ +  P_{\bl \alpha}(g_1\ov{f}_2 \bl K^{(\bl \alpha)}(\cdot, \bl a)).$$ 
From Lemma \ref{pr}, we get, $$B_{\bl \alpha}(v) = f_1\ov{g_2}$$ for $$v= h-\ov{f_2}g_1- f_1g_1 - \ov{f_2}\ov{g_2},$$ since $B_{\bl \alpha}f_1g_1=f_1g_1$ and $B_{\bl \alpha}\ov{f_2}\ov{g_2}=\ov{f_2}\ov{g_2}$ $.$ For a fixed $w_0 \in \mb D,$ let $(w_0, \bl 0)$ be the point $(w_0,0, \cdots 0) \in \mathbb D^d.$ Then we have,
\Bea (B_{\bl \alpha}v)(w_0,\bl 0) &=& f_1(w_0,\bl 0)\ov{g_2}(w_0,\bl 0)\\&=& (\alpha_1+1)\int_{\mb D}  \frac{(1-|w_0|^2)^{\alpha_1 +2}}{|1-w_0\ov{w}|^{4+2\alpha_1}} u(w) (1-|w|^2)^{\alpha_1} dV_1(w)\\&=& B_{\alpha_1}u(w_0), \Eea where $$u(\eta) = \prod_{i=2}^d (\alpha_i +1) \int_{\mb D^{d-1}} v(\eta,\bl z) (1-|z_2|^2)^{\alpha_2}\cdots(1-|z_d|^2)^{\alpha_d} dV_{d-1}(\bl z)$$ for $\eta \in \mb D.$ A similar argument as in \cite[p. 304, Theorem 4]{MR2262785} shows that $u$ is in $L^1(\mb D, \omega_{\alpha_1}dV_1).$ Then using Theorem \ref{exone}, we conclude that either $\del_1f(w_0, \bl 0) = 0$ or $\del_1\ov{g}(w_0, \bl 0) =0.$ To complete the proof we argue as in \cite[p. 304, Theorem 4]{MR2262785}, using the automorphisms of $\mathbb D^d.$ For $\bl z = (z_1, z_2, \cdots z_d) \in \mathbb D^d,$ let $\varphi_{\bl z}$ be the automorphism given by
$$\varphi_{\bl z} (\bl w) = \prod_{i=1}^d~ \frac{z_j-w_j}{1-\ov{z_j} w_j}.$$ 
Then, for $f, g, h \in L^\infty(\mathbb D^d)$ we have $T_f T_g = T_h$ implies $$T_{f \circ \varphi_{\bl z}} T_{g \circ \varphi_{\bl z}} = T_{h \circ \varphi_{\bl z}}. $$ The above can be proved, first for the weighted Bergman space $\mathbb A^2_{\omega_\beta}(\mathbb D)$ on $\mb D,$ where $\beta$ is a non-negative integer
following the proof of Lemma 8 in \cite{MR1079815}, using the unitary operator $V_\psi f = f \circ \psi~ \psi^{\frac{\beta}{2}+1}$ on $L^2 (\mathbb D, \omega_\beta dV_1).$ It then readily extends to $\mathbb A^2_{\omega_\alpha}(\mathbb D^d).$ This finishes the proof.
\end{proof}

\subsection{Weighted Bergman spaces on the symmetrized polydisc}
Let $\alpha$ be a non-negative integer. For $\bl \alpha= (\alpha,\ldots,\alpha),$ the continuous function $\omega_{\bl \alpha}$ is $\mathfrak{S}_d$-invariant.
Consider the probability measure $$\omega_{\bl \alpha}(\bl z)dV(\bl z)=\big(\frac{\alpha+1}{\pi}\big)^d\Big(\prod_{i=1}^d(1-r_i^2)^{\alpha}r_idr_id\theta_i\Big)$$ on the polydisc $\mb D^d.$ Let $dV^{(\alpha)}_{\bl s}$ be the measure on the symmetrized polydisc $\mb G_d$  obtained by the change of variables formula  \cite[p. 106]{MR733691}:
\bea\label{cov}
\int_{\mb G_d}f dV^{(\alpha)}_{\bl s}=\frac{1}{d!}\int_{\mb D^d}(f\circ\bl s)\vert J_{\bl s}\vert^2\omega_{\bl \alpha}dV,
\eea
where $ J_{\bl s}(\bl z)$ is the complex jacobian of the symmetrization map $\bl s.$ The weighted Bergman space $ \mb{A}_{\omega_{\bl \alpha}} (\mb G_d),$ on the  symmetrized polydisc $\mb G_d$ is the subspace of $L^2(\mb G_d, dV^{(\alpha)}_{\bl s})$ consisting of holomorphic functions. 
The weighted Bergman space $\mb{A}_{\widetilde{\omega}_{\bl \alpha}} (\mb G_d)$ is isometrically isomorphic to $\mb P_{\rm anti}(\mb A^2_{\omega_{\bl \alpha}}(\mb D^d))$ for $\omega_{\bl \alpha}=\widetilde{\omega}_{\bl \alpha} \circ \bl s$ \cite[p. 2363]{MR3043017}.
 Then from Theorem \ref{main1}, we deduce that for $\widetilde{u} = u \circ \bl s,\widetilde{v}=v \circ \bl s$ and $\widetilde{q} = q \circ \bl s$ symmetric symbols in $L^\infty(\mb D^d),$ $T_{\widetilde{u}}T_{\widetilde{v}} = T_{\widetilde{q}}$ on $\mb A^2_{\omega_{\bl \alpha}}(\mb D^d)$ if and only if $T_uT_v = T_q$ on  $\mb{A}_{\widetilde{\omega}_{\bl \alpha}} (\mb G_d).$ Then the proof of Theorem \ref{forsympoly} is straightforward from Theorem \ref{expoly}.

\section*{Conflict of interest} The authors declare there is no conflict of interest.


\bibliographystyle{siam}
\bibliography{Bibliography.bib}

\end{document}